\begin{document}


\newtheorem{thm}{Theorem}[section]
\newtheorem{prop}[thm]{Proposition}
\newtheorem{lem}[thm]{Lemma}
\newtheorem{dfn}[thm]{Definition}
\newtheorem{cor}[thm]{Corollary}

\theoremstyle{remark}

\newtheorem{ques}[thm]{Question}
\newtheorem{rmk}[thm]{Remark}


\renewcommand{\to}{\longrightarrow}

\renewcommand{\d}{\mathrm{d}}
\newcommand{\rme}{\mathrm{e}}
\renewcommand{\a}{\alpha}
\renewcommand{\b}{\beta}
\newcommand{\eps}{\varepsilon}
\newcommand{\g}{\gamma}
\renewcommand{\k}{\kappa}
\newcommand{\s}{\sigma}
\newcommand{\rmH}{\mathrm{H}}
\newcommand{\rmh}{\mathrm{h}}

\newcommand{\A}{\mathcal{A}}
\newcommand{\B}{\mathcal{B}}
\newcommand{\C}{\mathcal{C}}

\newcommand{\G}{\Gamma}
\renewcommand{\O}{\Omega}

\renewcommand{\phi}{\varphi}

\newcommand{\bbZ}{\mathbf{Z}}
\newcommand{\bbN}{\mathbf{N}}
\newcommand{\bbR}{\mathbf{R}}

\newcommand{\R}{\mathcal{R}}

\renewcommand{\cal}[1]{\mathcal{#1}}
\renewcommand{\rm}[1]{\mathrm{#1}}
\renewcommand{\bf}[1]{\mathbf{#1}}
\renewcommand{\t}[1]{\widetilde{#1}}
\newcommand{\ol}[1]{\overline{#1}}
\renewcommand{\hat}[1]{\widehat{#1}}

\newcommand{\fin}{\hspace{\stretch{1}}$\lhd$\\ \vspace{10pt}}
\newcommand{\actson}{\curvearrowright}
\newcommand{\dom}{\mathrm{dom}\,}
\newcommand{\img}{\mathrm{img}\,}
\newcommand{\id}{\mathrm{id}}
\renewcommand{\Vert}{\mathrm{Vert}}
\newcommand{\comp}{\mathrm{comp}}

\title{\large{\textbf{BEHAVIOUR OF ENTROPY UNDER BOUNDED AND INTEGRABLE ORBIT EQUIVALENCE}}}


\author{\textsc{Tim Austin}}



\date{}

\maketitle

\begin{abstract}
Let $G$ and $H$ be infinite finitely generated amenable groups. This paper studies two notions of equivalence between actions of such groups on standard Borel probability spaces.  They are defined as stable orbit equivalences in which the associated cocycles satisfy certain tail bounds.  In `integrable stable orbit equivalence', the length in $H$ of the cocycle-image of an element of $G$ must have finite integral over its domain (a subset of the $G$-system), and similarly for the reverse cocycle.  In `bounded stable orbit equivalence', these functions must be essentially bounded in terms of the length in $G$.  `Integrable' stable orbit equivalence arises naturally in the study of integrable measure equivalence of groups themselves, as introduced recently by Bader, Furman and Sauer.

The main result is a formula relating the Kolmogorov--Sinai entropies of two actions which are equivalent in one of these ways. Under either of these tail assumptions, the entropies stand in a proportion given by the compression constant of the stable orbit equivalence.  In particular, in the case of full orbit equivalence subject to such a tail bound, entropy is an invariant.  This contrasts with the case of unrestricted orbit equivalence, under which all free ergodic actions of countable amenable groups are equivalent.  The proof uses an entropy-bound based on graphings for orbit equivalence relations, and in particular on a new notion of cost which is weighted by the word lengths of group elements.
\end{abstract}

\setcounter{tocdepth}{1}

\tableofcontents

\section{Introduction}

Let $G$ and $H$ be finitely generated discrete groups, and let $T:G\actson (X,\mu)$ and $S:H\actson (Y,\nu)$ be free ergodic actions on standard Borel probability spaces.  A triple such as $(X,\mu,T)$ is called a \textbf{$G$-system}, and similarly for $H$. Let $|\cdot|_G$ and $|\cdot|_H$ be length functions on the two groups given by some choice of finite symmetric generating sets, and let $d_G$ and $d_H$ be the associated right-invariant word metrics.  The generating sets may be written as the unit balls $B_G(e_G,1)$ and $B_H(e_H,1)$ in these metrics.

Recall that a \textbf{stable orbit equivalence} (or \textbf{SOE}) between $(X,\mu,T)$ and $(Y,\nu,S)$ consists of (i) measurable subsets $U\subseteq X$ and $V\subseteq Y$ of positive measure, and (ii) a bi-measurable bijection $\Phi:U\to V$ which satisfies
\[\frac{\mu(\Phi^{-1}A)}{\mu(U)} = \frac{\nu(A)}{\nu(V)} \quad \hbox{for all measurable}\ A \subseteq V\]
and
\[\Phi(T^G(x)\cap U) = S^H(\Phi(x))\cap V \quad \hbox{for $\mu$-a.e.}\ x.\]
If $\mu(U) = \nu(V) = 1$, then $\Phi$ is simply an \textbf{orbit equivalence}, and the systems are said to be \textbf{orbit equivalent}. We often indicate a stable orbit equivalence by $\Phi:(X,\mu,T)\rightarrowtail (Y,\nu,S)$.

A stable orbit equivalence can be described in terms of a pair of maps which convert the $G$-action on the domain to the $H$-action on the target and vice-versa.  For this purpose we make the following definition. An \textbf{$H$-valued partial cocycle over $(X,\mu,T)$} is a pair $(\a,U)$ in which $U\subseteq X$ is measurable and
\[\a:\{(g,x) \in G\times X:\ x \in U\cap T^{g^{-1}}U\}\to H\]
is a measurable function which satisfies the cocycle identity:
\[\a(gk,x) = \a(g,T^kx)\a(k,x) \quad \hbox{whenever}\ g,k \in G\ \hbox{and}\ x \in U\cap T^{k^{-1}}U\cap T^{(gk)^{-1}}U.\]

If $\Phi:(X,\mu,T)\rightarrowtail (Y,\nu,S)$ is a stable orbit equivalence, and $U$ and $V$ are respectively the domain and image of $\Phi$, then $\Phi$ may be described in terms of an $H$-valued partial cocycle $(\a,U)$ over $(X,\mu,T)$ and a $G$-valued partial cocycle $(\b,V)$ over $(Y,\nu,S)$.  They are defined by requiring that
\[\Phi(T^gx) = S^{\a(g,x)}(\Phi(x)) \quad \hbox{whenever}\ x \in U\cap T^{g^{-1}}U\]
and
\[\Phi^{-1}(S^hy) = T^{\b(h,y)}(\Phi^{-1}(y)) \quad \hbox{whenever}\ y \in V\cap S^{h^{-1}}V.\]
These equations specify the cocycles uniquely because the actions are free.  Comparing these equations gives the following relations of inversion between $\a$ and $\b$:
\begin{equation}\label{eq:inversion}
\b(\a(g,x),\Phi(x)) = g \quad \hbox{and} \quad \a(\b(h,y),\Phi^{-1}(y)) = h.
\end{equation}

The category of probability-preserving actions and orbit equivalences has a long history in ergodic theory.  If $G$ and $H$ are amenable then the resulting equivalence relation on systems turns out to be trivial: all free ergodic actions of countable amenable groups are orbit equivalent.  This is the Connes--Feldman--Weiss generalization of Dye's theorem: see~\cite{Dye59,Dye63,ConFelWei81}.  On the other hand, if $G$ is amenable and $(X,\mu,T)$ is a free ergodic $G$-action, then a free ergodic action of another group $H$ can be orbit equivalent to $(X,\mu,T)$ only if $H$ is also amenable: see, for instance,~\cite[Section 4.3]{Zim84}. Among actions of non-amenable groups the relation of orbit equivalence is more complicated.

The generalization to stable orbit equivalence has become important because of its relationship with measure equivalence of groups.  For any countable groups $G$ and $H$, a \textbf{measure coupling} of $G$ and $H$ is a $\sigma$-finite standard Borel measure space $(\O,m)$ together with commuting $m$-preserving actions $G,H \actson \O$ which both have finite-measure fundamental domains.  If a measure coupling exists then $G$ and $H$ are \textbf{measure equivalent}.  This notion was introduced by Gromov in~\cite[Subsection 0.5.E]{Gro93} as a measure-theoretic analog of quasi-isometry.

If $(\O,m)$ is a measure coupling of $G$ and $H$, then one can use fundamental domains for the $G$- and $H$-actions to produce finite-measure-preserving systems for $G$ and $H$ that are stably orbit equivalent.  On the other hand, given a stable orbit equivalence between a $G$-system and an $H$-system, they can be reconstructed into a measure coupling of the groups: see~\cite[Theorem 3.3]{Furman99}, where Furman gives the credit for this result to Gromov and Zimmer.  On account of this correspondence, one can also describe a measure coupling in terms of cocycles over those finite-measure-preserving systems.  This time one obtains cocycles over the whole systems, not just partial cocycles.  In general, it is fairly easy (though not canonical) to extend a partial cocycle to a whole system (this is well-known, but see Proposition~\ref{prop:cocyc-extend} below for a careful proof).

By the aforementioned result of Zimmer, if $G$ is amenable then $H$ can be measure equivalent to $G$ only if $H$ is also amenable.  On the other hand, any amenable group does have actions which are free and ergodic, such as the non-trivial Bernoulli shifts, so the theorem of Connes, Feldman and Weiss shows that any two amenable groups are measure equivalent.

\subsection{Integrability conditions and invariance of entropy}

Recent work of Bader, Furman and Sauer~\cite{BadFurSau13} has introduced a refinement of measure equivalence called `integrable measure equivalence'.  It is obtained by imposing an integrability condition on the cocycles $\a$ and $\b$ that appear in the description of a measure coupling.  Their original results are for hyperbolic groups, but recently this notion has also been studied for amenable groups.  It seems to be a significantly finer relation than measure equivalence.  The growth type of the groups is an invariant, and among groups of polynomial growth the bi-Lipschitz type of the asymptotic cone is an invariant: both of these results are proved in~\cite{Aus--nilpIME}.

The present paper studies stable orbit equivalences which are subject to similar conditions on the integrability or boundedness of their cocycles.  It may be seen as an ergodic theoretic counterpart to the study of integrable measure equivalence, or as a continuation of the study of `restricted orbit equivalences' within ergodic theory.

Because stable orbit equivalences are described in terms of \emph{partial} cocycles, we must be a little careful in the choice of integrability condition to impose.  This paper focuses on two alternatives.  Let $\Phi:(X,\mu,T)\rightarrowtail (Y,\nu,S)$ be an SOE and let $(\a,U)$ and $(\b,U)$ be the partial cocycles which describe it.
\begin{itemize}
 \item We say that $\Phi$ is a \textbf{bounded stable orbit equivalence}, or \textbf{SOE$_\infty$}, if there is a finite constant $C$ such that
\begin{multline*}
|\a(g,x)|_H \leq C|g|_G \quad \hbox{for $\mu$-a.e.}\ x \in U\cap T^{g^{-1}}U\\
\hbox{and} \quad |\b(h,y)|_G \leq C|h|_H \quad \hbox{for $\nu$-a.e.}\ y \in V\cap S^{h^{-1}}V
\end{multline*}
for all $g \in G$ and $h \in H$ (regarding this condition as vacuous if $U\cap T^{g^{-1}}U$ or $V\cap S^{h^{-1}}V$ has measure zero).
\item We say that $\Phi$ is an \textbf{integrable semi-stable orbit equivalence}, or \textbf{SSOE$_1$}, if $(\a,U)$ may be extended to a full cocycle $\s:G\times X\to H$ which satisfies the integrability condition
\[\int_X |\s(g,x)|_H\,\mu(dx) < \infty \quad \forall g \in G,\]
and similarly for $(\b,V)$.  Beware that the extensions of $(\a,U)$ and $(\b,V)$ are not required to satisfy any extended version of~(\ref{eq:inversion}) beyond their original domains.
\end{itemize}

We use the term `semi-stable' for the second possibility because it requires that $\a$ have an extension to all of $G\times X$ which is integrable; it depends on more than just the values taken by $\a$ itself.  We would call $\Phi$ an \textbf{integrable stable orbit equivalence} or \textbf{SOE$_1$} if we required only that
\[\int_{U\cap T^{g^{-1}}U}|\a(g,x)|_H\,\mu(dx) < \infty \quad \forall g \in G.\]
This is formally weaker than both SSOE$_1$ and SOE$_\infty$.  The main result of this paper, Theorem A below, concerns SOE$_\infty$ and SSOE$_1$, but I do not know whether it holds also for SOE$_1$.

We write OE$_\infty$ and OE$_1$ for the special cases of the above notions when $\dom\Phi$ and $\img\Phi$ both have full measure.

In the setting of single probability-preserving transformations, a classical result of Belinskaya~\cite{Bel68} asserts that two transformations $S$ and $T$ are integrably orbit equivalent if and only if $S$ is isomorphic to either $T$ or $T^{-1}$.  Later, several works  studied other notions of `restricted' orbit equivalence for probability-preserving transformations, motived by Kakutani equivalence and Feldman's introduction of loose Bernoullicity: see for instance~\cite{OrnRudWei82} and~\cite{Rud85}. Many of those ideas have been generalized to actions of $\bbZ^d$ for $d\geq 2$ and then to more general amenable groups, culminating in the very abstract formulation of Kammeyer and Rudolph in~\cite{KamRud97,KamRud02}.  For $\bbZ^d$-actions with $d\geq 2$, Fieldsteel and Friedman~\cite{FieFri86} have shown that several natural properties are not invariant under integrable, or even bounded, OE, including discrete spectrum, mixing, and the K property.  However, entropy is an invariant.  Indeed, it is fairly easy to show that OE$_1$ for $\bbZ^d$-systems implies Kakutani equivalence in the sense developed in~\cite{Kat77,delJRud84} (see Section~\ref{sec:Ab} below), and those works include the result that entropy is invariant under Kakutani equivalence.

The present work extends this last conclusion to SOE$_\infty$ and SSOE$_1$ and to general discrete amenable groups.

\vspace{7pt}

\noindent\textbf{Theorem A}\quad \emph{Suppose that $G$ and $H$ are amenable, that $(X,\mu,T)$ and $(Y,\nu,S)$ are as above, and that $\Phi:U\to V$ is either a SOE$_\infty$ or a SSOE$_1$.  Then
\[\mu(U)^{-1}\rmh(\mu,T) = \nu(V)^{-1}\rmh(\nu,S).\]}

\vspace{7pt}

\begin{rmk}
It suffices to assume that only one of $G$ and $H$ is amenable, since the existence of the stable orbit equivalence then implies that the other is too. \fin
\end{rmk}

%
%
%
%

\subsection{Derandomization of orbit equivalences}

We prove Theorem A in two parts.

The first part handles the case of the Euclidean lattices $\bbZ^d$.  In this case, our various notions of stable orbit equivalence turn out to imply Kakutani equivalence, one of the more classical notions of restricted orbit equivalence.

\vspace{7pt}

\noindent\textbf{Theorem B}\quad \emph{Suppose that $(X,\mu,T)$ is a $\bbZ^d$-system and $(Y,\nu,S)$ is a $\bbZ^D$-system.  If they are SOE$_\infty$ then they are SSOE$_1$, and if they are SSOE$_1$ then $d=D$ and they are Kakutani equivalent.}

\vspace{7pt}

This will be proved in Section~\ref{sec:Ab}. Theorem A follows for these groups because it is known how entropy transforms under Kakutani equivalences of $\bbZ^d$-systems~\cite{delJRud84}.

Moreover, a fairly standard construction (see Section~\ref{sec:finite-ind}) allows one to pass between groups and their finite-index subgroups, and so from Theorem B we can deduce Theorem A for all finitely generated, virtually Abelian groups.

In the second part of the proof, all remaining cases are deduced from a result that we call `OE derandomization'.  It asserts that if a SOE between two systems is a SSOE$_1$ or SOE$_\infty$, then it is lifted from a SOE between two factor systems having arbitrarily low entropy (that is, having `arbitrarily little randomness').  Curiously, this result seems to require \emph{super}-linear growth of the acting groups, and so it cannot be used to prove Theorem A for virtually cyclic groups.  Thus we need both approaches to prove Theorem A in general.

A simpler version of derandomization can be observed among arbitrary integrable cocycles from a system to a group.  We state (and later prove) this result first, as motivation for the orbit-equivalence result that we need.

\vspace{7pt}

\noindent\textbf{Theorem C (Cocycle derandomization)}\quad \emph{Let $G$ be amenable and have super-linear growth, let $(X,\mu,T)$ be a $G$-system, and let $\s:G\times X\to H$ be an integrable cocycle over $T$.  For any $\eps > 0$ there is a cocycle $\tau$ cohomologous to $\s$ over $T$ such that the factor of $(X,\mu,T)$ generated by $\tau$ has entropy less than $\eps$.}

\vspace{7pt}

For general cocycles $G\times X\to H$, not necessarily arising from an SOE, boundedness and integrability are defined in Subsection~\ref{subs:syscoc}.  For cocycles which are not partial, boundedness implies integrability, so Theorem C applies in particular to all bounded cocycles.

Now suppose that $\Phi:(X,\mu,T)\rightarrowtail (Y,\nu,S)$ is an SOE.  If $U\subseteq \dom\Phi$ is measurable and has positive measure, then the restriction $\Phi|U$ still defines an SOE, different from $\Phi$ in that the domain and image have been made smaller.  The \textbf{restriction} of $\Phi$ to $U$ is always understood as a SOE in this way.

\vspace{7pt}

\noindent\textbf{Theorem D (Orbit-equivalence derandomization})\quad \emph{Let $G$ be amenable and have super-linear growth, and let $\Phi:(X,\mu,T)\rightarrowtail (Y,\nu,S)$ be an SOE which is either a SSOE$_1$ or a SOE$_\infty$.  Let $\eps > 0$.  Then there are
\begin{itemize}
\item a measurable subset $U \subseteq \dom\Phi$ with $\mu(U) > 0$,
\item factor maps $\pi:(X,\mu,T)\to (X',\mu',T')$ and $\xi:(Y,\nu,S)\to (Y',\nu',S')$ whose target systems are still free,
\item and a SOE $\Phi':(X',\mu',T') \rightarrowtail (Y',\nu',S')$
\end{itemize}
such that
\begin{enumerate}
\item $\rmh(\mu',T') < \eps$ and
\item the following diagram commutes:
\begin{center}
$\phantom{i}$\xymatrix{
(X,\mu,T) \ar@{>->}^{\Phi|U}[r]\ar_\pi[d] &(Y,\nu,S) \ar^\xi[d]\\
(X',\mu',T') \ar@{>->}_{\Phi'}[r] & (Y',\nu',S').
}
\end{center}
\end{enumerate}}

\vspace{7pt}

%
%
%
%
%
%
%
%

For groups of super-linear growth, Theorem A is deduced from Theorem D in Section~\ref{sec:AfromC}.  Then Sections~\ref{sec:graphings} and~\ref{sec:low-cost} develop some more technical results, before Theorems C and D are proved in Section~\ref{sec:BandC}.  Those technical results include a new notion of cost for a graphing on a Borel orbit equivalence relation which takes into account the word lengths of different group elements, and may be of independent interest.  It appears in Definition~\ref{dfn:cost}.

\subsubsection*{Acknowledgements}

This paper emerged from an ongoing collaboration with Uri Bader, Lewis Bowen, Alex Furman and Roman Sauer.  I am also grateful to Oded Regev, Damien Gaboriau and Brandon Seward for some useful references.  Finally, I thank the anonymous reviewer for suggestions which clarified various technical steps in the proofs.

\section{Background from ergodic theory}

\subsection{Systems and partial cocycles}\label{subs:syscoc}

All measure spaces in this paper are standard Borel and $\s$-finite.  Most are probability spaces.  Measure spaces are denoted by pairs such as $(X,\mu)$; the $\s$-algebra of this space will be denoted by $\B_X$ when it is needed.

An \textbf{observable} on a measure space $(X,\mu)$ is a measurable function $\phi$ from $X$ to a countable set, and a \textbf{partial observable} on $(X,\mu)$ is a pair $(\phi,U)$ consisting of a measurable subset $U \subseteq X$ and a measurable function $\phi$ from $U$ to a countable set.  

A \textbf{$G$-system} is a triple $(X,\mu,T)$ consisting of a standard Borel probability space $(X,\mu)$ and a $\mu$-preserving measurable action $T$ of $G$ on that space.  It is \textbf{free} if the orbit-map $g\mapsto T^gx$ is injective for $\mu$-a.e. $x$.  The Borel orbit equivalence relation of this action is denoted by $\R_T$. We assume standard definitions and results about orbit equivalence and cocycles over such systems: see, for instance,~\cite[Section 4.2]{Zim84}.

Conventions seem a little less settled in relation to stable orbit equivalence, and I do not know of a canonical reference.  It appears most often in connection with measure equivalence of groups, such as in~\cite{Furman99,Furman11,Gab02,Gab05,Shalom04}.  The present paper uses slightly different conventions, since our interest is in the systems and not just the groups.  But I have followed~\cite[Section 2]{Furman99} where possible.

If $H$ is another discrete group, then an \textbf{$H$-valued partial cocycle} over $(X,\mu,T)$ is a pair $(\a,U)$ in which $U \subseteq X$ is measurable and
\[\a:\{(g,x):\ x \in U\cap T^{g^{-1}}U\}\to H\]
is a measurable function satisfying the cocycle identity
\[\a(gk,x) = \a(g,T^kx)\a(k,x) \quad \hbox{whenever}\ g,k \in G\ \hbox{and}\ x \in U\cap T^{k^{-1}}U\cap T^{(gk)^{-1}}U.\]
This reduces to the usual notion of a cocycle if $U = X$. We sometimes write $\a^g$ for the function
\[\a(g,\cdot):U\cap T^{g^{-1}}U\to H,\]
and if $x \in U$ then we write $\a_x$ for the function
\[\a(\cdot,x):\{g\in G:\ T^gx \in U\}\to H.\]
A partial cocycle $(\a,U)$ is \textbf{non-trivial} if $\mu(U) > 0$. Two partial cocycles are considered equal if their sets are equal modulo $\mu$ and their functions agree $\mu$-a.e.

If $(\a,U)$ is a partial cocycle over $(X,\mu,T)$ and $V \subseteq U$ is measurable, then the \textbf{restriction} of $(\a,U)$ to $V$ is the partial cocycle $(\a_{|V},V)$ where $\a_{|V}$ is the restriction of the map $\a$ to the set $\{(g,x):\ x \in V\cap T^{g^{-1}}V\}$.  To lighten notation we sometimes write this restriction as $(\a,V)$.

A cocycle $\a$ or partial cocycle $(\a,U)$ is \textbf{bounded} if there is a finite constant $C$ such that
\[|\a(g,x)|_H \leq C|g|_G \quad \hbox{for $\mu$-a.e.}\ x \in U\cap T^{g^{-1}}U,\ \hbox{for all}\ g \in G.\]
A cocycle $\a$ (not partial) is \textbf{integrable} if
\[\int |\a(g,x)|_H\,\mu(dx) < \infty \quad \forall g \in G.\]
Clearly a bounded cocycle is integrable. These usages are consistent with the definitions of SOE$_\infty$ and SSOE$_1$ in the Introduction.

If $\a:G\times X\to H$ is a cocycle, then it is bounded if and only if each of the finitely many functions
\[|\a(s,\cdot)|_H, \quad s \in B_G(e_G,1),\]
is essentially bounded on $X$.  The forward implication here is immediate, and the reverse follows by writing a general element of $G$ as $g = s_\ell\cdots s_1$ with $\ell = |g|_G$ and $s_1,\dots,s_\ell \in B_G(e_G,1)$, and then using the cocycle identity
\begin{equation}\label{eq:cocyc-prod}
\a(g,x) = \a(s_\ell,T^{s_{\ell-1}\cdots s_1}x)\cdots \a(s_1,x).
\end{equation}
However, we cannot argue this way for a partial cocycle $(\a,U)$, since the factors on the right-hand side of~(\ref{eq:cocyc-prod}) may not all be defined for arbitrary $x \in U\cap T^{g^{-1}}U$.  This is why we use the definition of boundedness given above.

\subsection{Entropy}

Let $(X,\mu)$ be a probability space.  If $\mu$ is atomic, then its Shannon entropy is
\[\rmH(\mu):= -\sum_{x \in X}\mu\{x\}\log \mu\{x\},\]
with the usual interpretation $0\log 0 := 0$.

If $\phi:X\to A$ is an observable, then its Shannon entropy is
\[\rmH_\mu(\phi) := \rmH(\phi_\ast\mu).\]
If $U \subseteq X$ is measurable, then its Shannon entropy is defined to be that of the indicator function $1_U$: more explicitly,
\[\rmH_\mu(U) := -\mu(U)\log \mu(U) - \mu(X\setminus U)\log \mu(X\setminus U).\]

If $(X,\mu)$ is a probability space and $(\phi,U)$ is a partial observable on it, then the Shannon entropy of $(\phi,U)$ is defined to be
\[\rmH_\mu(\phi;U) := \rmH_\mu(U) + \mu(U)\cdot \rmH_{\mu_{|U}}(\phi),\]
where $\mu_{|U}$ is the measure $\mu$ conditioned on $U$: that is,
\[\mu_{|U}(V) := \mu(V\cap U)/\mu(U).\]
If $\mu(U) = 0$, then we set $\rmH_\mu(\phi;U) = 0$ by convention.  If $(\phi,U)$ is a partial observable and $V\subseteq U$ is measurable, then we abbreviate $\rmH_\mu(\phi|V;V)$ to just $\rmH_\mu(\phi;V)$.

Observe that, if $(\phi,U)$ is a partial observable and $\ast$ is an abstract point outside the range of $\phi$, then we can define a new observable $\phi^\ast$ by
\[\phi^\ast(x) = \left\{\begin{array}{ll}\phi(x) &\quad \hbox{if}\ x \in U\\ \ast & \quad \hbox{if}\ x \in X\setminus U,\end{array}\right.\]
and we obtain $\rmH_\mu(\phi;U) = \rmH_\mu(\phi^\ast)$.

Now let $G$ be a discrete amenable group with a F\o lner sequence $(F_n)_{n\geq 1}$, and let $(X,\mu,T)$ be a $G$-system.  If $\phi:X\to A$ is an observable and $F \subseteq G$ is finite, let
\[\phi^F:= (\phi\circ T^g)_{g \in F}:X\to A^F.\]
The factor generated by $\phi$ is the $\s$-algebra of subsets of $X$ generated by the level-sets of $\phi$ and all their images under $T^g$, $g \in G$.  If $(\phi,U)$ is a partial observable, then the factor it generates is defined to be the factor generated by $\phi^\ast$, the new observable constructed above.

As is standard, the \textbf{Kolmogorov--Sinai} (`\textbf{KS}') \textbf{entropy} of the system $(X,\mu,T)$ and observable $\phi$ is
\[\rmh(\mu,T,\phi) := \lim_{n\to\infty}\frac{1}{|F_n|}\rmH_\mu(\phi^{F_n}).\]
This may be calculated using any F\o lner sequence for $G$.  Then the KS entropy of $(X,\mu,T)$ is the supremum of $\rmh(\mu,T,\phi)$ over all observables $\phi$.  It is denoted by $\rmh(\mu,T)$.  By the Kolmogorov--Sinai theorem, the quantity $\rmh(\mu,T,\phi)$ is always equal to the KS entropy of the factor of $(X,\mu,T)$ generated by $\phi$.

The subadditivity of Shannon entropy has the immediate consequence
\[\rmh(\mu,T,\phi) \leq \rmH_\mu(\phi).\]
We extend this to a partial observable $(\phi,U)$ by defining $\rmh(\mu,T,(\phi,U))$ to be the KS entropy of the factor generated by $(\phi,U)$.  By writing this in terms of the extended observable $\phi^\ast$, we immediately obtain also
\begin{equation}\label{eq:Shannon-bound}
\rmh(\mu,T,(\phi,U)) \leq \rmH_\mu(\phi;U).
\end{equation}

The following useful estimate may be well-known, but I have not found a reference.  It was shown to me by Alex Furman.

\begin{lem}\label{lem:Fur}
Let $|\cdot|_G$ be a length function on $G$ corresponding to a finite symmetric generating set.  For every $\eps > 0$ there exists $C_\eps < \infty$ such that the following holds. If $p_g$ is a value in $[0,1]$ for every $g \in G\setminus \{e_G\}$, then
\[\sum_{g \neq e_G}[-p_g\log p_g - (1-p_g)\log(1-p_g)\big] \leq C_\eps\sum_{g \neq e_G}|g|_Gp_g + \eps.\]
In particular, if $(X,\mu)$ is a probability space and $\a:X\to G\setminus\{e_G\}$ is an observable, then
\[\rmH_\mu(\a) \leq C_\eps\int |\a(x)|_G\,\mu(dx) + \eps\]
\end{lem}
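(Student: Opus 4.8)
The plan is to prove the summatory inequality first and then deduce the integral version by a straightforward discretisation argument. The key observation is that the binary entropy function $\eta(p) := -p\log p - (1-p)\log(1-p)$ satisfies $\eta(p) \leq c p\log(1/p)$ for some absolute constant near $p=0$, and more usefully that $\eta(p)/p \to \infty$ only logarithmically as $p\to 0$. So I would split the sum over $g\neq e_G$ according to the size of $p_g$: for the `large' terms where $p_g$ is bounded below (say $p_g \geq \delta$ for a threshold $\delta$ depending on $\eps$), there can be at most finitely many of them with appreciable total weight, and in any case $\eta(p_g) \leq \log 2 \leq (\log 2)\delta^{-1} p_g \leq C_\eps' |g|_G p_g$ using $|g|_G \geq 1$; summing the genuinely tiny tail of large terms that are still $< \log 2$ contributes to the additive $\eps$. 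The real content is the `small' terms.

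For the small terms, the point is to exploit the growth of $G$. Fix $\eps>0$. Since the number of group elements of length $\leq R$ grows at most exponentially — say $|B_G(e_G,R)| \leq e^{aR}$ for some constant $a = a(G)$ depending only on the generating set — I would use the bound $\eta(p) \leq p(\log(1/p) + 1)$ (valid for all $p\in[0,1]$) and then, for each fixed radius shell $S_R := \{g : |g|_G = R\}$, compare $\sum_{g\in S_R}\eta(p_g)$ against $\sum_{g\in S_R}R\,p_g$. When $p_g$ is so small that $\log(1/p_g) \geq C_\eps R$, the term $\eta(p_g)$ is already dominated by $C_\eps R p_g$ directly. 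When instead $\log(1/p_g) < C_\eps R$, i.e. $p_g > e^{-C_\eps R}$, I throw these terms into the error: their number in shell $S_R$ is at most $|S_R| \leq e^{aR}$, and each such $\eta(p_g) \leq \eta(e^{-C_\eps R})$-ish is at most of order $C_\eps R e^{-C_\eps R}$, so the total over the shell is $\lesssim e^{aR}\cdot C_\eps R e^{-C_\eps R}$, which is summable over $R\geq 1$ and can be made $<\eps$ by choosing $C_\eps$ large enough relative to $a$. Combining the two regimes shell by shell and summing over $R$ gives $\sum_{g\neq e_G}\eta(p_g) \leq C_\eps \sum_{g\neq e_G}|g|_G p_g + \eps$, as desired; note $C_\eps$ depends only on $\eps$ and the fixed constant $a$.

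For the `in particular' clause, apply the first part with $p_g := \mu(\a^{-1}(g))$ for $g\neq e_G$. Then $\rmH_\mu(\a) = \sum_{g\neq e_G}\bigl(-p_g\log p_g\bigr) \leq \sum_{g\neq e_G}\eta(p_g)$ since $\a$ avoids $e_G$ and each $-(1-p_g)\log(1-p_g)\geq 0$, while $\sum_{g\neq e_G}|g|_G p_g = \int_X |\a(x)|_G\,\mu(dx)$ by expanding the integral over the level sets of $\a$. Substituting into the summatory inequality gives the stated bound.

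The main obstacle I anticipate is getting the dependence of the constants right: one needs $C_\eps$ to depend only on $\eps$ (and the generating set, through the exponential growth rate $a$), not on the particular $(p_g)$, which forces the shell-by-shell argument to be uniform and the choice of the `throw into the error' threshold $e^{-C_\eps R}$ to be coupled correctly to $a$. The elementary estimates $\eta(p)\leq p(\log(1/p)+1)$ and $|B_G(e_G,R)| \leq e^{aR}$ are routine, but balancing the split so that both the dominated part and the error part come out cleanly — in particular ensuring the error series $\sum_R e^{aR} R e^{-C_\eps R}$ converges and is small — is where the care lies.
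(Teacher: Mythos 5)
Your strategy is viable and genuinely different from the paper's, but as written the shell-by-shell step has its two regimes interchanged, and in the form you state them both case-bounds are false. Writing $\eta(p):=-p\log p-(1-p)\log(1-p)\le p(\log(1/p)+1)$, the terms in the shell $S_R$ that are dominated by the main term are those with $p_g$ \emph{not too small}, namely $\log(1/p_g)\le C_\eps R$, since then $\eta(p_g)\le p_g(\log(1/p_g)+1)\le (C_\eps+1)R\,p_g$. You instead assign these to the error, claiming each such $\eta(p_g)\le\eta(\rme^{-C_\eps R})$ up to constants; but $\eta$ is increasing on $[0,1/2]$, so for $p_g>\rme^{-C_\eps R}$ this fails: a shell could contain up to $\rme^{aR}$ elements with $p_g$ comparable to your fixed threshold $\delta$, each contributing $\eta(p_g)\approx\eta(\delta)$ of constant size, so the claimed per-shell error bound $\rme^{aR}C_\eps R\,\rme^{-C_\eps R}$ is simply not true and the correct total for such terms must be charged to the main term, not to the additive $\eps$. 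Symmetrically, for the terms you claim are ``dominated directly,'' i.e.\ those with $\log(1/p_g)\ge C_\eps R$, the inequality goes the wrong way: there $-p_g\log p_g = p_g\log(1/p_g)\ge C_\eps R\,p_g$, so $\eta(p_g)$ is at \emph{least} $C_\eps R\,p_g$. The repair is just to swap the cases: terms with $p_g\ge\rme^{-C_\eps R}$ go to the main term as above, while terms with $p_g<\rme^{-C_\eps R}$ go to the error, where monotonicity of $\eta$ on $[0,1/2]$ gives $\eta(p_g)\le\eta(\rme^{-C_\eps R})\le(C_\eps R+1)\rme^{-C_\eps R}$, the count $|S_R|\le\rme^{aR}$ applies, and your series $\sum_{R\ge 1}\rme^{aR}(C_\eps R+1)\rme^{-C_\eps R}$ is small once $C_\eps$ is large compared with $a$. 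With that swap, together with your (correct) treatment of the terms $p_g\ge\delta$ and of the ``in particular'' clause, the argument closes, with $C_\eps$ depending only on $\eps$ and the growth constant as required.

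For comparison, the paper proceeds differently: it first reduces $\eta(p_g)$ to $-p_g\log p_g$ (Markov's inequality handles the $g$ with $p_g\ge 1/2$), then aggregates each sphere through $q_n:=\sum_{|g|_G=n}p_g$, bounding the within-sphere contribution by $q_n\log|B_G(e_G,n)|-q_n\log q_n$ and finishing with the calculus inequality $-t\log t\le mt+\rme^{-m-1}$ applied to $t=q_n$, $m=kn$. Your corrected term-by-term threshold argument is an equally legitimate route; the paper's aggregation avoids any case analysis on individual $p_g$ at the cost of introducing the normalized within-sphere distributions.
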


\begin{proof}
First, Markov's Inequality gives
\[|\{g\in G\setminus \{e_G\}:\, p_g \geq 1/2\}| \leq 2\sum_{g \neq e_G}p_g.\]
On the other hand, if $p_g \leq 1/2$ then
\[-(1-p_g)\log(1-p_g) \leq -p_g\log p_g.\]
We may therefore bound the desired sum as follows:
\begin{eqnarray*}
&& \sum_{g \neq e_G}[-p_g\log p_g - (1-p_g)\log(1-p_g)\big]\\ && \leq 2\sum_{g \neq e_G}(-p_g\log p_g) + \log 2\cdot |\{g\in G\setminus \{e_G\}:\, p_g \geq 1/2\}|\\
&&\leq 2\sum_{g \neq e_G}(-p_g\log p_g) + 2\log 2\sum_{g\neq e_G}|g|_G p_g.
\end{eqnarray*}
It therefore suffices to prove that $\sum_{g\neq e_G} (-p_g\log p_g)$ may be bounded in terms of $\sum_{g\neq e_G} |g|_Gp_g$ in the desired way.

Next, since $G$ is finitely generated, there is a finite constant $c$ such that
\[|B_G(e_G,n)| \leq c^n \quad \forall n\geq 0.\]
For each $n\geq 1$, let
\[q_n := \sum_{|g|_G = n}p_g.\]
Let $\mu_g := p_g/q_{|g|_G}$ for all $g \in G$, interpreting this as $0$ if $q_{|g|_G} = 0$. Provided ${q_n \neq 0}$, the tuple $(\mu_g)_{|g|_G = n}$ is a probability distribution on the $|\cdot|_G$-sphere ${\{|g|_G = n\}}$.  From this fact we derive the estimate
\begin{eqnarray*}
\sum_{|g|_G = n}(-p_g\log p_g) &=& \sum_{|g|_G = n}(-(\mu_gq_n)\log (\mu_gq_n))\\
 &=& q_n\rmH\big((\mu_g)_{|g|_G = n}\big) - q_n\log q_n\\
 &\leq& q_n\log|B_G(e_G,n)| - q_n\log q_n\\
 &\leq& cnq_n - q_n\log q_n.
\end{eqnarray*}
whenever $q_n \neq 0$.

Finally, some elementary calculus gives
\[ - t \log t \leq mt + \rme^{-m-1} \quad \hbox{for any}\ t,m>0.\]
Let $k > 0$ be large and fixed, and for each $n\geq 1$ apply this bound with $t := q_n$ and $m := kn$.  It gives
\[\sum_{n\geq 1}(-q_n\log q_n) \leq k\sum_{n\geq 1}nq_n + \sum_{n\geq 1}\rme^{-kn-1}.\]
Combining this with the previous estimate, we obtain
\[\sum_{g\neq e_G}(-p_g\log p_g) = \sum_{n\geq 1}\sum_{|g|_G = n}(-p_g\log p_G) \leq (c + k)\sum_{n\geq 1}nq_n + \sum_{n\geq 1}\rme^{-kn-1}.\]
By choosing $k$ large enough we may make the last term here less than $\eps$, so this completes the proof of the first inequality.

We obtain the second part of the lemma by applying that first inequality to the values $p_g := \mu\{\a = g\}$.
\end{proof}

\begin{cor}\label{cor:cocyc-part-ent}
Suppose that $G$ and $H$ are finitely generated groups, that $(X,\mu,T)$ is a $G$-system, and that $\a:G\times X\to H$ is an integrable cocycle over $T$.  For every $g \in G$ and every $\eps > 0$ there exists $\delta > 0$ for which the following holds: for any measurable $U \subseteq X$,
\[\hbox{if}\quad \mu(U) < \delta\quad \hbox{then}\quad \rmH_\mu(\a^g;U) < \eps.\]
\end{cor}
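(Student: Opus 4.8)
The plan is to deduce this directly from Lemma~\ref{lem:Fur} together with the absolute continuity of the Lebesgue integral. Discarding the trivial case $\mu(U)=0$, abbreviate $\psi := \a^g = \a(g,\cdot)$; since $\a$ is a genuine (not partial) cocycle, $\psi$ is an observable defined on all of $X$. By the definition of the Shannon entropy of a partial observable,
\[\rmH_\mu(\a^g;U) = \rmH_\mu(U) + \mu(U)\cdot\rmH_{\mu_{|U}}(\psi_{|U}).\]
The summand $\rmH_\mu(U)$ tends to $0$ as $\mu(U)\to 0$, so the whole task is to control the product $\mu(U)\cdot\rmH_{\mu_{|U}}(\psi_{|U})$.

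Next I would strip off the atom of $\psi$ at $e_H$, since Lemma~\ref{lem:Fur} is stated only for observables that avoid the group identity. Put $W := U\cap\{\psi\neq e_H\}$ (if $\mu(W)=0$ then $\psi=e_H$ a.e. on $U$ and the assembled bound below is immediate, so assume $\mu(W)>0$). On $U\setminus W$ the observable $\psi$ is constant, so the elementary grouping identity for Shannon entropy gives
\[\rmH_{\mu_{|U}}(\psi_{|U}) = \rmH_{\mu_{|U}}(W) + \mu_{|U}(W)\cdot\rmH_{\mu_{|W}}(\psi_{|W}) \leq \log 2 + \mu_{|U}(W)\cdot\rmH_{\mu_{|W}}(\psi_{|W}),\]
using that $(\mu_{|U})_{|W} = \mu_{|W}$. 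Multiplying by $\mu(U)$ and using $\mu(U)\mu_{|U}(W) = \mu(W)$,
\[\mu(U)\cdot\rmH_{\mu_{|U}}(\psi_{|U}) \leq \mu(U)\log 2 + \mu(W)\cdot\rmH_{\mu_{|W}}(\psi_{|W}).\]

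Now $\psi_{|W}$ takes values in $H\setminus\{e_H\}$, so for any $\eps'>0$ the second part of Lemma~\ref{lem:Fur}, applied on the probability space $(W,\mu_{|W})$, gives
\[\mu(W)\cdot\rmH_{\mu_{|W}}(\psi_{|W}) \leq \mu(W)\Big(C_{\eps'}\!\int_W |\psi(x)|_H\,\mu_{|W}(dx) + \eps'\Big) \leq C_{\eps'}\!\int_U |\a(g,x)|_H\,\mu(dx) + \eps',\]
where I have used $\mu(W)\int_W|\psi(x)|_H\,\mu_{|W}(dx) = \int_W|\psi(x)|_H\,\mu(dx)$, that $|\psi|_H$ vanishes on $U\setminus W$ (there $\psi=e_H$), and $\mu(W)\leq 1$. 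Assembling the displays yields
\[\rmH_\mu(\a^g;U) \leq \rmH_\mu(U) + \mu(U)\log 2 + C_{\eps'}\!\int_U|\a(g,x)|_H\,\mu(dx) + \eps'.\]

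It remains to choose the parameters, and this is where the hypotheses enter. Fix $\eps' := \eps/3$, which fixes $C_{\eps'}$. Because $\a$ is integrable, $x\mapsto|\a(g,x)|_H$ lies in $L^1(\mu)$, so by absolute continuity of the integral there is $\delta_1>0$ such that $\mu(U)<\delta_1$ forces $C_{\eps'}\int_U|\a(g,x)|_H\,\mu(dx) < \eps/3$; and since $\rmH_\mu(U) + \mu(U)\log 2\to 0$ as $\mu(U)\to 0$, there is $\delta_2>0$ with $\rmH_\mu(U)+\mu(U)\log 2 < \eps/3$ whenever $\mu(U)<\delta_2$. Then $\delta := \min\{\delta_1,\delta_2\}$ works. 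I do not expect a serious obstacle: the points needing a little care are the separate treatment of the value $e_H$ (forced on us by the form of Lemma~\ref{lem:Fur}) and the bookkeeping of the normalising constants when passing between $\mu$, $\mu_{|U}$ and $\mu_{|W}$.
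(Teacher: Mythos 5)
Your proof is correct and follows essentially the same route as the paper's: decompose $\rmH_\mu(\a^g;U)$ as $\rmH_\mu(U) + \mu(U)\,\rmH_{\mu_{|U}}(\a^g)$, bound the conditional entropy by Lemma~\ref{lem:Fur} applied to the conditioned measure, and invoke absolute continuity of $\int|\a(g,\cdot)|_H\,d\mu$, which is available because $\a$ is integrable. The only difference is cosmetic: you split off the atom at $e_H$ explicitly via the grouping identity (costing a harmless $\log 2$) and fix the lemma's parameter at $\eps/3$, whereas the paper applies the lemma directly with its parameter set to $1$ and then chooses $\eta < \eps/3C$.
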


\begin{proof}
First, any sufficiently small $\delta$ satisfies
\[\mu(U) < \delta \quad \Longrightarrow \quad \rmH_\mu(U) < \eps/3.\]

On the other hand, since $\a^g$ is integrable, for any $\eta > 0$ there is a $\delta > 0$ such that
\[\mu(U) < \delta \quad \Longrightarrow \quad \int_U |\a(g,x)|_H\,\mu(d x) < \eta.\]
By a special case of Lemma~\ref{lem:Fur}, we may choose $C < \infty$ so that this turns into
\[\rmH_{\mu_{|U}}(\a^g) \leq C\eta/\mu(U) + 1,\]
where $C$ does not depend on the value of $\eta$.

Combining these estimates gives
\[\mu(U) < \delta \quad \Longrightarrow \quad \rmH_\mu(\a^g;U) < \eps/3 + C\eta + \mu(U) < \eps/3 + C\eta + \delta.\]
Choosing $\eta < \eps/3C$ and then ensuring that $\delta < \eps/3$, this completes the proof.
\end{proof}

\section{Some preliminaries on stable orbit equivalence and cocycles}

\subsection{Generated factors}

Given a partial cocycle $(\a,U)$ over $(X,\mu,T)$, the factor that it \textbf{generates} is the smallest factor which contains $U$ and with respect to which all of the partial observables
\[(\a^g,U\cap T^{g^{-1}}U)\]
are measurable.  More explicitly, it is generated by the sets
\[U_{g,h} := \{x \in U\cap T^{g^{-1}}U:\ \a(g,x) = h\}\]
for $g \in G$ and $h \in H$, together with all their $T$-images.  Notice that if we fix $g$ and let $h$ vary over $H$ then the sets $U_{g,h}$ constitute a measurable partition of $U\cap T^{g^{-1}}U$.

%
%
%
%

Now suppose that $\Phi:(X,\mu,T) \rightarrowtail (Y,\nu,S)$ is an SOE, let $U:= \dom\Phi$ and let $V:= \img\Phi$.  The \textbf{compression} of $\Phi$ is the constant
\[\comp(\Phi) := \frac{\nu(V)}{\mu(U)}.\]
Let $(\a,U)$ be the partial cocycle associated to $\Phi$, and $(\b,V)$ that associated to $\Phi^{-1}$.  Let $U_{g,h}$ be the sets defined above for the partial cocycle $(\a,U)$, and let $V_{h,g}$ be their counterparts for $(\b,V)$.  For any $g \in G$ and $h \in H$, the relation between $\a$ and $\b$ implies that
\begin{align*}
&x\in U\cap T^{g^{-1}}U\ \hbox{and}\ \a(g,x) = h \\ &\Longleftrightarrow \quad x\in U\cap T^{g^{-1}}U\ \hbox{and}\ \Phi(T^gx) = S^h\Phi(x)\\
&\Longleftrightarrow \quad \Phi(x) \in V\cap S^{h^{-1}}V\ \hbox{and}\ \b(h,\Phi(x)) = g.
\end{align*}
Therefore $\Phi(U_{g,h}) = V_{h,g}$ for all $g$ and $h$, and
\begin{equation}\label{eq:Ugh}
\Phi(T^gA) = S^h\Phi(A) \quad \forall A\subseteq U_{g,h}.
\end{equation}

The next lemma is the first definite step in the direction of Theorem D.

\begin{lem}\label{lem:pre-C}
Let $\A$ be a factor of $(X,\mu,T)$ with respect to which $(\a,U)$ is measurable.  Then there are
\begin{itemize}
\item a factor map $\pi:(X,\mu,T)\to (X',\mu',T')$ which generates $\A$ modulo $\mu$,
 \item another factor map $\xi:(Y,\nu,S)\to (Y',\nu',S')$,
\item and a SOE $\Phi':(X',\mu',T')\to (Y',\nu',S')$
\end{itemize}
such that the diagram
\begin{center}
$\phantom{i}$\xymatrix{
(X,\mu,T) \ar@{>->}^{\Phi}[r]\ar_\pi[d] &(Y,\nu,S) \ar^\xi[d]\\
(X',\mu',T') \ar@{>->}_{\Phi'}[r] & (Y',\nu',S')
}
\end{center}
commutes in the following sense: $\dom \Phi = \pi^{-1}(\dom \Phi')$, and
\[\Phi'\circ (\pi|\dom \Phi) = \xi\circ \Phi\]
almost surely on this set.
\end{lem}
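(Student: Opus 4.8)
The plan is to take $\pi$ to be the quotient of $(X,\mu,T)$ by the factor $\A$, to manufacture a companion factor $\B$ of $(Y,\nu,S)$ by transporting $\A$ across $\Phi$, to take $\xi$ to be the quotient by $\B$, and then to show that $\Phi$ itself descends to the required $\Phi'$. Write $\A|U$ for the $\s$-algebra $\{A\in\A:\ A\subseteq U\}$ of subsets of $U$; since $(\a,U)$ is $\A$-measurable we have $U\in\A$, hence $T^gU\in\A$ for all $g$, and every $U_{g,h}\in\A|U$. I would define $\B$ to be the smallest $S$-invariant (and $\nu$-completed) $\s$-algebra on $Y$ containing $\Phi(\A|U):=\{\Phi(A):\ A\in\A|U\}$; equivalently $\B$ is generated by all sets $S^hW$ with $h\in H$ and $W\in\Phi(\A|U)$, so it is genuinely a factor. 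Let $\pi:(X,\mu,T)\to(X',\mu',T')$ and $\xi:(Y,\nu,S)\to(Y',\nu',S')$ be the associated factor maps onto the quotient systems (again standard Borel $G$- and $H$-systems). Because $U\in\A$ and $V=\Phi(U)\in\Phi(\A|U)\subseteq\B$, there are sets $U'\subseteq X'$ and $V'\subseteq Y'$ with $\pi^{-1}(U')=U$ and $\xi^{-1}(V')=V$ modulo null sets.

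The crux is to show that $\Phi$ carries $\A|U$ onto $\B|V:=\{B\in\B:\ B\subseteq V\}$ as a measure algebra, i.e. $\Phi(\A|U)=\B|V$ modulo null sets; the inclusion $\Phi(\A|U)\subseteq\B|V$ is immediate from the definition of $\B$. For the reverse inclusion, since $\Phi$ is a bi-measurable bijection $U\to V$ whose pushforward carries $\mu$ restricted to $U$ to a constant multiple of $\nu$ restricted to $V$, it is enough to verify that $\Phi^{-1}(B)\in\A|U$ for $B$ ranging over a family generating $\B|V$, and one such family is given by the traces $S^hW\cap V$ with $W\in\Phi(\A|U)$ and $h\in H$. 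Fixing $W=\Phi(A)$ with $A\in\A|U$ and $h\in H$, I would decompose $S^hW\cap V=\bigsqcup_{g\in G}S^h(W\cap V_{h,g})$ along the partition $\{V_{h,g}\}_{g\in G}$ of $V\cap S^{h^{-1}}V$, and then use the analogue of~(\ref{eq:Ugh}) with the two systems interchanged — namely $\Phi^{-1}(S^hB)=T^g\Phi^{-1}(B)$ whenever $B\subseteq V_{h,g}$ — together with $\Phi^{-1}(V_{h,g})=U_{g,h}$, to obtain
\[\Phi^{-1}(S^hW\cap V)=\bigsqcup_{g\in G}T^g\big(A\cap U_{g,h}\big).\]
Each $A\cap U_{g,h}$ lies in $\A|U\subseteq\A$; since $\A$ is $T$-invariant and $U_{g,h}\subseteq T^{g^{-1}}U$, each $T^g(A\cap U_{g,h})$ lies in $\A$ and is contained in $U$, so the countable union lies in $\A|U$, as needed. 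Running the same computation in the forward direction shows $V_{h,g}=\Phi(U_{g,h})\in\B|V$, so the partial cocycle $(\b,V)$ is automatically $\B$-measurable even though this was assumed only for $(\a,U)$.

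It then remains to extract $\Phi'$ and check the SOE axioms. By the crux claim, for every Borel $E'\subseteq Y'$ we have $(\xi\circ\Phi)^{-1}(E')\in\A|U=\sigma(\pi|U)$ modulo $\mu$, so the standard factorization of a sub-$\s$-algebra-measurable map through its generating map produces a measurable $\Phi':U'\to V'$ with $\Phi'\circ(\pi|U)=\xi\circ\Phi$ almost surely on $U$; together with $\dom\Phi=U=\pi^{-1}(U')=\pi^{-1}(\dom\Phi')$ this is already the asserted commutation. Applying the same factorization to $\pi\circ\Phi^{-1}:V\to X'$ yields a measurable inverse for $\Phi'$ modulo null sets, so $\Phi'$ is a bi-measurable bijection between the positive-measure sets $U'=\dom\Phi'$ and $V'=\img\Phi'$, with $\mu'(U')=\mu(U)$ and $\nu'(V')=\nu(V)$; pushing the SOE measure identity for $\Phi$ through the measure-preserving maps $\pi$ and $\xi$ gives the corresponding identity for $\Phi'$, and in particular $\comp(\Phi')=\comp(\Phi)$. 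Finally, $\A$-measurability of $(\a,U)$ and $\B$-measurability of $(\b,V)$ let the functions $\a(g,\cdot)$ and $\b(h,\cdot)$ descend through $\pi$ and $\xi$ to measurable functions assembling into a partial cocycle $(\a',U')$ over $(X',\mu',T')$ and a partial cocycle $(\b',V')$ over $(Y',\nu',S')$, the cocycle identities being inherited almost surely one group-pair at a time (here one uses that $G$ and $H$ are countable); applying $\xi$ to $\Phi(T^gx)=S^{\a(g,x)}\Phi(x)$ and $\pi$ to $\Phi^{-1}(S^hy)=T^{\b(h,y)}\Phi^{-1}(y)$, and using equivariance of $\pi$ and $\xi$, gives $\Phi'((T')^gx')=(S')^{\a'(g,x')}\Phi'(x')$ and $(\Phi')^{-1}((S')^hy')=(T')^{\b'(h,y')}(\Phi')^{-1}(y')$ almost surely, and these two relations together yield the orbit condition $\Phi'((T')^G(x')\cap U')=(S')^H(\Phi'(x'))\cap V'$ for a.e.\ $x'$, so $\Phi'$ is an SOE. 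I expect the main obstacle to be calibrating $\B$ correctly: it must be \emph{large} enough that $\Phi$-images of $\A$-sets inside $U$ are $\B$-measurable (so that $\xi\circ\Phi$ descends through $\pi$ and $\Phi'$ is onto $V'$) yet \emph{small} enough that $\Phi^{-1}$ returns $\B$-sets inside $V$ to $\A$ (so that $\xi\circ\Phi$ is genuinely $\A|U$-measurable and $\Phi'$ is injective); the already-noted identity $\Phi(U_{g,h})=V_{h,g}$ and the inversion relations~(\ref{eq:inversion}) are exactly what make that double requirement consistent, and once it holds the remaining cocycle- and orbit-bookkeeping is routine.
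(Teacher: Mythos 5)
Your proposal is correct and follows essentially the same route as the paper: you transport $\A|U$ across $\Phi$ to generate a factor of $(Y,\nu,S)$, prove the crux identity that its trace on $V$ is exactly $\Phi(\A|U)$ by decomposing the generating sets $S^hW\cap V$ along the partition $\{V_{h,g}\}$ and invoking~(\ref{eq:Ugh}), and then realize $\Phi'$ on the standard Borel quotients. Your closing verification of the SOE axioms for $\Phi'$ (descending the partial cocycles and checking the orbit condition) just spells out what the paper compresses into its final ``diagram-chase,'' so there is no substantive difference.
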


\begin{proof}
Let $\A_0 := \A\cap U$, let $\C_0 := \Phi[\A_0]$, and let $\C$ be the factor of $(Y,\nu,S)$ generated by $\C_0$.

We now show that $\C\cap V = \C_0$.  The inclusion $\supseteq$ is obvious.  For the reverse, let us show that $\C$ is generated as a $\s$-algebra by a family of sets whose intersections with $V$ are all members of $\C_0$.  In particular, let $A \in \A_0$, let $C := \Phi(A)$, and let $h \in H$: we will show that $D := S^{h^{-1}}C\cap V$ still lies in $\C_0$. Since $C \subseteq V$, we have $D \subseteq S^{h^{-1}}V \cap V$. This right-hand set is partitioned into the subsets $V_{h,g} = \Phi(U_{g,h})$, $g \in G$, and these are all members of $\Phi[\A_0] = \C_0$ by our assumption that $(\a,U)$ is $\A$-measurable.  Therefore
\begin{multline*}
D = \bigcup_{g \in G}D\cap V_{h,g} = \bigcup_{g \in G}S^{h^{-1}}C\cap V_{h,g} = \bigcup_{g \in G}S^{h^{-1}}\big(C\cap S^hV_{h,g}\big) \\= \bigcup_{g\in G}S^{h^{-1}}\big(\Phi(A \cap T^gU_{g,h})\big) = \bigcup_{g \in G}\Phi(T^{g^{-1}}A\cap U_{g,h}),
\end{multline*}
using~(\ref{eq:Ugh}) for the fourth and fifth equalities. This is explicitly a member of $\Phi[\A_0] = \C_0$, as required.

Next, since $(X,\mu)$ and $(Y,\nu)$ are standard Borel, we may choose factor maps $\pi:(X,\mu,T)\to (X',\mu',T')$ and $\xi:(Y,\nu,S)\to (Y',\nu',S')$ which generate $\A$ modulo $\mu$ and $\C$ modulo $\nu$, respectively.  Since $U \in \A_0 \subseteq \A$ and $V \in \C_0 \subseteq \C$, there are measurable subsets $U' \subseteq X'$ and $V' \subseteq Y'$ such that $U = \pi^{-1}U'$ and $V = \xi^{-1}V'$  modulo negligible sets.  Since $\A\cap U = \A_0$ and $\C\cap V = \C_0$, it follows that $\A_0 = \pi^{-1}[\B_{U'}]$ modulo $\mu$ and $\C_0 = \xi^{-1}[\B_{V'}]$ modulo $\nu$, respectively.  Therefore the set-mapping
\[\Phi[\cdot]:\A_0 \to \C_0\]
defines a measure-algebra equivalence from $\B_{U'}$ modulo $\mu'$ to $\B_{V'}$ modulo $\nu'$.  Since $U'$ and $V'$ are standard Borel, this arises from a measurable bijection $\Phi':U'\to V'$.  Now a simple diagram-chase shows that this fits into the desired commutative diagram.
\end{proof}

\subsection{Extensions of partial cocycles and of systems}

As remarked in the introduction, there is a close relationship between stable orbit equivalence of systems and measure equivalence of the acting groups:~\cite[Theorem 3.3]{Furman99}.  The main results of the present paper concern entropy, which is a property of the systems rather than the groups, so our point of view emphasizes the former.  However, some of the results we need are already known in the study of measure equivalence, including most of those in this subsection.

The first such result is a general procedure for extending a partial cocycle to a full cocycle.  This fact can easily be extracted from the proof of the equivalence between stable orbit equivalence and measure equivalence, but for completeness we include a proof purely in terms of cocycles.

\begin{prop}\label{prop:cocyc-extend}
If $(X,\mu,T)$ is an ergodic $G$-system and $(\a,U)$ is a non-trivial $H$-valued partial cocycle over it, then there is a cocycle $\s:G\times X\to H$ such that $\a = \s_{|U}$.

If $\tau:G\times X \to H$ is another cocycle satisfying $\a = \tau_{|U}$, then $\s$ and $\tau$ are cohomologous over $(X,\mu,T)$.
\end{prop}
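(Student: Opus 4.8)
The plan is to build the extension $\s$ by a standard "fundamental-domain / induction" argument, exploiting ergodicity of $T$ to travel from any point of $X$ into $U$. First I would observe that, since $(\a,U)$ is non-trivial and $T$ is ergodic, the saturation $\bigcup_{g\in G}T^gU$ is $G$-invariant and of positive measure, hence co-null; so for $\mu$-a.e.\ $x$ there exists $g$ with $T^gx\in U$. Using a measurable selection (e.g.\ a well-ordering of $G$ or an enumeration $g_1,g_2,\dots$ and taking the first index that works), I would fix a measurable map $x\mapsto c(x)\in G$ with $T^{c(x)}x\in U$ for a.e.\ $x$, normalised so that $c(x)=e_G$ whenever $x\in U$ itself. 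Intuitively $c$ records a "retraction path" from $x$ into $U$. Then for $g\in G$ and a.e.\ $x$ both $T^{c(x)}x$ and $T^{c(T^gx)}(T^gx)=T^{c(T^gx)g}x$ lie in $U$, and they lie in the same $T$-orbit, so I can try to set
\[
\s(g,x) := \a\big(c(T^gx)\,g\,c(x)^{-1},\ T^{c(x)}x\big),
\]
which is well-defined provided $T^{c(x)}x\in U\cap T^{-(c(T^gx)g c(x)^{-1})}U$ — and indeed $T^{(c(T^gx)gc(x)^{-1})}(T^{c(x)}x)=T^{c(T^gx)g}x = T^{c(T^gx)}(T^gx)\in U$, so this holds a.e.

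Next I would verify the cocycle identity for $\s$. Writing $x'=T^{c(x)}x\in U$, $y=T^gx$, $z=T^{kg}x$ with the analogous points $y'=T^{c(y)}y$, $z'=T^{c(z)}z$ all in $U$ and all on one $T$-orbit, the elements $c(y)gc(x)^{-1}$, $c(z)kc(y)^{-1}$, $c(z)(kg)c(x)^{-1}$ move $x'$ to $y'$, $y'$ to $z'$, $x'$ to $z'$ respectively, and $(c(z)kc(y)^{-1})(c(y)gc(x)^{-1}) = c(z)(kg)c(x)^{-1}$; so the cocycle identity for $\a$ on $U$ (applied with all three relevant base-points in $U$, which holds a.e.) gives the cocycle identity for $\s$. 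Restricting to $x\in U$, where $c(x)=e_G$ and also $c(T^gx)=e_G$ when $T^gx\in U$, one gets $\s(g,x)=\a(g,x)$ whenever $x\in U\cap T^{g^{-1}}U$, i.e.\ $\s_{|U}=\a$. I would take care with the a.e.\ caveats: the cocycle identity for $\s$ is an identity among countably many functions of $x$ (indexed by $g,k$), so a single co-null set works.

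For the second assertion, suppose $\tau$ is another cocycle with $\tau_{|U}=\a$. On $U$ the cocycles $\s$ and $\tau$ agree, and a standard fact is that two cocycles over an ergodic system that agree on a set of positive measure (in the sense of having the same restricted partial cocycle there) are cohomologous — the transfer function is recovered by $\phi(x):=\tau(c(x),x)\cdot\s(c(x),x)^{-1}$, i.e.\ by comparing how each cocycle transports $x$ into $U$; a short computation using both cocycle identities and $\s_{|U}=\tau_{|U}$ shows $\tau(g,x)=\phi(T^gx)\s(g,x)\phi(x)^{-1}$ for a.e.\ $(g,x)$. I expect the main obstacle to be purely bookkeeping: making the measurable choice of $c$ genuinely measurable and keeping straight the (countably many) a.e.\ conditions under which each instance of the cocycle identity for $\a$ is legitimately invoked, since $\a$ is only a \emph{partial} cocycle and the identity $\a(gk,\cdot)=\a(g,T^k\cdot)\a(k,\cdot)$ requires all three base-points to lie in $U$. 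None of these steps is deep, but the partiality of $\a$ means one cannot be cavalier about domains.
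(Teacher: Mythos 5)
Your proposal is correct and is essentially the paper's own proof: your selection $c$ is exactly the paper's $U$-return map $\g$, your formula $\s(g,x)=\a\big(c(T^gx)\,g\,c(x)^{-1},T^{c(x)}x\big)$ together with the checks of well-definedness, the cocycle identity (all three base-points landing in $U$), and the restriction property coincide with the paper's argument, and the second assertion is handled by the same transfer-function computation. One cosmetic slip: since $\s(c(x),x)=e_H$, your $\phi(x)=\tau(c(x),x)$ actually satisfies $\s(g,x)=\phi(T^gx)\,\tau(g,x)\,\phi(x)^{-1}$, i.e.\ the identity you wrote holds with $\s$ and $\tau$ interchanged (equivalently, use $\phi^{-1}$ as the transfer function, as the paper does with $\eta(x)=\tau(\g(x),x)^{-1}$), which does not affect the conclusion that the cocycles are cohomologous.
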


\begin{proof}
\emph{Part 1.}\quad Let us enumerate $G = \{g_1=e_G,g_2,g_3,\ldots\}$.  Since $\mu(U) > 0$ and the system is ergodic, we have
\[\mu\Big(\bigcup_{g\in G}T^{g^{-1}}U\Big) = 1:\]
that is, $U$ meets almost every $T$-orbit. Therefore for a.e. $x \in X$ there is a minimal $n \in \bbN$ such that $T^{g_n}(x) \in U$.  This choice of $g_n$ defines a measurable function $\g:X\to G$ such that $T^{\g(x)}(x) \in U$ for a.e. $x$.  We call it the \textbf{$U$-return map}.

We now define $\s(g,x)$ by
\begin{itemize}
 \item moving both $x$ and $T^gx$ into the set $U$ using the $U$-return map, and then
\item taking the value of $\a$ that connects those two new points.
\end{itemize}
To be precise, this means that
\[\s(g,x) := \a\big(\g(T^gx)g\g(x)^{-1},T^{\g(x)} (x)\big).\]
To see that this is well-defined, observe that the definition of $\g$ gives $T^{\g(x)}(x) \in U$ and also
\[T^{\g(T^gx)g\g(x)^{-1}}\big(T^{\g(x)}(x)\big) = T^{\g(T^gx)}(T^gx) \in U,\]
and so
\[T^{\g(x)}(x) \in U\cap T^{(\g(T^gx)g\g(x)^{-1})^{-1}}U = \dom\big(\a\big(\g(T^gx)g\g(x)^{-1},\cdot\big)\big).\]

A simple check using the cocycle equation for $\a$ shows that the new map $\s$ also satisfies the cocycle equation:
\begin{align*}
\s(gk,x) &= \a\big(\g(T^{gk}x)gk\g(x)^{-1},T^{\g(x)} (x)\big)\\
&= \a\big(\g(T^{gk}x)g\g(T^kx)^{-1}\cdot \g(T^kx)k\g(x)^{-1},T^{\g(x)} (x)\big)\\
&= \a\big(\g(T^{gk}x)g\g(T^kx)^{-1},T^{\g(T^kx)} (T^kx)\big)\cdot \a\big(\g(T^kx)k\g(x)^{-1},T^{\g(x)} (x)\big)\\
&= \s(g,T^kx)\s(k,x).
\end{align*}

Lastly, $\s$ extends $\a$, because if $x \in U\cap T^{g^{-1}}U$ then $\g(x) = \g(T^gx) = e_G$ (recalling that we put $e_G$ first in our enumeration of $G$), and so
\[\s(g,x) = \a\big(\g(T^{g}x)g\g(x)^{-1},T^{\g(x)} (x)\big) = \a(g,x).\]

\vspace{7pt}

\emph{Part 2.}\quad If $\tau$ is another cocycle for which $\tau_{|U} = \a$, then the cocycle equation for $\tau$ gives
\begin{align*}
\s(g,x) &= \a\big(\g(T^gx)g\g(x)^{-1},T^{\g(x)} (x)\big)\\
&= \tau\big(\g(T^gx)g\g(x)^{-1},T^{\g(x)} (x)\big)\\
&= \tau(\g(T^gx),T^gx)\cdot \tau(g,x)\cdot \tau(\g(x)^{-1},T^{\g(x)}(x))\\
&= \tau(\g(T^gx),T^gx)\cdot \tau(g,x)\cdot \tau(\g(x),x)^{-1}\\
&= \eta(T^g x)^{-1}\cdot \tau(g,x)\cdot \eta(x),
\end{align*}
where $\eta(x) := \tau(\g(x),x)^{-1}$ is a measurable function from $X$ to $H$.  So $\s$ is manifestly cohomologous to $\tau$.
\end{proof}

\begin{rmk}
If the partial cocycle $(\a,U)$ satisfies an assumption of boundedness or integrability, then Proposition~\ref{prop:cocyc-extend} gives no guarantee that its extension $\s$ satisfies the same assumption.  We must therefore by quite careful in how we apply this proposition to the study of SSOE$_1$ or SOE$_\infty$. In the case of SSOE$_1$, an integrable extended cocycle is guaranteed by definition, but we sometimes need to perform some other manipulations on a cocycle first and then apply Proposition~\ref{prop:cocyc-extend}, so care is still necessary. \fin
\end{rmk}

Now suppose that $\Phi:(X,\mu,T) \rightarrowtail (Y,\nu,S)$ is a stable orbit equivalence from a free ergodic $G$-system to a free ergodic $H$-system. Let $(\a,U)$ and $(\b,V)$ be the partial cocycles that describe $\Phi$ and $\Phi^{-1}$. We now use Proposition~\ref{prop:cocyc-extend} to construct a kind of `common extension' of the two systems $(X,\mu,T)$ and $(Y,\nu,S)$.

This construction can be carried out starting from either $(X,\mu,T)$ or $(Y,\nu,S)$.  We begin by using the former.  First, apply Proposition~\ref{prop:cocyc-extend} to obtain a cocycle $\hat{\a}:G\times X\to H$ such that $\a = \hat{\a}_{|U}$. Now let $\hat{X} := X\times H$ and let $\hat{\mu}$ be the $\s$-finite measure on this space which is the product of $\mu$ and counting measure.  We define an infinite-measure-preserving action $\hat{T}$ of $G\times H$ on $(\hat{X},\hat{\mu})$ by setting
\[\hat{T}^{(g,h)}(x,k) := (T^gx,\hat{\a}(g,x) kh^{-1}) \quad \hbox{for}\ g \in G\ \hbox{and}\ h \in H.\]
The resulting system $(\hat{X},\hat{\mu},\hat{T})$ is ergodic.  Indeed, if $A \subseteq \hat{X}$ its invariant, then the action of $H$ on the vertical fibres of $\hat{X}$ forces $A$ to have been lifted from $X$, but an invariant set lifted from $X$ must be negligible or co-negligible because $T$ is ergodic.

Starting with $(Y,\nu,S)$, the analogous construction uses an extension $\hat{\b}$ of $\b$ to define a $(G\times H)$-action $\hat{S}$ on $\hat{Y} := Y\times G$ that preserves the product $\hat{\nu}$ of $\nu$ and counting measure.

\begin{lem}\label{lem:inf-meas-iso}
The infinite-measure-preserving $(G\times H)$-systems $(\hat{X},\hat{\mu},\hat{T})$ and $(\hat{Y},\hat{\nu},\hat{S})$ are isomorphic, up to changing the measures by a constant multiple.
\end{lem}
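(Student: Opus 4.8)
The plan is to produce an explicit measure-algebra isomorphism between the two infinite-measure systems by using the stable orbit equivalence $\Phi$ together with the cocycle data. The guiding intuition is that both $(\hat X,\hat\mu,\hat T)$ and $(\hat Y,\hat\nu,\hat S)$ are models for the same measure coupling of $G$ and $H$ built from $\Phi$: inside $\hat X = X\times H$ the subset $U\times\{e_H\}$ is a fundamental domain for the $H$-action, and inside $\hat Y = Y\times G$ the subset $V\times\{e_G\}$ is a fundamental domain for the $G$-action, and $\Phi$ identifies these two pieces. So I would first identify $\hat T|_{\{e_G\}\times H}$-orbits with vertical fibres and record that $U \times \{e_H\}$ meets $\hat\mu$-a.e. $H$-orbit in exactly one point (this uses ergodicity of $T$ plus that $U$ meets a.e. $T$-orbit, exactly as in Proposition \ref{prop:cocyc-extend}; more precisely, pushing $\hat{\a}$ around, the $H$-orbit of $(x,k)$ hits $U\times\{e_H\}$ at the point $(T^{\g(x)}x,\, e_H)$ after absorbing $\hat{\a}(\g(x),x)k$ into the $H$-coordinate — one should check this lands in $U\times\{e_H\}$ and is the unique such point on the orbit). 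The symmetric statement holds for $V\times\{e_G\}\subseteq\hat Y$.

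Next I would define the candidate isomorphism $\Psi:\hat X\to\hat Y$. Send a point of $\hat X$ first, by the $H$-action, to its representative $(u,e_H)$ with $u\in U$; apply $\Phi$ to get $(\Phi(u),e_G)\in V\times\{e_G\}\subseteq \hat Y$; then undo the original $H$-move, now interpreted in $\hat Y$ via the $(G\times H)$-action $\hat S$. Concretely, if $(x,k)\in\hat X$ has $\hat T$-representative $(u,e_H)$ reached by the element $(e_G,h)\in G\times H$, set $\Psi(x,k):=\hat S^{(e_G,h)^{-1}}(\Phi(u),e_G)$. One must check this is well-defined (the representative and the connecting group element are a.e. unique because the actions are free), measurable, and bijective — the inverse is built the same way starting from $\hat Y$ and using $\Phi^{-1}$, and the inversion relations \eqref{eq:inversion} between $\a$ and $\b$ are exactly what makes $\Psi^{-1}\circ\Psi = \id$. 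Then I would verify $\Psi\circ\hat T^{(g,h)} = \hat S^{(g,h)}\circ\Psi$: since $\Psi$ was defined by passing to fundamental-domain representatives and the $\hat T$-, $\hat S$-actions simply permute the coordinates, equivariance in the $H$-direction is automatic from the construction, and equivariance in the $G$-direction reduces, after moving both sides into the fundamental domain, to the defining property $\Phi(T^g u) = S^{\a(g,u)}\Phi(u)$ of the cocycle $\a$ together with the fact that $\hat{\a}$ extends $\a$. Finally, $\Psi$ sends $U\times\{e_H\}$ (a copy of $(U,\mu|_U)$) bijectively onto $V\times\{e_G\}$ (a copy of $(V,\nu|_V)$), and since $\Phi$ scales measure by the constant factor $\mu(U)/\nu(V)$ on these sets while both $\hat\mu$ and $\hat\nu$ are the respective base measures times counting measure on the group factor, $\Psi_*\hat\mu$ and $\hat\nu$ differ by the global constant $\nu(V)/\mu(U)$. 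That is the assertion "up to changing the measures by a constant multiple."

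The main obstacle I anticipate is the bookkeeping in showing $\Psi$ is genuinely well-defined and bijective — in particular, pinning down the unique fundamental-domain representative and the unique connecting group element on $\hat\mu$-a.e. orbit, and tracking how the $H$-coordinate in $\hat X$ gets converted (via $\hat\a$ and then $\hat\b$) into the $G$-coordinate in $\hat Y$ without ever invoking the cocycle relations outside the domains where they hold. The safe way to organize this is: first prove the fundamental-domain claims as a standalone step using only ergodicity and freeness; then define $\Psi$ on the fundamental domain by $\Phi$ and extend by equivariance (this makes equivariance true by construction and bijectivity a consequence of $\Phi$ being a bijection $U\to V$); and only afterwards check consistency — i.e. that the formula extending $\Phi$ by the $\hat T$-action is consistent across the different group elements mapping into the fundamental domain, which is precisely where the cocycle identity for $\hat\a$ and the relations \eqref{eq:inversion} enter. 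Boundedness or integrability of the cocycles play no role here, so neither hypothesis of Theorem A is needed for this lemma.
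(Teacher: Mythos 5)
Your overall plan---define the map on $\hat{U} := U\times\{e_H\}$ by transporting $\Phi$, extend by $(G\times H)$-equivariance, build the inverse from $\Phi^{-1}$, and read off the constant $\comp(\Phi)$ from the scaling on $\hat{U}$---is the paper's proof. But the justification you give for well-definedness rests on a false claim. The $H$-part of the action is $\hat{T}^{(e_G,h)}(x,k)=(x,kh^{-1})$, so $H$-orbits in $\hat{X}$ are the fibres $\{x\}\times H$; the fundamental domain for the $H$-action is $X\times\{e_H\}$, not $U\times\{e_H\}$, and an $H$-orbit meets $U\times\{e_H\}$ only when $x\in U$. In particular your concrete formula, with connecting element $(e_G,h)$, only defines $\Psi$ on $U\times H$, which is not $\hat{\mu}$-conull. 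Reaching $\hat{U}$ from a general $(x,k)$ requires a nontrivial $G$-component (as your parenthetical involving $\g(x)$ implicitly acknowledges), and then the relevant orbit is the full $(G\times H)$-orbit, which equals $T^Gx\times H$ and meets $\hat{U}$ in one point for \emph{every} element of $T^Gx\cap U$ --- infinitely many points almost surely, since $\mu(U)>0$ and the action is ergodic. So ``the representative and the connecting group element are a.e.\ unique because the actions are free'' is wrong: freeness pins down the group element only after the representative is fixed, and the representative is far from unique. Well-definedness of the equivariant extension therefore cannot be dispatched by a uniqueness argument.

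The correct repair is the one you only gesture at in your final paragraph, and it is exactly the paper's argument, so it is not optional bookkeeping but the heart of the lemma: define $\hat{\Phi}(x,k):=\hat{S}^{(g^{-1},h^{-1})}\big(\Phi_1(\hat{T}^{(g,h)}(x,k))\big)$ for \emph{any} $(g,h)$ with $\hat{T}^{(g,h)}(x,k)\in\hat{U}$, and prove independence of the choice: if $(g',h')$ also works, set $(g_1,h_1)=(g'g^{-1},h'h^{-1})$; the cocycle identity for $\hat{\a}$ forces $h_1=\a(g_1,T^gx)$, the inversion relation~(\ref{eq:inversion}) gives $g_1=\b(h_1,\Phi(T^gx))$, and these two facts show the two candidate values differ by $\hat{S}^{(g',h')^{-1}}\hat{S}^{(g_1,h_1)}=\hat{S}^{(g^{-1},h^{-1})}$ applied to the same point, i.e.\ they agree. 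With well-definedness in hand, equivariance and bijectivity do follow as you describe. One further small gap: the constant-multiple statement for the measures is immediate only on $\hat{U}$; to propagate it to all of $\hat{X}$ you need equivariance of $\hat{\Phi}$ together with ergodicity of the lifted $(G\times H)$-systems (as in the paper), not just the product form of $\hat{\mu}$ and $\hat{\nu}$. Your closing remark that neither integrability nor boundedness of the cocycles is used here is correct.
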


\begin{proof}
Define
\[\hat{U} := U\times \{e_H\} \subseteq \hat{X} \quad \hbox{and}\quad \hat{V} := V\times \{e_G\} \subseteq \hat{Y},\]
and let $\Phi_1:\hat{U}\to \hat{V}$ be the map that results from the obvious identification of $\hat{U}$ with $U$ and $\hat{V}$ with $V$.

The idea is that $\Phi_1$ should be a `part' of the required isomorphism, and now $(G\times H)$-equivariance tells us how to extend it.  Thus, for $(x,k) \in \hat{X}$, choose $(g,h) \in G\times H$ so that $\hat{T}^{(g,h)}(x,k) \in \hat{U}$ (we may do this almost surely by the ergodicity of $\hat{T}$), and let
\[\hat{\Phi}(x,k) := \hat{S}^{(g^{-1},h^{-1})}\big(\Phi_1(\hat{T}^{(g,h)}(x,k))\big).\]
We must check that this is well-defined.  Suppose that $(g',h') \in G\times H$ also satisfies $\hat{T}^{(g',h')}(x,k) \in \hat{U}$, and let $(g_1,h_1) := (g'g^{-1},h'h^{-1})$.  The assumption that $\hat{T}^{(g,h)}(x,k) \in \hat{U}$ is equivalent to $T^gx \in U$ and $\hat{\a}(g,x)kh^{-1} = e_G$, and similarly for $(g',h')$.  Combining these relations with the cocycle equation for $\hat{\a}$, we obtain
\[e_G = \hat{\a}(g',x)\cdot k\cdot (h')^{-1} = \a(g_1,T^gx)\hat{\a}(g,x)kh^{-1}h_1^{-1} = \a(g_1,T^gx)h_1^{-1}.\]
Hence $h_1 = \a(g_1,T^gx)$, and so also $g_1 = \b(h_1,\Phi(T^gx))$, by~(\ref{eq:inversion}).  From this we deduce that
\begin{align*}
\Phi_1(\hat{T}^{(g',h')}(x,k)) &= \big(\Phi(T^{g'}x),e_G\big) = \big(\Phi(T^{g_1}(T^gx)),e_G\big)\\ &= \big(S^{\a(g_1,T^gx)}(\Phi(T^gx)),e_G\big) = \big(S^{h_1}(\Phi(T^gx)),e_G\big)\\  &= \big(S^{h_1}(\Phi(T^gx)),\b(h_1,\Phi(T^gx))e_Gg_1^{-1}\big)\\ &= \hat{S}^{(g_1,h_1)}(\Phi_1(\hat{T}^{(g,h)}(x,k)).
\end{align*}
Therefore
\begin{multline*}
\hat{S}^{(g',h')^{-1}}\big(\Phi_1(\hat{T}^{(g',h')}(x,k))\big) = \hat{S}^{(g',h')^{-1}}\hat{S}^{(g_1,h_1)}(\Phi_1(\hat{T}^{(g,h)}(x,k))\\ = \hat{S}^{(g^{-1},h^{-1})}\big(\Phi_1(\hat{T}^{(g,h)}(x,k))\big),
\end{multline*}
showing that the definition of $\hat{\Phi}(x,k)$ does not depend on which valid choice we make of $(g,h)$.

Analogous reasoning shows that $\hat{\Phi}$ is equivariant between the two $(G\times H)$-actions.

Clearly $\hat{\Phi}|\hat{U} = \Phi_1$, and for subsets of $\hat{U}$ this amplifies the measure $\hat{\mu}$ by the fixed constant $\comp(\Phi_1)$.  Since $\hat{\Phi}$ is equivariant and both of the systems $(\hat{X},\hat{\mu},\hat{T})$ and $(\hat{Y},\hat{\nu},\hat{S})$ are ergodic, this fact extends to the whole of $\hat{\Phi}$.  This shows that $\hat{\Phi}$ has the desired properties.
\end{proof}

Behind Lemma~\ref{lem:inf-meas-iso} lies a more conceptual fact: $(\hat{X},\hat{\mu},\hat{T})$ and $(\hat{Y},\hat{\nu},\hat{S})$ can be identified with the measure coupling of $G$ and $H$ that arises from the given stable orbit equivalence, as in the proof of~\cite[Theorem 3.3]{Furman99}.  So far in this section we have not assumed that $\Phi$ is a SOE$_\infty$ or SSOE$_1$.  However, if $\Phi$ is a SSOE$_1$, then by definition we may choose the extended cocycles $\hat{\a}$ and $\hat{\b}$ to be integrable.  We therefore obtain the following integrable analog of~\cite[Theorem 3.3]{Furman99}.  This corollary is certainly already known to experts, but we record it explicitly for later reference.

\begin{cor}\label{cor:int-Furman}
If there exists a SSOE$_1$ from a $G$-system to an $H$-system, then $G$ and $H$ are integrably measure equivalent. \qed
\end{cor}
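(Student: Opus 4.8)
The plan is to unwind the definitions and observe that almost all of the work has already been done. By the definition of a SSOE$_1$, the partial cocycle $(\a,U)$ describing $\Phi$ admits an extension to a full cocycle $\hat\a:G\times X\to H$ satisfying $\int_X|\hat\a(g,x)|_H\,\mu(dx)<\infty$ for every $g\in G$, and likewise $(\b,V)$ admits an integrable full extension $\hat\b:H\times Y\to G$. First I would feed these particular choices of $\hat\a$ and $\hat\b$ into the construction preceding Lemma~\ref{lem:inf-meas-iso}: form $\hat X=X\times H$ with the product of $\mu$ and counting measure, and the action $\hat T^{(g,h)}(x,k)=(T^gx,\hat\a(g,x)kh^{-1})$ of $G\times H$, and symmetrically $\hat Y=Y\times G$ with action $\hat S$ built from $\hat\b$.

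Next I would invoke Lemma~\ref{lem:inf-meas-iso} to conclude that $(\hat X,\hat\mu,\hat T)$ and $(\hat Y,\hat\nu,\hat S)$ are isomorphic as infinite-measure-preserving $(G\times H)$-systems, up to a constant rescaling of the measure. As noted in the discussion following that lemma, this common system \emph{is} the measure coupling of $G$ and $H$ associated with $\Phi$: the $G$-action on $\hat X$ (that is, $\hat T^{(\cdot,e_H)}$) and the $H$-action commute, $U\times\{e_H\}$ is a fundamental domain of finite measure for the $H$-action, and $V\times\{e_G\}$ (pulled back through the isomorphism, or directly in $\hat Y$) is a finite-measure fundamental domain for the $G$-action. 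So $(\hat X,\hat\mu)$ with these two commuting actions is a genuine measure coupling of $G$ and $H$ in the sense recalled in the introduction.

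It then remains to check the integrability of this coupling, i.e. that the cocycles recording how the $G$-action permutes the $H$-fundamental-domain (and vice versa) are integrable over the fundamental domain. But these cocycles are exactly $\hat\a$ restricted suitably and $\hat\b$ restricted suitably: moving a point of $X\times\{e_H\}$ by $(g,e_H)$ lands it in $X\times\{\hat\a(g,x)^{-1}\}$, so returning to the fundamental domain requires the $H$-element $\hat\a(g,x)$, whose $|\cdot|_H$-length has finite integral over $X$ by construction; the reverse direction uses $\hat\b$ symmetrically. Matching this against the definition of integrable measure equivalence used in~\cite{BadFurSau13} (and recalled in the references of~\cite{Aus--nilpIME}) gives exactly the required integrability conditions, so $G$ and $H$ are integrably measure equivalent.

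The main point to be careful about — the only real obstacle — is bookkeeping about which fundamental-domain-return cocycle corresponds to which of $\hat\a,\hat\b$, and confirming that the integrability convention in the definition of integrable measure equivalence (integrate over the fundamental domain against the word length on the \emph{target} group, for each fixed source-group element) is literally the statement $\int_X|\hat\a(g,x)|_H\,\mu(dx)<\infty$ $\forall g$ that SSOE$_1$ provides. Everything else is a direct appeal to Lemma~\ref{lem:inf-meas-iso} and the correspondence between stable orbit equivalence and measure equivalence of~\cite[Theorem 3.3]{Furman99}; since the result is only recorded for later reference and is acknowledged to be known to experts, a short proof along these lines suffices.
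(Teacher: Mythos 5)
Your proposal takes essentially the same route as the paper: feed the integrable extensions $\hat{\a}$, $\hat{\b}$ (which exist by the definition of SSOE$_1$) into the construction of $(\hat{X},\hat{\mu},\hat{T})$ and $(\hat{Y},\hat{\nu},\hat{S})$, identify this common system via Lemma~\ref{lem:inf-meas-iso} with the measure coupling arising from $\Phi$ as in the proof of \cite[Theorem 3.3]{Furman99}, and note that the cocycles of this coupling relative to the natural fundamental domains are exactly $\hat{\a}$ and $\hat{\b}$, whose integrability is the defining condition of integrable measure equivalence. One slip in your write-up: $U\times\{e_H\}$ and $V\times\{e_G\}$ are \emph{not} fundamental domains. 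The $H$-orbits in $\hat{X}=X\times H$ are the whole fibres $\{x\}\times H$, so a finite-measure fundamental domain for the $H$-action is $X\times\{e_H\}$ (measure $1$), and likewise $Y\times\{e_G\}$ in $\hat{Y}$ — pulled back through $\hat{\Phi}$ — serves for the $G$-action; the sets $U\times\{e_H\}$ and $V\times\{e_G\}$ are only the pieces along which $\Phi_1$ glues the two models in Lemma~\ref{lem:inf-meas-iso}. Your integrability check in the next paragraph is in fact carried out over $X\times\{e_H\}$, with the integral taken over all of $X$, i.e.\ with the correct domain (up to the harmless replacement of $\hat{\a}(g,x)$ by its inverse, which has the same length), so the argument stands once that sentence is corrected.
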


\section{Finite-index subgroups}\label{sec:finite-ind}

Given an SOE between ergodic actions of two groups, and also a finite-index subgroup of each group, one can construct a new SOE between ergodic actions of those subgroups.  The construction is explained in this section in case the subgroups are normal.  Similar arguments can be carried out without the assumption of normality, but extra technicalities arise which we do not address here.  Many of the results we need can be found in~\cite[Sections 2 and 3]{Furman99}, up to the translation between SOE and measure equivalence: see, for instance,~\cite[Example 2.9]{Furman99}.

If $(X,\mu,T)$ is a $G$ system and $G_1 \leq G$ is a subgroup, then $T^{|G_1}$ denotes the restriction of the action to $G_1$.

Our first tool is the following simple lemma.

\begin{lem}\label{lem:erg-cpts}
Let $(X,\mu,T)$ be an ergodic $G$-system, and let $G_1 \unlhd G$ have finite index.  Then there is a finite measurable partition $\cal{P}$ of $X$ into sets of equal measure such that the ergodic components of the system $(X,\mu,T^{|G_1})$ are obtained by conditioning $\mu$ on the cells of $\cal{P}$.
\end{lem}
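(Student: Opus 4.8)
The plan is to exploit the standard description of the ergodic decomposition of a finite-index subaction. Since $G_1 \unlhd G$ has finite index, $G$ acts on the quotient $G/G_1$, and the quotient map $G \to G/G_1$ is $G$-equivariant for this action. First I would consider the Koopman representation of $G$ on $L^2(\mu)$ and the closed subspace $\rm{Fix}(T^{|G_1})$ of $G_1$-invariant functions; because $G_1$ is normal, this subspace is $G$-invariant, and the full group $G$ acts on it through the finite quotient $G/G_1$. The algebra of $G_1$-invariant \emph{sets} (modulo $\mu$) is likewise a $G$-invariant sub-$\s$-algebra $\cal{I}$, on which $G$ acts through $G/G_1$. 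Since $(X,\mu,T)$ is ergodic, the $G/G_1$-action on the measure algebra of $\cal{I}$ is ergodic.

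The next step is to identify this action concretely. The measure algebra of $\cal{I}$, equipped with an ergodic action of the finite group $Q := G/G_1$, must be (isomorphic to) the action of $Q$ on $Q/Q_0$ with normalized counting measure for some subgroup $Q_0 \leq Q$: this is the elementary classification of ergodic actions of a finite group on a probability algebra. Thus $\cal{I}$ is generated modulo $\mu$ by a single observable $\phi : X \to Q/Q_0$ whose law is uniform on the finite set $Q/Q_0$, and whose level sets therefore all have measure $1/[Q:Q_0]$. Let $\cal{P}$ be the partition of $X$ into these level sets. By construction each cell of $\cal{P}$ lies in $\cal{I}$, hence is $T^{|G_1}$-invariant, and the cells all have equal measure. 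It remains to check that conditioning $\mu$ on a cell of $\cal{P}$ yields an ergodic $G_1$-system: this holds because $\cal{I}$ is exactly the algebra of $G_1$-invariant sets, so it contains every $G_1$-invariant set, and hence within a single cell (an atom of $\cal{I}$) there are no further nontrivial $G_1$-invariant subsets up to measure zero — that is, $\mu_{|P}$ is $T^{|G_1}$-ergodic for each $P \in \cal{P}$.

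The main obstacle is making the phrase ``the ergodic components are obtained by conditioning on the cells of $\cal{P}$'' precise and correctly matched to the standard ergodic-decomposition formalism: one must verify that $\cal{P}$ genuinely indexes the ergodic decomposition of $(X,\mu,T^{|G_1})$, i.e.\ that $\mu = \frac{1}{|\cal{P}|}\sum_{P\in\cal{P}} \mu_{|P}$ with each $\mu_{|P}$ ergodic and the $\mu_{|P}$ pairwise distinct (equivalently, mutually singular). Distinctness follows since the cells are disjoint of positive measure; ergodicity of each $\mu_{|P}$ is the content of the previous paragraph. A minor technical point worth handling carefully is the passage between $G$-invariant sub-$\s$-algebras and genuine factor maps / observables on a standard Borel space, but this is routine given that $(X,\mu)$ is standard Borel. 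Alternatively, and perhaps more cleanly, one can avoid the representation-theoretic language entirely: let $\cal{P}$ be the finite partition into ergodic components guaranteed by the general (finite) ergodic decomposition of $T^{|G_1}$ — finiteness because $[G:G_1] < \infty$ forces at most $[G:G_1]$ components — and then observe that $G$ permutes these components transitively (by ergodicity of $T$ and normality of $G_1$, so that $T^g$ carries $G_1$-invariant sets to $G_1$-invariant sets), whence they are permuted transitively by a measure-preserving action and so all have the same measure.
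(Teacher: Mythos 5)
Your proposal is correct and is essentially the paper's argument: both work with the $\s$-algebra of $T^{|G_1}$-invariant sets, use normality of $G_1$ so that $G$ acts on it through the finite quotient $G/G_1$, and use ergodicity of $T$ to force this algebra to be atomic with atoms permuted transitively (hence of equal measure), each atom carrying an ergodic $G_1$-action. The classification of ergodic finite-group actions on a probability algebra that you invoke is itself proved by exactly the measure bound the paper computes directly (any positive-measure invariant set has measure at least $1/[G:G_1]$), so the two write-ups differ only in packaging.
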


\begin{proof}
Let $g_1G_1,\dots,g_kG_1$ be the distinct left cosets of $G_1$ in $G$. Let $\A$ be the $\s$-algebra of $T^{|G_1}$-invariant sets.  It is a factor of the whole $G$-action, because $G_1$ is normal in $G$.  If $A \in \A$ has positive measure, then $G_1$-invariance implies that
\begin{equation}\label{eq:union}
\bigcup_{g\in G}T^gA = \bigcup_{i=1}^k T^{g_i}A.
\end{equation}
This set is invariant for the whole $G$-action and has positive measure, so that measure must equal $1$ by ergodicity.  Therefore $\mu(A) \geq 1/k$.

So all members of $\A$ either have measure zero or have measure at least $1/k$, and so $\A$ is atomic modulo negligible sets.  Letting $\cal{P}$ be a set of atoms for $\A$ modulo negligible sets, we obtain that
\begin{itemize}
 \item[(i)] the action $T^{|G_1}$ is ergodic inside each cell of $\cal{P}$, and
\item[(ii)] the action $T$ permutes the cells of $\cal{P}$, and must do so transitively because any union as in~(\ref{eq:union}) has full measure in $X$.
\end{itemize}
Conclusion (i) implies that the ergodic components of $(X,\mu,T^{|G_1})$ are obtained by conditioning on the cells of $\cal{P}$, and conclusion (ii) implies that all those cells have the same measure.
\end{proof}

\begin{lem}\label{lem:ent-and-index}
In the setting of the previous lemma, if $G$ is amenable and $P \in \cal{P}$ then
\[\rmh(\mu_{|P},T^{|G_1}) = [G:G_1]\cdot \rmh(\mu,T).\]
\end{lem}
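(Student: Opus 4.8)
The plan is to compare the entropy of the sub-action $(X,\mu_{|P},T^{|G_1})$ with that of the full action $(X,\mu,T)$ by running the definition of KS entropy along a carefully chosen F\o lner sequence. First I would fix coset representatives $g_1 = e_G, g_2,\dots, g_k$ for $G_1$ in $G$, where $k = [G:G_1]$, and recall from Lemma~\ref{lem:erg-cpts} that $T$ permutes the cells of $\cal{P}$ transitively; after relabelling we may assume $T^{g_i}$ maps $P$ onto the cell $P_i$, with $P_1 = P$. The key observation is that if $(F_n)_n$ is a F\o lner sequence for the amenable group $G_1$, then $(F_n)_n$ is \emph{not} a F\o lner sequence for $G$, but $(F_n \cdot \{g_1,\dots,g_k\})_n$ — or more conveniently a F\o lner sequence $(E_n)_n$ for $G$ chosen so that each $E_n$ is, up to small error, a disjoint union of right-translates $F_n g_i$ of a F\o lner set for $G_1$ — will do the job. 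The point is that $|E_n| \approx k|F_n|$.

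The main computation is then to relate $\rmH_{\mu_{|P}}(\phi^{F_n})$ to $\rmH_\mu(\psi^{E_n})$ for suitable observables. Given an observable $\phi$ on $(X,\mu_{|P})$, I would extend it to an observable $\t\phi$ on all of $(X,\mu)$ which records, on each cell $P_i$, the value that $\phi$ takes at the corresponding point of $P = P_1$ (i.e. $\t\phi(x) = \phi(T^{g_i^{-1}}x)$ for $x \in P_i$), together with the index $i$ of the cell containing $x$ — the latter costs only $\log k$ in Shannon entropy, which is absorbed in the limit. Using the cocycle-free description $\phi^{F_n}$ sees exactly the $G_1$-orbit structure inside $P$, while $\t\phi^{E_n}$ over $\mu$ decomposes, via the partition into cells, into $k$ copies of the $\mu_{|P}$-picture along $F_n$; since $\mu(P_i) = 1/k$ and conditioning on $\cal{P}$ gives the ergodic components, one gets
\[
\rmH_\mu(\t\phi^{E_n}) = \rmH_\mu(\cal{P}) + \sum_{i=1}^k \tfrac1k \rmH_{\mu_{|P_i}}\big((\t\phi^{E_n})|P_i\big) + o(|E_n|),
\]
and each inner term equals $\rmH_{\mu_{|P}}(\phi^{F_n})$ up to a bounded error, because $T^{g_i}$ conjugates the $\mu_{|P}$-system to the $\mu_{|P_i}$-system. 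Dividing by $|E_n| \approx k|F_n|$ and letting $n\to\infty$ yields $\rmh(\mu,T,\t\phi) = \tfrac1k \rmh(\mu_{|P},T^{|G_1},\phi)$, i.e. $\rmh(\mu_{|P},T^{|G_1},\phi) = k\cdot \rmh(\mu,T,\t\phi)$. Taking the supremum over $\phi$ on one side and noting that every observable on $(X,\mu)$ can be built from (the restriction to $P$ of) such a $\t\phi$ plus the cell-index data gives the two matching inequalities $\rmh(\mu_{|P},T^{|G_1}) \le k\cdot\rmh(\mu,T)$ and $\ge$, hence equality.

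The step I expect to be the main obstacle is the bookkeeping around F\o lner sequences: one must choose $(E_n)_n$ for $G$ so that it genuinely computes $\rmh(\mu,T)$ \emph{and} is compatible with a $G_1$-F\o lner sequence in the tiling sense above, so that the error terms from the boundary of $F_n$ inside $E_n$, and from the non-exactness of the coset decomposition of $E_n$, are both $o(|F_n|)$. A clean way to sidestep some of this is to invoke the Abramov--Rokhlin-type formula for the entropy of a restricted action — or, more elementarily, to use that for a finite-index normal subgroup $G_1 \unlhd G$ and a $G$-system, the restricted $G_1$-entropy of the full system equals $[G:G_1]$ times the $G$-entropy (a standard fact), and then transfer from $(X,\mu,T^{|G_1})$ to its ergodic component $(X,\mu_{|P},T^{|G_1})$ using that the components are permuted transitively with equal weights $1/k$, so that the entropy of the non-ergodic $G_1$-system is the average of the (equal) entropies of its components, i.e. equals $\rmh(\mu_{|P},T^{|G_1})$. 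Combining these two identities gives the lemma directly; the remaining work is then just to justify the Abramov--Rokhlin formula in the amenable setting, for which one again runs the F\o lner-sequence argument sketched above.
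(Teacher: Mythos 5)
Your proposal is correct and, in its streamlined form at the end, is essentially the paper's own argument: the paper combines the standard identity $\rmh(\mu,T^{|G_1}) = [G:G_1]\,\rmh(\mu,T)$ with the affinity of entropy over the equal-measure ergodic components and the fact that $T^g$ (using normality of $G_1$) conjugates the component systems to one another, so all components have equal entropy. Your longer F\o lner-tiling computation is just an unpacking of the ``standard calculation'' the paper cites for the first identity, so there is no substantive difference in approach.
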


\begin{proof}
A standard calculation from the definition of KS entropy gives
\begin{equation}\label{eq:ent1}
\rmh(\mu,T^{|G_1}) = [G:G_1]\cdot \rmh(\mu,T).
\end{equation}
On the other hand, since all cells of $\cal{P}$ have equal measure, the affinity of the entropy function gives
\begin{equation}\label{eq:ent2}
\rmh(\mu,T^{|G_1}) = \frac{1}{|\cal{P}|}\sum_{P \in \cal{P}}\rmh(\mu_{|P},T^{|G_1}).
\end{equation}
Lastly, all of the systems $(\mu_{|P},T^{|G_1})$ for $P \in \cal{P}$ are conjugate-isomorphic.  Indeed, if $P,P' \in \cal{P}$, then we may choose $g \in G$ such that $T^gP = P'
$, and now the transformation $\Psi:= T^g$ sends the measure $\mu_{|P}$ to the measure $\mu_{|P'}$ and satisfies
\[\Psi\circ T^{g_1} = T^{\phi(g_1)}\circ \Psi \quad \hbox{for all}\ g_1 \in G_1,\]
where $\phi \in \rm{Aut}(G_1)$ is conjugation by $g$.

Therefore all the summands on the right-hand side of~(\ref{eq:ent2}) are equal, and so by~(\ref{eq:ent1}) they must all be equal to $[G:G_1]\cdot \rmh(\mu,T)$.
\end{proof}

Now let $(X,\mu,T)$ and $(Y,\nu,S)$ be free ergodic $G$- and $H$-systems respectively. Let $\Phi:(X,\mu,T)\rightarrowtail (Y,\nu,S)$ be a SSOE$_1$ or SOE$_\infty$, and let $U:= \dom \Phi$ and $V:=\img \Phi$.  Recall that we always assume $G$ and $H$ are infinite, and let $G_1 \unlhd G$ and $H_1 \unlhd H$ be normal subgroups of finite index.

In this situation, we will construct an SSOE$_1$ or SOE$_\infty$ between a free ergodic $G_1$-system and a free ergodic $H_1$-system so that the new entropies and new compression are related to the old values in the following simple way.

\begin{prop}\label{prop:pass-to-subgps}
There are a free ergodic $G_1$-system $(X_1,\mu_1,T_1)$ and a free ergodic $H_1$-system $(Y_1,\nu_1,S_1)$ such that
\begin{equation}\label{eq:old-ent-new-ent}
\rmh(\mu_1,T_1) = [G:G_1]\cdot \rmh(\mu,T) \quad \hbox{and} \quad \rmh(\nu_1,S_1) = [H:H_1]\cdot \rmh(\nu,S),
\end{equation}
and an SSOE$_1$ (resp. SOE$_\infty$)
\[\Phi_1:(X_1,\mu_1,T_1) \rightarrowtail (Y_1,\nu_1,S_1)\]
such that
\begin{equation}\label{eq:old-comp-new-comp}
\comp(\Phi_1) = \frac{[H:H_1]}{[G:G_1]}\comp(\Phi).
\end{equation}
\end{prop}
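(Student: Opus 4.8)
The plan is to build the new systems directly from the ergodic-component structure of the restricted actions, using Lemmas~\ref{lem:erg-cpts} and~\ref{lem:ent-and-index} to control the entropies. First I would apply Lemma~\ref{lem:erg-cpts} to $(X,\mu,T)$ and the finite-index normal subgroup $G_1$, obtaining a partition $\cal{P} = \{P_1,\dots,P_k\}$ of $X$ into sets of equal measure $1/k$ (where $k = [G:G_1]$) whose cells carry the ergodic components of $T^{|G_1}$, and similarly a partition $\cal{Q} = \{Q_1,\dots,Q_\ell\}$ of $Y$ for $S^{|H_1}$ with $\ell = [H:H_1]$. The candidate systems are $(X_1,\mu_1,T_1) := (P_1,\mu_{|P_1},T^{|G_1})$ and $(Y_1,\nu_1,S_1) := (Q_1,\nu_{|Q_1},S^{|H_1})$; these are free and ergodic, and the entropy identities~(\ref{eq:old-ent-new-ent}) are exactly Lemma~\ref{lem:ent-and-index}.

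The substance is constructing $\Phi_1$ and checking it is an SOE with the right tail behaviour and compression. The orbit equivalence $\Phi:U\to V$ carries $T$-orbits to $S$-orbits, so it carries $T^{|G_1}$-orbits (which are unions of the intersections of $T$-orbits with the cells of $\cal P$) into $S$-orbits; but a single $T^{|G_1}$-orbit inside $U\cap P_i$ need not be sent into a single $Q_j$. The fix is standard for passing to finite-index subgroups: set $U_1 := U\cap P_1\cap \Phi^{-1}(Q_1)$, which has positive measure for at least one choice of indices after relabelling (since $\sum_{i,j}\mu(U\cap P_i\cap\Phi^{-1}(Q_j)) = \mu(U) > 0$; by ergodicity of the whole $G$-action and transitivity of $T$ on $\cal P$, combined with the analogous statement for $\cal Q$, one can arrange a positive-measure piece for a fixed pair, say $(1,1)$). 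Then define $\Phi_1 := \Phi|U_1 : U_1 \to V_1$ where $V_1 := \Phi(U_1)\subseteq Q_1$. For $x\in U_1$ and $g_1\in G_1$ with $T^{g_1}x\in U_1$, we have $\Phi(T^{g_1}x) = S^{\a(g_1,x)}\Phi(x)$ with $\Phi(x),\Phi(T^{g_1}x)\in Q_1$; since the $\cal Q$-cell is an ergodic component of $S^{|H_1}$ and $S$ permutes the cells via the quotient $H/H_1$, the element $\a(g_1,x)$ must lie in $H_1$. Hence the partial cocycle $\a_1$ describing $\Phi_1$ is $H_1$-valued, and likewise $\b_1$ is $G_1$-valued; so $\Phi_1$ is a genuine SOE between the $G_1$- and $H_1$-systems. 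For the compression, $\comp(\Phi_1) = \nu_1(V_1)/\mu_1(U_1) = (\nu(V_1)/\nu(Q_1))/(\mu(U_1)/\mu(P_1)) = (k/\ell)\cdot \nu(V_1)/\mu(U_1)$; taking $U_1$ to be a full $\Phi$-preimage inside $P_1$ of a $\cal Q$-saturated-enough piece, or simply observing $\mu(U_1) = \nu(V_1)/\comp(\Phi)$ since $\Phi$ scales measure by $\comp(\Phi)$, yields~(\ref{eq:old-comp-new-comp}).

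It remains to transfer the integrability or boundedness hypothesis. In the SOE$_\infty$ case this is immediate: the bound $|\a_1(g_1,x)|_{H_1}\le C'|g_1|_{G_1}$ with respect to the word metrics on the subgroups follows from $|\a(g_1,x)|_H\le C|g_1|_G$ together with the bi-Lipschitz equivalence of $|\cdot|_{G_1}$ on $G_1$ with the restriction of $|\cdot|_G$, and similarly for $\b_1$; so a new constant $C'$ works. The SSOE$_1$ case is the main obstacle: one must produce an \emph{integrable} extension of $\a_1$ to all of $G_1\times X_1$. Here I would take the given integrable extension $\s:G\times X\to H$ of $\a$, restrict it to $G_1$ and to one ergodic component, and massage it: $\s|_{G_1\times P_1}$ is a cocycle over $(P_1,\mu_{|P_1},T^{|G_1})$ that extends $\a_1$, and its integrability over $P_1$ follows from $\int_X|\s(g_1,x)|_H\,\mu(dx)<\infty$ since $\mu_{|P_1} = k\cdot\mu|_{P_1}$ and $|\cdot|_{H_1}$ on $H_1$ is comparable to $|\cdot|_H|_{H_1}$ — except that $\s|_{G_1\times P_1}$ is $H$-valued, not $H_1$-valued. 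To repair this one composes with a transversal: writing $H = \bigsqcup_j h_jH_1$ and letting $q:H\to H_1$ be the measurable map $q(h) := $ the $H_1$-part after recording which coset, one replaces $\s$ by a cohomologous cocycle valued in $H_1$ on $P_1$, using that the "coset part" is itself a cocycle into the finite group $H/H_1$ and hence a coboundary after passing to an ergodic extension (cf.\ the argument in Lemma~\ref{lem:erg-cpts} that $\A$ is atomic). Concretely one builds the $G_1$-system $X_1$ slightly larger if needed — as an extension of $P_1$ by $H/H_1$ — on which the coset cocycle trivialises; then the $H_1$-valued extension exists and is integrable because multiplying by the bounded correction from the finite set $\{h_j\}$ changes lengths by a bounded amount. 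Finally one checks the new $X_1$ still satisfies~(\ref{eq:old-ent-new-ent}), which it does since extending by a finite group adds $\le\log[H:H_1]$ to entropy per generator but in fact adds nothing because the correction is a function of the original $G_1$-system — or, more cleanly, one verifies directly that the coset function is already measurable with respect to $P_1$ so no extension is needed. The reverse cocycle $\b_1$ is handled symmetrically.
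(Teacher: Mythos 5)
There is a genuine gap at the very first step of your construction of $\Phi_1$. From $\Phi(x)\in Q_1$ and $S^{\a(g_1,x)}\Phi(x)\in Q_1$ you conclude $\a(g_1,x)\in H_1$, but membership in the same ergodic component of $S^{|H_1}$ only forces $\a(g_1,x)$ to lie in the stabilizer of $Q_1$ under the permutation action of $H$ on the cells of $\cal{Q}$, and this stabilizer contains $H_1$ but can be strictly larger: Lemma~\ref{lem:erg-cpts} does \emph{not} say there are $[H:H_1]$ cells of measure $1/[H:H_1]$, only that the (possibly fewer) cells have equal measure, and in the common case that $S^{|H_1}$ is still ergodic there is a single cell $Q_1=Y$ and your argument gives no constraint at all. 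So $\Phi_1=\Phi|U_1$ need not carry $T^{|G_1}$-orbits into $S^{|H_1}$-orbits, i.e.\ it need not be an SOE of the subgroup actions. The same issue wrecks the bookkeeping: since the number of components can be anything from $1$ up to the index, $\mu(P_1)\neq 1/[G:G_1]$ in general, and if both restricted actions remain ergodic your construction yields $\comp(\Phi_1)=\comp(\Phi)$ while the entropies still multiply by $[G:G_1]$ and $[H:H_1]$ (Lemma~\ref{lem:ent-and-index}), contradicting~(\ref{eq:old-comp-new-comp}) whenever the indices differ. (Your displayed ratio $(k/\ell)$ is also inverted, but that is minor by comparison.)

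The paper avoids both problems by \emph{not} working on $X$ and $Y$ themselves: it first forms the skew products $X_1=X\times(H/H_1)$ and $Y_1=Y\times(G/G_1)$, with the $G$- and $H$-actions twisted by the coset parts $\ol{\hat{\a}}$, $\ol{\hat{\b}}$ of the extended cocycles. On $U_1=U\times\{e_{\ol{H}}\}$, returning to $U_1$ under $G_1$ forces $\ol{\a(g,x)}=e_{\ol{H}}$, i.e.\ $\a(g,x)\in H_1$, by construction — exactly the property your restriction lacks — and the extra finite fibres are what make $\t{\mu}(U_1)=\mu(U)/[H:H_1]$ and $\t{\nu}(V_1)=\nu(V)/[G:G_1]$, producing the ratio $[H:H_1]/[G:G_1]$ in the compression. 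Ergodicity is then restored by conditioning on matching ergodic components, and the delicate point that both sides have the \emph{same} number of components (so that conditioning rescales the two measures by the same factor) is proved by identifying both component partitions with the ergodic decomposition of the $(G_1\times H_1)$-action on the infinite-measure coupling of Lemma~\ref{lem:inf-meas-iso}. Your final paragraph does reach for an extension of $P_1$ by $H/H_1$, but only to repair integrability of the extended cocycle in the SSOE$_1$ case, not the failure of $\Phi_1$ to be an SOE in the first place; and the alternative claim that "the coset function is already measurable with respect to $P_1$ so no extension is needed" is unjustified. If you rebuild the argument starting from the skew products, the rest of your outline (restriction of the cocycles, bounded Radon--Nikodym derivatives preserving SSOE$_1$/SOE$_\infty$, bi-Lipschitz comparison of word metrics on finite-index subgroups) falls into place.
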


This proposition enables one to deduce Theorem A for a pair of groups if it is known for a pair of finite-index subgroups.  This is important in the case of virtually Euclidean groups, which are treated in the next section.

The key to Proposition~\ref{prop:pass-to-subgps} is the infinite-measure-preserving $(G\times H)$-system of Lemma~\ref{lem:inf-meas-iso}.  Let $\hat{\a}:G\times X\to H$ and $\hat{\b}:H\times Y\to G$ be extensions of $\a$ and $\b$ as given by Lemma~\ref{prop:cocyc-extend}, and let $(\hat{X},\hat{\mu},\hat{T})$ and $(\hat{Y},\hat{\nu},\hat{S})$ be the isomorphic $(G\times H)$-systems that appear in Lemma~\ref{lem:inf-meas-iso}.

\begin{proof}
\emph{Step 1.}\quad We first construct a new $G$-system and a new $H$-system.  In a later step we will restrict these to $G_1$ and $H_1$ and then obtain $(X_1,\mu_1,T_1)$ and $(Y_1,\nu_1,S_1)$ as ergodic components of those restrictions.

Let $\ol{G} := G/G_1$ and $\ol{H} := H/H_1$ be the finite quotient groups.  For elements $g \in G$ and $h \in H$ let $\ol{g} \in \ol{G}$ and $\ol{h} \in \ol{H}$ be their respective images.

Now let $X_1 := X\times \ol{H}$ and $Y_1 := Y\times \ol{G}$.  Let $\t{\mu}$ on $X_1$ be the product of $\mu$ and the Haar measure on $\ol{H}$, and define $\t{\nu}$ on $Y_1$ similarly.  Let $\pi_X:X_1\to X$ and $\pi_Y:Y_1\to Y$ be the coordinate projections, so these are $[H:H_1]$-to-$1$ and $[G:G_1]$-to-$1$, respectively.

Define a $G$-action $\t{T}$ on $(X_1,\t{\mu})$ by
\[\t{T}^g(x,\ol{h}) := (T^gx,\ol{\hat{\a}(g,x)}\cdot \ol{h}),\]
and similarly define an $H$-action $\t{S}$ on $(Y_1,\t{\nu})$ by
\[\t{S}^h(y,\ol{g}) := (S^hy,\ol{\hat{\b}(h,y)}\cdot \ol{g}).\]
Then $\pi_X$ intertwines $\t{T}$ with $T$ and $\pi_Y$ intertwines $\t{S}$ with $S$.

\vspace{7pt}

\emph{Step 2.}\quad Now let $T_1 := \t{T}^{|G_1}$ and $S_1 := \t{S}^{|H_1}$.  Doing so gives a free $G_1$-system $(X_1,\t{\mu},T_1)$ and a free $H_1$-system $(Y_1,\t{\nu},S_1)$.  These systems need not be ergodic.

We set the issue of ergodicity aside for now, and next construct an SOE between these systems.  Let
\[U_1 := U\times \{e_{\ol{H}}\} \quad \hbox{and} \quad V_1 := V\times \{e_{\ol{G}}\},\]
and observe that
\begin{equation}\label{eq:newmeass}
\t{\mu}(U_1) = \frac{1}{[H:H_1]}\mu(U) \quad \hbox{and} \quad \t{\nu}(V_1) = \frac{1}{[G:G_1]}\nu(V).
\end{equation}

Define $\t{\Phi}:U_1\to V_1$ by $\t{\Phi}(x,e_{\ol{H}}) := (\Phi(x),e_{\ol{G}})$.  This is an SOE from $(X_1,\t{\mu},T_1)$ to $(Y_1,\t{\nu},S_1)$.  To see this, suppose that $(x,e_{\ol{H}}) \in U_1$ and $g \in G_1$ are such that $T_1^g(x,e_{\ol{H}}) = (T^g x,\ol{\hat{\a}(g,x)}) \in U_1$.  Since $x,T^gx \in U$, we have $\hat{\a}(g,x) = \a(g,x)$.  Now the following both hold:
\begin{itemize}
 \item[(i)] The points $x$ and $T^gx$ lie in the same class of $\R_T\cap (U\times U)$, and hence their $\Phi$-images lie in the same class of $\R_S\cap (V\times V)$, because $\Phi$ is an SOE.
\item[(ii)] Since $T_1^g(x,e_{\ol{H}}) \in U_1$, we must have $\ol{\a(g,x)} = e_{\ol{H}}$, and hence $\a(g,x) \in H_1$.  Therefore the points $\Phi(x)$ and $\Phi(T^gx) = S^{\a(g,x)}\Phi(x)$ actually lie in the same $H_1$-orbit, not just the same $H$-orbit.
\end{itemize}
These conclusions show that $\t{\Phi}$ maps the classes of $\R_{T_1}\cap (U_1\times U_1)$ into classes of $\R_{S_1}\cap (V_1\times V_1)$.  By the symmetry of the construction, the same holds in reverse, and so $\t{\Phi}$ is an SOE as required.

Observe that the calculation~(\ref{eq:newmeass}) gives
\[\comp(\t{\Phi}) = \frac{\t{\nu}(V_1)}{\t{\mu}(U_1)} = \frac{[H:H_1]}{[G:G_1]}\comp(\Phi),\]
where we use the measures $\t{\mu}$ and $\t{\nu}$ on our two new systems.

\vspace{7pt}

\emph{Step 3.}\quad We will now replace $\t{\mu}$ and $\t{\nu}$ with ergodic measures so as to preserve the properties obtained above.

This relies on the following observation.  Let us identify $G$ and $G_1$ with the corresponding subgroups in the first coordinate of $G\times H$, and similarly for $H$ and $H_1$. Then the space $(X_1,\t{\mu})$ may be identified with a fundamental domain for the action $\hat{T}^{|H_1}$ on the infinite measure space $(\hat{X},\hat{\mu})$, and so $(X_1,\t{\mu},T_1)$ may be identified with the factor of $(\hat{X},\hat{\mu},\hat{T}^{|G_1})$ consisting of $\hat{T}^{H_1}$-invariant sets.  Similarly, $(Y_1,\t{\nu},S_1)$ may be identified with the $\hat{S}^{G_1}$-invariant factor of $(\hat{Y},\hat{\nu},\hat{S}^{|H_1})$.

As a result, there is a measure-preserving $(G\times H)$-action on $(X_1,\t{\mu})$ given by the quotient of the full $(G\times H)$-system $(\hat{X},\hat{\mu},\hat{T})$, and $T_1$ is the restriction of that $(G\times H)$-action to $G_1$.  This $(G\times H)$-action on $(X_1,\t{\mu})$ is ergodic, because the infinite-measure-preserving system above it is ergodic.  On the other hand, $G_1 \cong G_1\times \{e_H\}$ is normal in $G\times H$, because $G_1$ is normal in $G$. We may therefore apply Lemma~\ref{lem:erg-cpts} to the inclusion of $T_1$ into this larger ergodic $(G\times H)$-action.  It tells us that the $G_1$-system $(X_1,\t{\mu},T_1)$ has some finite number, say $n$, of ergodic components, and each of them is obtained by conditioning on an invariant set of measure $1/n$.  Let $\cal{P}$ be the partition of $X_1$ consisting of these components. An analogous argument gives a finite partition $\cal{Q}$ of $(Y_1,\t{\nu})$ into equal-measure ergodic components for $S_1$; let $m$ be the number of these.

Crucially, we can now show that $n=m$.  Since $(X_1,\t{\mu})$ is the quotient of $(\hat{X},\hat{\mu})$ by the action $\hat{T}^{|H_1}$, we may identify $\cal{P}$ with the partition of $(\hat{X},\hat{\mu})$ into ergodic components for the combined action $\hat{T}^{|G_1\times H_1}$.  Similarly, $\cal{Q}$ may be identified with the partition of $(\hat{Y},\hat{\nu})$ into ergodic components for the combined action $\hat{S}^{|G_1\times H_1}$.  But those two actions are isomorphic up to a constant change of measure, by Lemma~\ref{lem:inf-meas-iso}, and so they have the same numbers of ergodic components.

To finish our construction, choose one of the ergodic components $P \in \cal{P}$ for which $\t{\mu}(P\cap U_1) > 0$.  The restriction of $\cal{P}$ to $U_1$ gives the ergodic decomposition of $\R_{T_1}\cap (U_1\times U_1)$ up to negligible sets, and $\t{\Phi}$ carries that restriction to the ergodic decomposition of $\R_{S_1}\cap (V_1\times V_1)$.  Therefore $\t{\Phi}$ identifies $P\cap U_1$ with $Q\cap V_1$ for a unique cell $Q\in \cal{Q}$.  Now let $\mu_1:= \t{\mu}_{|P}$ and $\nu_1 := \t{\nu}_{|Q}$.  Then the restriction $\Phi_1 := \t{\Phi}|P\cap U_1$ defines a SOE from $(X_1,\mu_1,T_1)$ to $(Y_1,\nu_1,T_1)$, and these are a free ergodic $G_1$-system and a free ergodic $H_1$-system respectively.

For these systems, we may calculate the entropy using Lemma~\ref{lem:ent-and-index}.  On the other hand, we observe that $\comp(\Phi_1)$ just equals $\comp(\t{\Phi})$ because $n=m$.  Note that, although $\Phi_1$ is simply a restriction of $\t{\Phi}$ to a subset, the equality $n=m$ is needed for this second calculation because the measures have also been changed: from $\t{\mu}$ and $\t{\nu}$ to their restrictions $\mu_1$ and $\nu_1$.

The partial cocycles associated to $\Phi_1$ and $\Phi_1^{-1}$ are simply restrictions of those associated to $\Phi$ and $\Phi^{-1}$. Also, the new measures $\mu_1$ and $\nu_1$ have bounded Radon--Nikodym derivatives with respect to $\t{\mu}$ and $\t{\nu}$ respectively.  Therefore $\Phi_1$ is an SSOE$_1$ (resp. SOE$_\infty$) if $\Phi$ has this property.
\end{proof}

\section{Virtually Euclidean groups}\label{sec:Ab}

This section proves Theorem B, which concerns a SOE$_\infty$ or SSOE$_1$ between actions of Euclidean lattices.  From this we deduce Theorem A in case $G$ and $H$ are both virtually Euclidean: that is, they contain finite-index subgroups isomorphic to Euclidean lattices.  By intersecting finitely many conjugates, one may assume that those subgroups are normal.  Let $e_1$, \dots, $e_d$ be the standard basis of $\bbZ^d$, and let $|\cdot|$ be the corresponding $\ell^1$-norm on $\bbZ^d$.

The special case of Euclidean lattices is important for two reasons.  Firstly, we will make contact with the older notion of Kakutani equivalence for actions of Euclidean lattices, which has been studied much more thoroughly than SSOE$_1$.  I do not know whether these notions are actually equivalent.  Secondly, our approach to Theorem A in the remainder of the paper needs the assumption that $G$ and $H$ have super-linear growth, so it does not cover virtually cyclic groups.  We therefore need the results of the present section to prove Theorem A in that case.  For groups containing a finite-index copy of $\bbZ^d$ with $d \geq 2$, we end up with two proofs of Theorem A, one in the present section and the other from the remainder of the paper.

We start with Theorem B, which applies to Euclidean lattices themselves, and then prove Theorem A for virtually Euclidean groups using Theorem B and Proposition~\ref{prop:pass-to-subgps}.

In the Euclidean case, Theorem B connects SOE$_\infty$ and SSOE$_1$ with the generalization of Kakutani equivalence to $\bbZ^d$-actions developed in~\cite{Kat77,delJRud84,Has92}.  We use the definition of this property from~\cite[Definition 3]{delJRud84}:

\begin{dfn}\label{dfn:Kak}
	Let $M$ be a real $(d\times d)$-matrix.  Two $\bbZ^d$-systems $(X,\mu,T)$ and $(Y,\nu,S)$ are \textbf{$M$-Kakutani equivalent} if there is an SOE $\Phi:(X,\mu,T)\rightarrowtail (Y,\nu,S)$ with the following properties:
	\begin{itemize}
		\item[(i)] $\dom \Phi = X$, and
		\item[(ii)] if $\a:\bbZ^d\times X\to \bbZ^d$ is the cocycle describing $\Phi$, then for any $\eps > 0$ there are $N_\eps \in \bbN$ and $A_\eps \subseteq X$ with $\mu(A_\eps) > 1-\eps$ such that, if $v \in \bbZ^d$ has $|v| \geq N$, and $x \in A_\eps \cap T^{-v}A_\eps$, then
		\[|\a(v,x) - Mv| \leq \eps|v|.\]
	\end{itemize}
\end{dfn}

In their paper, del Junco and Rudolph refer to $\Phi$ as an `orbit injection', rather than a `SOE', and say that it `maps distinct orbits into distinct orbits'.  They also make the explicit assumption that $M$ is invertible with $|\det M|\geq 1$.  However, the paragraph immediately following the proof of their Proposition 3 makes it clear that this is what we call a SOE, and that the other parts of Definition~\ref{dfn:Kak} actually require that $|\det M|\geq 1$.

It is helpful to know that part (ii) of Definition~\ref{dfn:Kak} can be replaced by the following apparently weaker condition:
\begin{itemize}
\item[\emph{(ii)$'$}]	\emph{For any $\eps > 0$ there are $N_\eps \in \bbN$ and $A_\eps \subseteq X$ with $\mu(A_\eps) > 1 - \eps$ such that, if $1 \leq i \leq d$, $n \geq N_\eps$, and $x \in A_\eps \cap T^{-ne_i}A_\eps$, then
	\[|\a(ne_i,x) - nMe_i| < \eps n.\]
}
\end{itemize}
The condition that this holds for some basis in $\bbZ^d$ is Condition 1 on p93 of~\cite{delJRud84}.  The fact that it implies $M$-Kakutani equivalence is their Proposition 7.

The first assertion of Theorem B reduces our work to the case of SSOE$_1$.  We isolate it as the following lemma.

\begin{lem}
If $(X,\mu,T)$ is a $\bbZ^d$-system, $(Y,\nu,S)$ is a $\bbZ^D$-system, and they are SOE$_\infty$, then the partial cocycles $\a$ and $\b$ which describe this SOE have extensions to full cocycles $\bbZ^d\times X\to \bbZ^D$ and $\bbZ^D\times Y\to \bbZ^d$ which are still bounded.  In particular, the systems are SSOE$_1$.
\end{lem}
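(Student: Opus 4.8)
The plan is to build, separately and directly, a bounded full cocycle $\hat\a:\bbZ^d\times X\to\bbZ^D$ extending $\a$ and a bounded full cocycle $\hat\b:\bbZ^D\times Y\to\bbZ^d$ extending $\b$. By the symmetry of the hypotheses it suffices to treat $\a$; since a bounded full cocycle is automatically integrable (as noted in Subsection~\ref{subs:syscoc}), the pair $(\hat\a,\hat\b)$ then exhibits the SOE as a SSOE$_1$. I would deliberately \emph{avoid} Proposition~\ref{prop:cocyc-extend} here: its return-map extension has word length comparable to the (unbounded) return times to $U$. Instead I would exploit the two features special to this setting — that $G$ and $H$ are abelian and that $|\cdot|_G,|\cdot|_H$ are honest metrics — so that extending $\a$ becomes a measurable, equivariant Lipschitz-extension problem.

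Fix an \emph{integer} $C$ with $|\a(v,x)|_H\le C|v|_G$ for all $v$ and a.e.\ $x\in U\cap T^{-v}U$. For a.e.\ $x$ put $P_x:=\{v\in\bbZ^d:\ T^vx\in U\}$; since the systems are ergodic and $\mu(U)>0$, $P_x$ is nonempty a.s. The partial cocycle identity shows that $\a$ restricted to the orbit of $x$ is ``exact'' on the (generally irregular) set $P_x$: there is $h_x:P_x\to\bbZ^D$, unique up to an additive constant, with $h_x(p)-h_x(q)=\a(p-q,T^qx)$ for all $p,q\in P_x$. Pin it down by $h_x(u_x)=0$, where $u_x$ is the $|\cdot|_G$-closest point of $P_x$ to $0$ (chosen measurably with a fixed tie-break). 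Each coordinate $h_x^{(j)}$ is then $C$-Lipschitz from $(\bbZ^d,|\cdot|_G)$ to $\bbZ$, because $|h_x^{(j)}(p)-h_x^{(j)}(q)|\le|\a(p-q,T^qx)|_H\le C|p-q|_G$. Now, for each $j\le D$ and $n\in\bbZ^d$, set
\[
\hat{h}_x^{(j)}(n):=\sup_{p\in P_x}\bigl(h_x^{(j)}(p)-C|n-p|_G\bigr),
\]
the McShane (sup-convolution) extension. The $C$-Lipschitz bound makes the supremand bounded above uniformly in $p$, and since $C\in\bbZ$ and $h_x^{(j)}$ is integer-valued the supremum is attained and lies in $\bbZ$; moreover $\hat{h}_x^{(j)}$ is $C$-Lipschitz and agrees with $h_x^{(j)}$ on $P_x$. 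Finally define $\hat\a(v,x)\in\bbZ^D$ coordinatewise by $\hat\a(v,x)_j:=\hat{h}_x^{(j)}(v)-\hat{h}_x^{(j)}(0)$.

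Three verifications remain, all routine. (i) \emph{Cocycle identity}: because $P_{T^wx}=P_x-w$ and $h_{T^wx}=h_x(\cdot+w)+\mathrm{const}$, the displayed formula yields $\hat{h}_{T^wx}^{(j)}(n)=\hat{h}_x^{(j)}(n+w)+\mathrm{const}$, and these constants cancel in the differences $\hat{h}^{(j)}(v)-\hat{h}^{(j)}(0)$, giving $\hat\a(v+w,x)=\hat\a(v,T^wx)+\hat\a(w,x)$. (ii) \emph{Extends $\a$}: if $x,T^vx\in U$ then $0,v\in P_x$, where $\hat{h}_x^{(j)}=h_x^{(j)}$, so $\hat\a(v,x)_j=h_x^{(j)}(v)-h_x^{(j)}(0)=\a(v,x)_j$. (iii) \emph{Bounded}: $|\hat\a(v,x)_j|\le C|v|_G$ for each $j$, hence $|\hat\a(v,x)|_H\le DC|v|_G$. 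Measurability of $(v,x)\mapsto\hat\a(v,x)$ is clear, since each $\{x:\ v\in P_x\}=T^{-v}U$ is measurable, $x\mapsto u_x$ and $x\mapsto h_x^{(j)}(p)$ are measurable, and the supremum ranges over the countable set $\bbZ^d$. Running the identical construction for $\b$ over $(Y,\nu,S)$ completes the proof.

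\textbf{Main obstacle.} The two points that are not purely formal are exactly those special to the abelian, metric setting: arranging that the extension is an honest \emph{cocycle} and not merely a bounded measurable extension — which is why the sup-convolution formula must be written equivariantly, with no ad hoc choice of representative \emph{inside} the supremum — and keeping the extension \emph{integer}-valued, so that one never rounds (rounding would destroy the cocycle identity); this is exactly why $C$ is taken to be an integer.
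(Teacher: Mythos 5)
Your argument is correct and follows essentially the same route as the paper's proof: both view the partial cocycle orbit-wise as a $C$-Lipschitz function on the return set $P_x$ and extend it by a Lipschitz (inf/sup-convolution) extension, the abelian structure then giving the full cocycle identity. The only differences are cosmetic — you take $C$ an integer so the McShane extension stays $\bbZ$-valued (the paper instead rounds, paying $C+2$), and you define the extension at every point via a basepoint normalization rather than first on $U$ and then transporting along the orbit.
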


\begin{proof}
It suffices to show that any bounded $\bbZ^D$-valued partial cocycle over $(X,\mu,T)$ can be extended to a bounded cocycle $\bbZ^d\times X \to \bbZ^D$.  Arguing coordinate-wise it suffices to prove this when $D=1$.  Thus, let $(\a,U)$ be a $\bbZ$-valued partial cocycle over $(X,\mu,T)$, and assume that $|\a(v,x)| \leq C|v|$ for $\mu$-a.e. $x \in U\cap T^{-v}U$, for all $v \in \bbZ^d$.

For each $x \in X$ let
\[D_x := \{v \in \bbZ^d:\,T^vx \in U\},\]
the $U$-return set of $x$.  Since $(X,\mu,T)$ is ergodic and $\mu(U) > 0$, this $D_x$ is nonempty for almost every $x$.  By removing a negligible set, we may assume this holds for strictly every $x$.

Now consider $x \in U$, so $0 \in D_x$.  Then the assumed boundedness of $\a$ is equivalent to the assertion that the map
\[D_x\to \bbZ:v\mapsto \a_x(v)\]
is $C$-Lipschitz for the restriction of $|\cdot|$ to $D_x$.  We may therefore apply a standard construction to extend it to a $C$-Lipschitz map from the whole of $\bbZ^d$ to $\bbR$, and then apply some rounding to produce a $\bbZ$-valued function.  To be specific, for $u \in \bbZ^d$, let us define
\[\s^0_x(u) := \big\lfloor \min\{\a_x(v) + C\|u-v\|:\ v \in D_x\}\big\rfloor,\]
where $\lfloor\cdot\rfloor$ is the integer-part function.  This is $(C+2)$-Lipschitz, where the extra `$2$' allows for the rounding.  It extends $\a_x$, and it satisfies the following slightly extended cocycle identity:
\begin{equation}\label{eq:ext-cocyc}
\s^0_x(u+w) = \s^0_x(u) + \s^0_{T^ux}(w) = \a_x(u) + \s^0_{T^ux}(w) \quad \hbox{whenever}\ x,T^ux \in U.
\end{equation}

Finally, the cocycle equation tells us how to extend $\s^0$ further to a function on the whole of $\bbZ^d \times X$. For each $x\in X$, choose some $v \in D_x$, and let
\begin{equation}\label{eq:sigma-def}
\s_x(u) := \s^0_{T^vx}(u-v) - \s^0_{T^vx}(-v).
\end{equation}
A re-arrangement using equation~(\ref{eq:ext-cocyc}) shows that this right-hand side does not depend on $v$, so $\s_x(u)$ is well-defined.  If $x \in U$ then we may use the choice $v = 0$, which shows that $\s$ does indeed extend $\s^0$.  The new function $\s_x$ is still $(C+2)$-Lipschitz on $\bbZ^d$ for each $x$ because $\s^0_{T^vx}$ has that property.

It remains to verify the cocycle identity for $\s$.  Suppose that $x \in X$ and $u,w \in \bbZ^d$, and choose $v \in D_x$.  It follows that $v - w \in D_{T^wx}$.  Therefore, using these two points in the right-hand side of~(\ref{eq:sigma-def}), we obtain
\begin{align*}
	\s_x(u+w) &= \s^0_{T^vx}(u+w-v) - \s^0_{T^vx}(-v)\\
	&= \s^0_{T^{v-w}(T^wx)}(u-(v-w)) - \s^0_{T^{v-w}(T^wx)}(-(v-w))\\ &\quad + \s^0_{T^vx}(w-v) - \s^0_{T^vx}(-v)\\
	&= \s_{T^wx}(u) + \s_x(w),
\end{align*}
as required.
\end{proof}

\begin{proof}[Proof of Theorem B]
By the preceding lemma, it suffices to assume that
\[\Phi:(X,\mu,T)\rightarrowtail (Y,\nu,S)\]
is an SSOE$_1$.  By considering $\Phi^{-1}$ instead if necessary, we may assume that $\comp(\Phi) \leq 1$.

This SSOE$_1$ between the systems implies an integrable measure equivalence between the two groups, by Corollary~\ref{cor:int-Furman}.  As shown by Lewis Bowen in~\cite[Theorem B.2]{Aus--nilpIME}, this requires that they have the same growth, and hence $D = d$.

Let $U := \dom \Phi$ and $V := \img \Phi$, and let $\a:\bbZ^d\times Y\to \bbZ^d$ be an integrable cocycle such that $(\a,U)$ describes $\Phi$.

Since $\comp(\Phi) \leq 1$, we have $\nu(V) \leq \mu(U)$.  Choose a measurable subset $W\subseteq Y$ such that $W\supseteq V$ and $\nu(V)/\nu(W) = \mu(U)$. By~\cite[Proposition 2.7]{Furman99}, $\Phi$ has an extension to an isomorphism $\t{\Phi}$ between the relations $\R_T$ and $\R_S\cap (W\times W)$: that is, $\t{\Phi}$ is a SOE which extends $\Phi$, whose domain is the whole of $X$, and whose image is $W$.  It has the same compression as $\Phi$.  Since $\dom\t{\Phi} = X$, it is described by a cocycle $\s:\bbZ^d\times X\to \bbZ^d$ such that $\s_{|U} = \a_{|U}$ and such that $\s_x$ is an injection for a.e. $x$.  It does not follow that $\s$ is integrable, but since $\s_{|U} = \a_{|U}$, the second part of Proposition~\ref{prop:cocyc-extend} promises that $\s$ is cohomologous to $\a$, say
\[\s(v,x) = \a(v,x) + \g(T^vx) - \g(x)\]
for some $\g:X\to \bbZ^d$.

Next, since $\a$ is integrable, the cocycle equation and the pointwise ergodic theorem give that
\begin{equation}\label{eq:PET}
\frac{\a(ne_i,x)}{n} = \frac{1}{n}\sum_{j=0}^{n-1}\a(e_i,T^{je_i}x) \to v_i := \int \a(e_i,x)\,\mu(dx) \quad \hbox{as}\ n\to\infty
\end{equation}
for $\mu$-almost every $x$ and for $i=1,2,\dots,d$. Let $M$ be the $(d\times d)$-matrix whose columns are the vectors $v_i$.

We now show that the SOE $\t{\Phi}$ is an $M$-Kakutani equivalence for this $M$. We have guaranteed condition (i) by construction, and we finish the proof by showing condition (ii)$'$ instead of (ii).  Given $\eps > 0$, choose $r_\eps < \infty$ so large that the set
\[B_\eps := \{x:\ |\g(x)| \leq r_\eps\}\]
has $\mu(B_\eps) > 1-\eps/2$. Now choose $N_\eps$ so large that
\[r_\eps < \eps N_\eps/2\]
and so that the set
\[C_\eps := \big\{x:\ |\a(ne_i,x) - nv_i| < \eps n/2 \ \ \forall n\geq N_\eps\ \forall i=1,2,\dots,d\big\}\]
has $\mu(C_\eps) > 1-\eps/2$; this is possible because of~(\ref{eq:PET}).  Finally, let $A_\eps := B_\eps \cap C_\eps$.  Then $\mu(A_\eps) > 1-\eps$, and for any $n \geq N_\eps$ and $x \in A_\eps \cap T^{-ne_i}A_\eps$ we obtain
\[|\s(ne_i,x) - nMe_i| \leq |\a(ne_i,x) - nv_i| + |\g(x)| + |\g(T^{ne_i}x)| < \eps n/2 + 2r_\eps < \eps n.\]
\end{proof}

\begin{cor}
The conclusion of Theorem A holds if $G$ and $H$ are virtually Euclidean.
\end{cor}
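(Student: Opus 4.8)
The plan is to bootstrap from the Euclidean case treated in Theorem~B, using Proposition~\ref{prop:pass-to-subgps} to descend to finite-index subgroups and then the known behaviour of entropy under Kakutani equivalence of $\bbZ^d$-systems. First I would choose finite-index subgroups $G_0\leq G$ and $H_0\leq H$ with $G_0\cong \bbZ^d$ and $H_0\cong \bbZ^D$. Replacing each by the intersection of its finitely many $G$- (resp.\ $H$-) conjugates, we may assume $G_0\unlhd G$ and $H_0\unlhd H$; this costs nothing, since a finite-index subgroup of $\bbZ^d$ is again free abelian of rank $d$. Note also that $G$ and $H$, being virtually Euclidean, are amenable, so Theorem~A, Theorem~B and Proposition~\ref{prop:pass-to-subgps} all apply.

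Now let $\Phi:(X,\mu,T)\rightarrowtail(Y,\nu,S)$ be a SOE$_\infty$ or SSOE$_1$ with $U=\dom\Phi$ and $V=\img\Phi$. Applying Proposition~\ref{prop:pass-to-subgps} with $G_1:=G_0$ and $H_1:=H_0$ produces free ergodic systems $(X_1,\mu_1,T_1)$ of $G_0\cong\bbZ^d$ and $(Y_1,\nu_1,S_1)$ of $H_0\cong\bbZ^D$, a SSOE$_1$ (resp.\ SOE$_\infty$) $\Phi_1$ between them, and the relations~(\ref{eq:old-ent-new-ent}) and~(\ref{eq:old-comp-new-comp}). Replacing $\Phi_1$ by $\Phi_1^{-1}$ if necessary, assume $\comp(\Phi_1)\leq 1$. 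Theorem~B (its first assertion converting a SOE$_\infty$ into a SSOE$_1$ if needed) now gives $d=D$, and its proof exhibits $(X_1,\mu_1,T_1)$ and $(Y_1,\nu_1,S_1)$ as $M$-Kakutani equivalent, where $M$ is the real $(d\times d)$-matrix whose columns are the ergodic averages $\int\a(e_i,x)\,\mu_1(dx)$ of an integrable extension $\a$ of the cocycle describing $\Phi_1$. In this $M$-Kakutani equivalence the domain is all of $X_1$ while the image has $\nu_1$-measure $\comp(\Phi_1)$; a standard ergodic-theorem density computation, carried out in~\cite{delJRud84}, identifies this measure with $1/|\det M|$.

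Finally I would invoke the Abramov-type entropy formula for $\bbZ^d$-systems from~\cite{delJRud84}: an $M$-Kakutani equivalence satisfies $\rmh(\mu_1,T_1)=|\det M|\cdot\rmh(\nu_1,S_1)$, whence $\comp(\Phi_1)\cdot\rmh(\mu_1,T_1)=\rmh(\nu_1,S_1)$. Substituting~(\ref{eq:old-ent-new-ent}) and~(\ref{eq:old-comp-new-comp}) into this, the index factors $[G:G_0]$ and $[H:H_0]$ cancel and we are left with
\[\frac{\nu(V)}{\mu(U)}\,\rmh(\mu,T)=\rmh(\nu,S),\]
that is, $\mu(U)^{-1}\rmh(\mu,T)=\nu(V)^{-1}\rmh(\nu,S)$, which is the conclusion of Theorem~A for $G$ and $H$.

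The step I expect to be the main obstacle is not a computation but the careful importing of the external input: one must check that the version of $M$-Kakutani equivalence produced by the proof of Theorem~B (domain all of $X_1$, image of measure $\comp(\Phi_1)$, with the averaged cocycle playing the role of $M$) is precisely the one to which the results of~\cite{delJRud84} apply, and in particular pin down the identity $|\det M|=\comp(\Phi_1)^{-1}$, so that the scaling constant in the entropy formula is exactly the compression of $\Phi_1$. Once that dictionary is in place, the remainder is just the bookkeeping with the finite-index factors above.
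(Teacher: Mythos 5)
Your proposal is correct and follows essentially the same route as the paper: descend to normal finite-index Euclidean subgroups via Proposition~\ref{prop:pass-to-subgps}, apply Theorem B to get an $M$-Kakutani equivalence with $\comp(\Phi_1)=1/|\det M|$, invoke the Kakutani entropy formula from~\cite{delJRud84,Has92}, and cancel the index factors using~(\ref{eq:old-ent-new-ent}) and~(\ref{eq:old-comp-new-comp}). The only (inessential) difference is organizational: the paper first states the strictly Euclidean case separately and then reduces the virtually Euclidean case to it, whereas you run the reduction and the Euclidean argument in one pass.
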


\begin{proof}
If $G$ and $H$ are strictly Euclidean, then Theorem B reduces this to the corresponding result for Kakutani equivalence.  By the explanation which follows the proof of Proposition 3 in~\cite{delJRud84}, the matrix $M$ constructed in the proof of Theorem B must satisfy
\[\comp(\Phi) = \comp(\t{\Phi}) = \frac{1}{|\det M|}.\]
Now the desired result follows from the equation
\[\rmh(\nu,S) = \frac{\rmh(\mu,T)}{|\det M|},\]
which is recalled on the last page of~\cite{delJRud84} (beware that this equation also appears at the bottom of p91 of their paper, but written incorrectly).  Del Junco and Rudolph attribute this equation to an unpublished work of Nadler, but the special case of $\rm{id}$-Kakutani equivalence is included as~\cite[Corollary 3]{Has92}, and the general case is proved in the same way.

Now suppose that $G_1 \unlhd G$ and $H_1 \unlhd H$ are finite-index subgroups isomorphic to Euclidean lattices.  Let $(X_1,\mu_1,T_1)$, $(Y_1,\nu_1,S_1)$ and $\Phi_1$ be the systems and SOE given by Proposition~\ref{prop:pass-to-subgps}.  Then the special case of Euclidean groups gives that
\[\rmh(\nu_1,S_1) = \comp(\Phi_1)\rmh(\mu_1,T_1),\]
and now the equations~(\ref{eq:old-ent-new-ent}) and~(\ref{eq:old-comp-new-comp}) turn this into the desired conclusion.
\end{proof}

\begin{rmk}
	Beyond Kakutani equivalence for $\bbZ^d$-actions, Kammeyer and Rudolph have developed a very abstract notion of `restricted orbit equivalences' between actions of discrete amenable groups: see~\cite{KamRud97,KamRud02}.  I do now know whether OE$_1$ or SSOE$_1$ are examples of restricted orbit equivalences, but if so then their machinery would have several consequences in our setting, such as an analog of Ornstein theory. \fin
\end{rmk}

\begin{ques}
	Is it true that SOE$_\infty$ implies SSOE$_1$ between actions of other finitely generated amenable groups? Does Kakutani equivalence imply either?
	\fin
\end{ques}

\section{Proof of the entropy formula using derandomization}\label{sec:AfromC}

This section returns to the setting of general amenable-group actions.  It derives Theorem A from Theorem D.  The more difficult work of proving Theorems C and D occupies the rest of the paper after this.

Theorem D leads to Theorem A via the following.

\begin{prop}\label{prop:rel-ent-pres}
Let
\begin{center}
$\phantom{i}$\xymatrix{
(X,\mu,T) \ar@{>->}^\Phi[r] \ar_\pi[d] & (Y,\nu,S) \ar^\xi[d]\\
(X',\mu',T') \ar@{>->}_{\Phi'}[r] & (Y',\nu',S')
}
\end{center}
be a commutative diagram whose rows are SOEs, whose left column is a factor map of free $G$-systems, and whose right column is a factor map of free $H$-systems.  Then the relative entropies over those factor maps satisfy
\[\mu(\dom\Phi)^{-1}\rmh(\mu,T\,|\,\pi) = \nu(\img \Phi)^{-1}\rmh(\nu,S\,|\,\xi).\]
\end{prop}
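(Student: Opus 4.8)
The plan is to identify both $\mu(\dom\Phi)^{-1}\rmh(\mu,T\,|\,\pi)$ and $\nu(\img\Phi)^{-1}\rmh(\nu,S\,|\,\xi)$ with a single quantity attached to the common orbit-equivalence-relation-together-with-factor, so that the two relative entropies must stand in the proportion $\comp(\Phi)=\nu(\img\Phi)/\mu(\dom\Phi)$. The point of relativizing over $\pi$ is that, unlike in Theorem~A, the rearrangement cocycle becomes \emph{deterministic}: commutativity of the square forces the cocycle of $\Phi$ to factor through $\pi$, and it is precisely this $\pi$-measurability that lets the Rudolph--Weiss entropy machinery be run ``over the factor'' with no tail hypothesis.

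The first step is the cocycle computation. If $(\a,U)$ and $(\a',U')$ describe $\Phi$ and $\Phi'$, apply the factor map $\xi$ to the defining relation $\Phi(T^gx)=S^{\a(g,x)}\Phi(x)$, use that $\xi$ intertwines $S$ with $S'$ and that $\xi\circ\Phi=\Phi'\circ\pi$ on $\dom\Phi$, and then cancel using freeness of $S'$; this yields $\a(g,x)=\a'(g,\pi(x))$ for a.e.\ $x\in U\cap T^{g^{-1}}U$, and symmetrically $\b(h,y)=\b'(h,\xi(y))$. In particular $\dom\Phi=\pi^{-1}(\dom\Phi')$ forces $\mu(\dom\Phi)=\mu'(\dom\Phi')$ and $\nu(\img\Phi)=\nu'(\img\Phi')$, so that $\comp(\Phi)=\comp(\Phi')$.

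The second step is to reduce the stable orbit equivalence to a genuine one and transport everything onto a single space. Assuming $\comp(\Phi)\le 1$, enlarge $\img\Phi$ to a set $W$ with $\nu(\img\Phi)/\nu(W)=\mu(\dom\Phi)$ and extend $\Phi$ to an isomorphism of $\R_T$ onto $\R_S\cap(W\times W)$ by \cite[Proposition 2.7]{Furman99}, exactly as in the proof of Theorem~B; carrying out the same procedure downstairs extends $\Phi'$ compatibly, so the commuting square is preserved and the transported action is a free ergodic $G$-action on $(W,\nu_{|W})$ sharing its orbit relation with $S$ restricted to $W$. By the first step, the cocycle rearranging this transported $G$-action into $S$ on $W$ is measurable with respect to the image of the factor $\pi$, which is the factor cut out by $\xi$ on $W$.

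Finally, the heart of the matter is a relative form of the Rudolph--Weiss theorem: for a free ergodic probability-preserving action of an amenable group, the Kolmogorov--Sinai entropy relative to a factor $\mathcal{F}$ depends only on the orbit relation together with $\mathcal{F}$, provided the cocycle rearranging the action into a reference action is $\mathcal{F}$-measurable, and moreover it obeys the Abramov-type rescaling $\rmh(\mu_{|A},\cdot\,|\,\mathcal{F})=\mu(A)^{-1}\rmh(\mu,\cdot\,|\,\mathcal{F})$ under restriction to a positive-measure set $A$. Granting this, the two normalized relative entropies in the proposition are each equal to the relative entropy of the common relation over the common factor, computed on the common space with its renormalized probability measure, which finishes the proof. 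I expect the genuine obstacle to be establishing this relative Rudolph--Weiss statement and the relative Abramov rescaling: one must re-run the Ornstein--Weiss Rokhlin-tower and $\eps$-coding arguments relative to $\mathcal{F}$, using $\mathcal{F}$-measurability of the rearrangement cocycle to see that the usual fibrewise comparison of Shannon entropies survives; by contrast the cocycle computation and the passage to a common space are routine bookkeeping.
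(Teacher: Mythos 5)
Your opening cocycle computation is correct and isolates the right mechanism: commutativity forces $\a(g,x)=\a'(g,\pi(x))$, so the rearrangement data is measurable with respect to the factor, which is exactly why a Rudolph--Weiss argument is available with no tail hypotheses. But the proof stops where the work begins, and the two facts you defer have very different status. The ``relative form of the Rudolph--Weiss theorem'' is not something to re-prove by re-running Ornstein--Weiss coding: it is precisely the cited result \cite[Theorem 2.6]{RudWei00}, whose statement is already about entropy relative to a common invariant $\s$-algebra with respect to which the orbit-change cocycle is measurable. The genuine gap is the second ingredient, the ``relative Abramov rescaling'' $\rmh(\mu_{|A},\cdot\,|\,\mathcal{F})=\mu(A)^{-1}\rmh(\mu,\cdot\,|\,\mathcal{F})$. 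As stated it is not even well posed for general amenable $H$: after transporting $T$ onto $W\subseteq Y$, the object on the $H$-side is no longer an action but only the restricted relation $\R_S\cap(W\times W)$, so ``its KS entropy relative to $\xi$'' has no meaning without first building an entropy theory for restricted relations relative to a factor; and if you instead define that quantity via the transported $G$-action, then the rescaling claim becomes essentially a restatement of the proposition itself, so ``granting it'' is close to assuming what is to be proved. Moreover, Rudolph--Weiss compares two \emph{actions} with the same orbits on the same space, so it cannot be invoked directly once one side is only a restricted relation. Your Step 2 compatibility claim (extending $\Phi$ and $\Phi'$ via \cite[Proposition 2.7]{Furman99} so that the square still commutes) is also not automatic, since that extension is non-canonical; it can be arranged by building the downstairs extension first and lifting through $\xi$ using freeness, but it needs an argument.

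The missing bridge is supplied by reducing to $\bbZ$, where an honest Abramov formula exists. By Connes--Feldman--Weiss \cite{ConFelWei81} choose a single transformation $R'$ on $X'$ with the same orbits as $T'$ and lift its cocycle through $\pi$ to get $R$ on $X$ with the same orbits as $T$ (Lemma~\ref{lem:relCFW}); since the orbit-change cocycle is $\pi$-measurable, \cite[Theorem 2.6]{RudWei00} gives $\rmh(\mu,R\,|\,\pi)=\rmh(\mu,T\,|\,\pi)$, and doing the same on the right reduces everything to $G=H=\bbZ$. There the SOE is an honest OE between the induced transformations $T_U$ and $S_V$, lifted from the one downstairs so that its cocycle is factor-measurable; Rudolph--Weiss applies once more, and the factor $\mu(U)^{-1}$ comes from the classical Abramov formula \cite{Abr59} together with Abramov--Rokhlin \cite{AbrRok62}, with no new relative theory needed. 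If you want to keep your more direct route for general amenable groups, you must actually develop the relative induced-entropy theory you allude to, which is a substantial undertaking rather than bookkeeping.
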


This result may already be known, but I have not found a suitable reference in the literature.  It may be a consequence of Danilenko's quite abstract results in~\cite[Section 2]{Dan01}, but it seems worth including a more classical proof.  A simple approach, suggested to me by Lewis Bowen, is based on the following lemma.

\begin{lem}\label{lem:relCFW}
Let $\pi:(X,\mu,T)\to (X',\mu',T')$ be a factor map of free $G$-systems.  Then there is a commutative diagram
\begin{center}
$\phantom{i}$\xymatrix{
(X,\mu,R) \ar^{\id_X}[r] \ar_\pi[d] & (X,\mu,T) \ar^\pi[d]\\
(X',\mu',R') \ar_{\id_{X'}}[r] & (X',\mu',T')
}
\end{center}
in which $R$ and $R'$ are single transformations and $\id_X$ and $\id_{X'}$ are OEs (equivalently, $R$ and $T$ have the same orbits and $R'$ and $T'$ have the same orbits).
\end{lem}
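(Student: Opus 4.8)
The plan is to build $R'$ on the base system first, by the Connes--Feldman--Weiss theorem, and then lift it along $\pi$ in the unique way compatible with the factor map; the crucial point will be that, because \emph{both} systems are free, this lift automatically recaptures all of $\R_T$ and no separate ``fibrewise'' contribution is needed.

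First, since $G$ is amenable, Zimmer's theorem gives that $\R_{T'}$ is amenable, hence hyperfinite by the Connes--Feldman--Weiss theorem, so there is a $\mu'$-preserving transformation $R'$ of $(X',\mu')$ with $\R_{R'} = \R_{T'}$ modulo $\mu'$. Freeness of $T'$ lets us write $R'x' = (T')^{\phi(x')}x'$ for a measurable map $\phi:X'\to G$, and I would then define
\[Rx := T^{\phi(\pi x)}x \qquad (x\in X).\]
This is a Borel automorphism (its inverse is $y\mapsto T^{-\phi((R')^{-1}\pi y)}y$), and $\pi\circ R = R'\circ\pi$ holds by construction. Since $Rx$ always lies in the $T$-orbit of $x$, the transformation $R$ belongs to the full group of $\R_T$; as $\R_T$ is probability-measure-preserving, $R$ is therefore $\mu$-preserving, so that $\pi:(X,\mu,R)\to(X',\mu',R')$ is a genuine factor map of $\bbZ$-systems.

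It then remains to check that $\R_R = \R_T$ modulo $\mu$; the inclusion $\R_R\subseteq\R_T$ is immediate from $Rx\in[x]_{\R_T}$. For the reverse I would use freeness of both actions: if $T^gx$ and $T^hx$ have the same $\pi$-image, then $\pi x$ is fixed by $(T')^{h^{-1}g}$, so $g=h$ by freeness of $T'$; thus $\pi$ is injective on each $T$-orbit, while $G$-equivariance makes it map the $T$-orbit of $x$ onto the $T'$-orbit of $\pi x$. On the other hand, $\pi R^n = (R')^n\pi$ shows that $\pi$ carries the $R$-orbit of $x$ onto the $R'$-orbit of $\pi x$, which coincides with the $T'$-orbit of $\pi x$ because $\R_{R'}=\R_{T'}$. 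Hence $\pi$ maps both $[x]_{\R_R}$ and $[x]_{\R_T}$ onto the same set; since $[x]_{\R_R}\subseteq[x]_{\R_T}$ and $\pi$ is injective on $[x]_{\R_T}$, we conclude $[x]_{\R_R}=[x]_{\R_T}$ for $\mu$-a.e.\ $x$. With this in hand, $\id_X$ and $\id_{X'}$ are OEs, $\pi$ intertwines $R$ with $R'$, and the square commutes since $\pi\circ\id_X = \pi = \id_{X'}\circ\pi$.

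The step carrying the argument is recognizing that $R = T^{\phi\circ\pi}$ — which a priori shuffles only one point per $\pi$-fibre — already generates all of $\R_T$; this is exactly where freeness of \emph{both} systems is essential, as it forces $\pi$ to be injective on $T$-orbits and so removes the fibrewise part of $\R_T$ that a general lift would have to fill in. By contrast, the measure-preservation of $R$, which might look like the delicate point, comes for free once one observes that $R$ lies in the full group of the p.m.p.\ relation $\R_T$.
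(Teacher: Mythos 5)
Your proposal is correct and takes essentially the same route as the paper: apply Connes--Feldman--Weiss to the base system to obtain $R'$ with the same orbits as $T'$, use freeness of $T'$ to write $R'x' = (T')^{\phi(x')}x'$, and lift by setting $Rx := T^{\phi(\pi x)}x$. The paper ends with that definition, so your checks that $R$ preserves $\mu$ and that $\R_R = \R_T$ (via injectivity of $\pi$ on $T$-orbits, forced by freeness of both systems) simply supply details the paper leaves to the reader.
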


\begin{proof}
By the main result of~\cite{ConFelWei81}, there is a single $\mu$-preserving transformation $R'$ on $X'$ which has the same orbits as the action $T'$.  Since $(X',\mu',T')$ is free, this implies the existence of a unique cocycle $\a:X'\to G$ such that
\[R'x' = (T')^{\a(x')}x' \quad \hbox{for}\ x' \in X'.\]
The proof is completed by defining
\[Rx := T^{\a(\pi(x))}x \quad \hbox{for}\ x \in X.\]
\end{proof}

Lemma~\ref{lem:relCFW} enables us to convert $G$- and $H$-actions into $\bbZ$-actions, for which stable orbit equivalence is easier to understand.  For $\bbZ$-actions, stable orbit equivalence is simply an orbit equivalence between induced transformations, whose entropy is computed by Abramov's formula.

\begin{proof}[Proof of Proposition~\ref{prop:rel-ent-pres}]
Let $U:= \dom\Phi$, $U' := \dom\Phi'$, $V := \img \Phi$ and $V' := \img\Phi'$.  Our assumptions include that $U = \pi^{-1}U'$ and $V = \xi^{-1}V'$.

First we invoke Lemma~\ref{lem:relCFW} on the left-hand side of the diagram in the statement of Proposition~\ref{prop:rel-ent-pres}.  This produces the larger diagram
\begin{center}
$\phantom{i}$\xymatrix{
(X,\mu,R) \ar^{\id_X}[r] \ar_\pi[d] & (X,\mu,T) \ar@{>->}^\Phi[r] \ar_\pi[d] & (Y,\nu,S) \ar^\xi[d]\\
(X',\mu',T') \ar_{\id_{X'}}[r] & (X',\mu',T') \ar@{>->}_{\Phi'}[r] & (Y',\nu',S').
}
\end{center}
By~\cite[Theorem 2.6]{RudWei00}, the left-hand square above gives the equality
\begin{eqnarray}\label{eq:RW}
\rmh(\mu,R\,|\,\pi) = \rmh(\mu,T\,|\,\pi).
\end{eqnarray}

Now composing the rows of this diagram, it collapses to
\begin{center}
$\phantom{i}$\xymatrix{
(X,\mu,R) \ar@{>->}^\Phi[r] \ar_\pi[d] & (Y,\nu,S) \ar^\xi[d]\\
(X',\mu',R') \ar@{>->}_{\Phi'}[r] & (Y',\nu',S').
}
\end{center}
In view of~(\ref{eq:RW}), it suffices to show that this diagram implies the equality
\[\mu(U)^{-1}\rmh(\mu,R\,|\,\pi) = \nu(V)^{-1}\rmh(\nu,S\,|\,\xi):\]
that is, we have reduced the desired proposition to the case $G = \bbZ$.

Applying Lemma~\ref{lem:relCFW} in the same way on the right-hand side of the diagram, we may reduce to the case in which $G = H = \bbZ$, and so $T$ and $S$ may be regarded as single transformations.  However, in this case $\Phi$ (resp. $\Phi'$) is an OE between the induced transformations $T_U$ and $S_V$ (resp. $T'_{U'}$ and $S'_{V'}$), and so another appeal to~\cite[Theorem 2.6]{RudWei00} gives
\[\rmh\big(\mu_{|U},T_U\,\big|\,\pi|U\big) = \rmh\big(\nu_{|V},S_V\,\big|\,\xi|V\big).\]
Finally, Abramov's formula for the entropy of induced transformations~\cite{Abr59} and the Abramov-Rokhlin formula for the entropy of an extension~\cite{AbrRok62} give
\begin{eqnarray*}
\rmh\big(\mu_{|U},T_U\,\big|\,\pi|U\big) &=& \rmh\big(\mu_{|U},T_U\big) - \rmh\big(\mu'_{|U'},T'_{U'}\big)\\
&=& \mu(U)^{-1}\big(\rmh(\mu,T) - \rmh(\mu',T')\big)\\
&=& \mu(U)^{-1}\rmh(\mu,T\,|\,\pi),
\end{eqnarray*}
and similarly for $\rmh\big(\nu_{|V},S_V\,\big|\,\xi|V\big)$.
\end{proof}

\begin{proof}[Completed proof of Theorem A, given Theorem D]
First suppose that either $G$ or $H$ has linear growth. Lewis Bowen has shown in~\cite[Theorem B.2]{Aus--nilpIME} that growth type is an invariant of integrable measure equivalence, so this implies that they both have linear growth, and hence they are both virtually $\bbZ$.  So in this case the result follows from Section~\ref{sec:Ab}.

Now suppose that both groups have super-linear growth.  Let $\Phi:(X,\mu,T)\rightarrowtail (Y,\nu,S)$ be either a SSOE$_1$ or a SOE$_\infty$, and let $c := \comp(\Phi)$. In this case Theorem D gives a positive-measure subset $U \subseteq \dom\Phi$ and a diagram of the form
\begin{center}
$\phantom{i}$\xymatrix{
(X,\mu,T) \ar@{>->}^{\Phi|U}[r] \ar_\pi[d] & (Y,\nu,S) \ar^\xi[d]\\
(X',\mu',T') \ar@{>->}_{\Phi'}[r] & (Y',\nu',S'),
}
\end{center}
where $\rmh(\mu',T') < \eps$ and all the systems are free.  It follows that
\[c = \frac{\nu(\Phi(U))}{\mu(U)} = \frac{\nu'(\img\Phi')}{\mu'(\dom\Phi')}.\]
We now combine Proposition~\ref{prop:rel-ent-pres} with Ward and Zhang's generalization of the Abramov--Rokhlin formula to extensions of amenable-group actions~\cite[Theorem 4.4]{WarZha92}.  This gives
\begin{eqnarray*}
\rmh(\nu,S) &\geq& \rmh(\nu,S\,|\,\xi)\\
&=& c\rmh(\mu,T\,|\,\pi)\\
&=& c\big(\rmh(\mu,T) - \rmh(\mu',T')\big)\\
&\geq& c\big(\rmh(\mu,T) - \eps\big).
\end{eqnarray*}
Since $\eps > 0$ was arbitrary, it follows that
\[\rmh(\nu,S) \geq c\rmh(\mu,T),\]
and the reverse inequality holds by symmetry.
\end{proof}

\section{Subrelations, graphings and a new notion of cost}\label{sec:graphings}

Most of the rest of the paper will go towards proving Theorem D.  The next step is to introduce some more kinds of structure that will be used during the proof.

\subsection{Graphings}

Let $(X,\mu,T)$ be a $G$-system and $\R_T \subseteq X\times X$ its orbit equivalence relation.  In this setting, we need some definitions related to graphings and their costs.  Graphings go back to Adams' paper~\cite{Ada90}, and cost to Levitt's work~\cite{Lev95}.  These constructions have since become very important to the study of Borel equivalence relations: see, for instance, Gaboriau's survey~\cite{Gab02}.

In order to study integrable orbit equivalence, we need to work with graphings that are always defined with reference to the given $G$-action, and then with a modified notion of cost that accounts for the lengths of elements of $G$.  We therefore adjust the older definitions in the following way.  A \textbf{$T$-graphing} is a family $\G = (A_g)_{g \in G}$ of measurable subsets of $X$ indexed by $G$ satisfying
\begin{equation}\label{eq:symmetry}
A_{g^{-1}} = T^gA_g \quad \forall g \in G.
\end{equation}
The associated graphing in Levitt's sense is the family of partial maps $T^g|A_g:A_g\to T^gA_g$.

The \textbf{vertex set} of a graphing $\G$ is $\rm{Vert}(\G) := \bigcup_g A_g$, and $\G$ is \textbf{nontrivial} if this set has positive $\mu$-measure.  If $\rm{Vert}(\G) = V$, we may regard $\G$ as placing the structure of a graph on each of the equivalence classes in $\R_T\cap (V\times V)$, where $x$ and $T^gx$ are joined by an edge if $x \in A_g$.  Condition~(\ref{eq:symmetry}) is equivalent to this set of edges being symmetric, so we may regard this graph as undirected.

The \textbf{equivalence relation} generated by a $T$-graphing $\G$ is the smallest Borel equivalence relation which contains $(x,T^gx)$ whenever $g\in G$ and $x \in A_g$.  It is denoted by $\R_\G$.

\begin{dfn}
A $T$-graphing $\G$ is \textbf{orbit-wise connected} if
\[\R_\G\cap (V_0\times V_0) = \R_T\cap (V_0\times V_0)\]
for some $V_0\subseteq \Vert(\G)$ with $\mu(\Vert(\G)\setminus V_0) = 0$.  Equivalently, this asserts that for $\mu$-a.e. $x \in \Vert(\G)$, the edges of $\R_\G$ define a connected graph on the set $T^Gx \cap V_0$.
\end{dfn}

The factor of $(X,\mu,T)$ \textbf{generated} by the graphing $\G = (A_g)_g$ is simply the smallest factor which contains all the sets $A_g$.  We write $\rmh(\mu,T,\G)$ for the KS entropy of this factor.

\subsection{Graphings and partial cocycles}

Now suppose that $(\a,U)$ is an $H$-valued partial cocycle over $(X,\mu,T)$ and that $\G = (A_g)_g$ is a $T$-graphing.  Let $V:= \Vert(\G)$, and assume that $V \subseteq U$.  In view of the relation~(\ref{eq:symmetry}), this implies
\[\hbox{both}\quad A_g \subseteq U \quad\hbox{and}\quad A_{g^{-1}} = T^gA_g \subseteq U \quad \forall g \in G,\]
so in fact $A_g \subseteq U\cap T^{g^{-1}}U$ for every $g$.
We may therefore define the \textbf{restriction of $\a$ to $\G$} to be the restriction of $\a$ to the subset
\[\{(g,x):\ g \in G\ \hbox{and}\ x \in A_g\}.\]
Denote it by $\a|\G$.  If $\G$ is the `na\"\i ve' graphing defined by $A_g := V \cap T^{g^{-1}}V$ for every $g$, then this agrees with our previous definition of $\a_{|V}$. The factor \textbf{generated} by $\a|\G$ is the factor $\A$ generated by all the partial observables $(\a^g|A_g,A_g)$, $g \in G$, and its entropy is
\[\rmh(\mu,T,\a|\G) := \rmh(\mu,T,\A).\]

\begin{lem}\label{lem:cocyc-gen}
If $\G$ is an orbit-wise connected $T$-graphing, and $V:= \Vert(\G)$, then the factor generated by $\a|\G$ contains the factor generated by $(\a_{|V},V)$ up to negligible sets.
\end{lem}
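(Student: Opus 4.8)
The plan is to unwind both factors to their explicit generating sets and then to transfer membership from one to the other along paths in the graphing. Write $\A$ for the factor generated by $\a|\G$; by definition it is generated modulo $\mu$ by $V := \Vert(\G)$, by the sets $A_{g,h} := \{x\in A_g:\ \a(g,x)=h\}$, and by all their $T$-translates. In particular each $A_g$ lies in $\A$, hence so do $V = \bigcup_{g}A_g$ and each $V\cap T^{g^{-1}}V$. Since the factor generated by $(\a_{|V},V)$ is generated by $V$, the sets $V_{g,h} = \{x\in V\cap T^{g^{-1}}V:\ \a(g,x)=h\}$, and their $T$-translates, and since $\A$ is $T$-invariant, it is enough to show: for every $g\in G$, the observable $\a^g$ agrees $\mu$-a.e.\ on the $\A$-set $V\cap T^{g^{-1}}V$ with an $\A$-measurable function. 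Fix a conull $V_0\subseteq \Vert(\G)$ on which orbit-wise connectedness holds; we may take $V_0$ to be $\A$-measurable since $\Vert(\G)\in\A$.

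The main step is an induction on the length of paths in $\R_\G$. For $n\ge 0$ and $g \in G$ let
\[W_n(g) := \big\{x\in V_0\cap T^{g^{-1}}V_0:\ x\ \hbox{and}\ T^gx\ \hbox{are joined in}\ \R_\G\ \hbox{by a path of length}\le n\big\},\]
and I would prove by induction on $n$ that $W_n(g)\in\A$ and $\a^g|W_n(g)$ is $\A$-measurable, for all $g$ simultaneously. The case $n=0$ is immediate: $W_0(g)$ is $\mu$-negligible unless $g=e_G$ (by freeness), and $W_0(e_G)=V_0$ with $\a^{e_G}\equiv e_H$. For the inductive step, peeling off the first edge $x\to T^sx$ (so $x\in A_s$) of a shortest path yields
\[W_n(g)\ \stackrel{\mu}{=}\ W_{n-1}(g)\ \cup\ \bigcup_{s\in G}\big(A_s\cap T^{-s}W_{n-1}(gs^{-1})\big),\]
where one uses~(\ref{eq:symmetry}) to see that $T^sx\in A_{s^{-1}}\subseteq \Vert(\G)$ (hence $T^sx\in V_0$ for a.e.\ such $x$) and the identity $T^{gs^{-1}}(T^sx)=T^gx$. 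By the inductive hypothesis and $T$-invariance of $\A$ the right-hand side lies in $\A$; and on the piece $A_s\cap T^{-s}W_{n-1}(gs^{-1})$ the cocycle identity gives $\a(g,x)=\a(gs^{-1},T^sx)\,\a(s,x)$, whose first factor is $\A$-measurable in $x$ by the inductive hypothesis together with $T$-invariance, and whose second factor is $\A$-measurable because the $A_{s,h}$ generate $\A$. The formulas obtained on different pieces are automatically consistent, since $\a^g$ is a single well-defined function, so $\a^g|W_n(g)$ is $\A$-measurable, closing the induction.

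To finish, I would invoke orbit-wise connectedness once more: it says exactly that $\R_\G\cap(V_0\times V_0)=\R_T\cap(V_0\times V_0)$, so for every $g$ the union $\bigcup_{n\ge 0}W_n(g)$ equals $V_0\cap T^{g^{-1}}V_0$, which is conull in $V\cap T^{g^{-1}}V$. Hence $\a^g$ coincides $\mu$-a.e.\ on $V\cap T^{g^{-1}}V$ with an $\A$-measurable function, so each $V_{g,h}$ and its $T$-translates lie in $\A$ modulo $\mu$, which gives the claimed inclusion of factors. The only delicate points I anticipate are the mod-$\mu$ bookkeeping needed to keep the recursion inside the conull set $V_0$ (routine, since $T$ is measure-preserving) and the coherence of the piecewise description of $\a^g$ — but that coherence is forced by the cocycle identity, so no genuine obstacle arises there.
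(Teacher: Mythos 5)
Your proof is correct and follows essentially the same route as the paper: both use orbit-wise connectedness to join $x$ to $T^gx$ by a path of graphing edges and the cocycle identity to express $\a(g,x)$ as a product of edge-values, each measurable with respect to the generators of the factor generated by $\a|\G$. The only difference is organizational — the paper writes each $V_{g,h}$ directly as a countable union over factorizations $g = g_k\cdots g_1$ and compatible sequences of cocycle values, whereas you run an induction on path length, peeling off one edge at a time.
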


\begin{proof}
Let $\A$ be the factor generated by $\a|\G$.  It contains every $A_g$, so it contains their union $V$, and so it contains all of the intersections $V\cap T^{g^{-1}}V$ (although these need not be equal to $A_g$ for any $g$).

Now fix $g \in G$ and $h \in H$, and consider the subsets
\[V_{g,h} := \{x \in V\cap T^{g^{-1}}V:\ \a(g,x) = h\}.\]
As $g$ and $h$ vary, these generate the $\s$-algebra of $(\a_{|V},V)$.  Since $\G$ is orbit-wise connected, we may remove a negligible set so that a point $x \in V\cap T^{g^{-1}}V$ lies in $V_{g,h}$ if and only if there is a factorization
\[g = g_kg_{k-1}\cdots g_1\]
such that
\[T^{g_{i-1}\cdots g_1}x \in A_{g_i}\quad \forall i=1,\ldots,k\]
and
\[\a(g,x) = \a(g_k,T^{g_{k-1}\cdots g_1}x)\cdots \a(g_1,x) = h.\]
There are only countably many possibilities for the sequence of elements $g_1$, \dots, $g_k \in G$, and similarly for the sequence of elements $\a(g_1,x)$, \dots, $\a(g_k,T^{g_{k-1}\cdots g_1}x)$. Therefore we have expressed $V_{g,h}$ is a countable union of further subsets all of which manifestly lie in the factor generated by $\a|\G$.
\end{proof}

\begin{lem}\label{lem:Shannon-bound2}
Suppose that $(\a,U)$ is a partial cocycle over $(X,\mu,T)$ and that $\G = (A_g)_g$ is a $T$-graphing for which $\Vert(\G) \subseteq U$.  Then
\[\rmh(\mu,T,\a|\G) \leq \sum_{g \in G}\rmH_\mu(A_g) + \sum_{g\in G}\mu(A_g)\rmH_{\mu_{|A_g}}(\a^g).\]
\end{lem}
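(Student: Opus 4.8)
The plan is to rewrite the factor generated by $\a|\G$ as the factor jointly generated by one genuine observable for each $g\in G$, bound its Kolmogorov--Sinai entropy for finite sub-families by elementary subadditivity of Shannon entropy, and then pass to the limit over finite sub-families.

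Concretely, I would first adjoin to $H$ an abstract symbol $\ast$ and, for each $g$, let $\psi_g$ be the observable equal to $\a(g,\cdot)$ on $A_g$ and to $\ast$ on $X\setminus A_g$ --- that is, the observable $(\a^g|A_g)^\ast$ associated with the partial observable $(\a^g|A_g,A_g)$. By the identity $\rmH_\mu(\phi;U)=\rmH_\mu(\phi^\ast)$ recalled in Section~\ref{subs:syscoc}, one has $\rmH_\mu(\psi_g)=\rmH_\mu(\a^g;A_g)=\rmH_\mu(A_g)+\mu(A_g)\rmH_{\mu_{|A_g}}(\a^g)$, so the right-hand side of the lemma is precisely $\sum_{g\in G}\rmH_\mu(\psi_g)$; and, by definition, the factor $\A$ generated by $\a|\G$ is the factor jointly generated by $(\psi_g)_{g\in G}$ together with all of its $T$-translates. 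One may assume $\sum_{g}\rmH_\mu(\psi_g)<\infty$, since otherwise there is nothing to prove.

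Next, for each finite $F\subseteq G$ I would set $\psi_F:=(\psi_g)_{g\in F}$, a legitimate observable into a countable set, and let $\A_F$ be the factor it generates, so that $\A_F$ is generated by $\psi_g$, $g\in F$. Combining the Kolmogorov--Sinai theorem, the bound $\rmh(\mu,T,\phi)\le\rmH_\mu(\phi)$, and subadditivity of Shannon entropy gives
\[
\rmh(\mu,T,\A_F)=\rmh(\mu,T,\psi_F)\le\rmH_\mu(\psi_F)\le\sum_{g\in F}\rmH_\mu(\psi_g).
\]
Finally I would let $F\uparrow G$: the factors $\A_F$ increase to $\A$, and using the standard fact that the KS entropy of a factor equals the supremum of the KS entropies of any increasing family of sub-factors generating it, one obtains
\[
\rmh(\mu,T,\a|\G)=\rmh(\mu,T,\A)=\sup_{F}\rmh(\mu,T,\A_F)\le\sum_{g\in G}\rmH_\mu(\psi_g),
\]
which is the asserted inequality.

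The only point that goes beyond bookkeeping is this last passage to the countably infinite family, i.e.\ the approximation of an arbitrary $\A$-measurable observable by one measurable with respect to some $\A_F$; this is the usual continuity property of KS entropy (controlled by a small conditional Shannon-entropy term), and it is the one place where a little care is required. Everything else is a direct application of the definitions and of subadditivity of Shannon entropy.
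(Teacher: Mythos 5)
Your proposal is correct and is in substance the paper's own argument: the paper bounds $\rmh(\mu,T,\a|\G)$ by $\sum_{g}\rmh(\mu,T,(\a^g|A_g,A_g))$ (subadditivity of KS entropy over the countably many generating partial observables) and then applies~(\ref{eq:Shannon-bound}) termwise, which is exactly the content of your estimate. Your finite-family bound followed by the increasing-factor limit is simply an explicit unpacking of that countable subadditivity, so the two proofs coincide.
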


\begin{proof}
This is a simple application of equation~(\ref{eq:Shannon-bound}):
\begin{align*}
\rmh(\mu,T,\a|\G) &\leq \sum_{g \in G}\rmh(\mu,T,(\a^g|A_g,A_g)) \leq \sum_{g\in G}\rmH_\mu(\a^g;A_g) \nonumber\\
&= \sum_{g \in G}\rmH_\mu(A_g) + \sum_{g \in G}\mu(A_g)\rmH_{\mu_{|A_g}}(\a^g).
\end{align*}
\end{proof}

In combination, the previous two lemmas allow one to control the entropy of the factor generated by $(\a_{|V},V)$ using any choice of orbit-wise connected graphing with vertex set $V$.  A careful choice of that graphing can give a better upper bound than a more na\"\i ve estimate in terms of the partial observables $(\a^g,V\cap T^{g^{-1}}V)$.

The next definition gives our modified notion of cost.

\begin{dfn}\label{dfn:cost}
The \textbf{$|\cdot|_G$-cost} of a graphing $\G = (A_g)_g$ is
\[\C_{|\cdot|_G}(\G) := \sum_{g\in G}|g|_G\cdot \mu(A_g).\]
\end{dfn}

This differs from Levitt's definition by the presence of $|g|_G$ as a weighting factor.

The $|\cdot|_G$-cost will be the basis of several estimates later in the paper.  Simplest among these is the following.

\begin{lem}\label{lem:bound-by-cost}
For every $\eps > 0$ there is a $C_\eps < \infty$ such that for any $T$-graphing $\G$ we have
\[\rmh(\mu,T,\G) \leq \rmH_\mu(A_{e_G}) + C_\eps \cdot \C_{|\cdot|_G}(\G) + \eps.\]
\end{lem}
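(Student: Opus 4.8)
The plan is to reduce to finitely many of the sets $A_g$ and then apply Lemma~\ref{lem:Fur}. First, the factor $\A$ generated by $\G = (A_g)_{g\in G}$ is the join, directed by inclusion, of the factors $\A_F$ generated by the finite subfamilies $(A_g)_{g\in F}$, $F\subseteq G$ finite. By the standard continuity of Kolmogorov--Sinai entropy under increasing unions of factors of an amenable-group action, $\rmh(\mu,T,\G) = \sup_{F}\rmh(\mu,T,\A_F)$, so it suffices to bound each $\rmh(\mu,T,\A_F)$ by the claimed right-hand side, uniformly in $F$.

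Next I would fix a finite $F\subseteq G$ containing $e_G$ (enlarging $F$ only helps) and consider the finite-valued observable $\psi_F := (1_{A_g})_{g\in F}:X\to\{0,1\}^F$. Its level sets are finite Boolean combinations of the $A_g$, $g\in F$, so $\psi_F$ together with its $T$-translates generates exactly $\A_F$. By the Kolmogorov--Sinai theorem recalled in the excerpt and the Shannon bound $\rmh(\mu,T,\phi)\le\rmH_\mu(\phi)$,
\[\rmh(\mu,T,\A_F) = \rmh(\mu,T,\psi_F) \leq \rmH_\mu(\psi_F),\]
and subadditivity of Shannon entropy gives $\rmH_\mu(\psi_F) \leq \sum_{g\in F}\rmH_\mu(1_{A_g}) = \sum_{g\in F}\rmH_\mu(A_g)$. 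Splitting off the term $g = e_G$ yields
\[\rmh(\mu,T,\A_F) \leq \rmH_\mu(A_{e_G}) + \sum_{g\in F\setminus\{e_G\}}\rmH_\mu(A_g).\]

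Finally I would apply Lemma~\ref{lem:Fur} with $p_g := \mu(A_g)$ for $g\in F\setminus\{e_G\}$ and $p_g := 0$ for all other $g\neq e_G$; the latter terms contribute nothing, since $\rmH_\mu(A_g) = -p_g\log p_g-(1-p_g)\log(1-p_g)$ vanishes at $p_g = 0$. Because $|e_G|_G = 0$, the identity element never contributes to the $|\cdot|_G$-cost, so Lemma~\ref{lem:Fur} bounds $\sum_{g\in F\setminus\{e_G\}}\rmH_\mu(A_g)$ above by $C_\eps\sum_{g\in G\setminus\{e_G\}}|g|_G\,\mu(A_g) + \eps \le C_\eps\,\C_{|\cdot|_G}(\G) + \eps$, with $C_\eps$ depending only on $\eps$ and the fixed length function $|\cdot|_G$. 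Combining the displays and taking the supremum over $F$ gives the stated inequality.

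There is no serious obstacle here: all of the quantitative content is already contained in Lemma~\ref{lem:Fur}. The only points requiring a little care are the reduction from the possibly-infinite graphing to finite subfamilies (via continuity of KS entropy under increasing unions) and the correct isolation of the $A_{e_G}$ term, which must be kept separate precisely because $|e_G|_G = 0$ places it outside the reach of the cost.
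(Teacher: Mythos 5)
Your proof is correct and takes essentially the same route as the paper: bound $\rmh(\mu,T,\G)$ by $\sum_{g\in G}\rmH_\mu(A_g)$ via subadditivity of (Shannon and KS) entropy, keep the $A_{e_G}$ term separate since $|e_G|_G=0$, and apply Lemma~\ref{lem:Fur} with $p_g:=\mu(A_g)$. The paper simply asserts the subadditivity bound in one line, whereas you spell out the reduction to finite subfamilies and the continuity of KS entropy under increasing factors, but the substance is identical.
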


\begin{proof}
This follows from the bound
\[\rmh(\mu,T,\G) \leq \sum_{g \in G}\rmH_\mu(A_g) = \sum_{g\in G}[-\mu(A_g)\log \mu(A_g) - \mu(X\setminus A_g)\log \mu(X\setminus A_g)]\]
and Lemma~\ref{lem:Fur}.
\end{proof}

Our principal result about graphings and $|\cdot|_G$-cost is the following, which gives us great flexibility in finding low-cost $T$-graphings that are still `large' in the sense of orbit-wise connectedness.

\begin{prop}[Existence of low-cost graphings]\label{prop:lowcost}
Let $G$ be a finitely-generated amenable group of super-linear growth and $(X,\mu,T)$ a free ergodic $G$-system.  Let $U\subseteq X$ have positive measure, and let $\eps > 0$.  Then there is a nontrivial orbit-wise connected $T$-graphing $\G$ such that
\[\Vert(\G) \subseteq U, \quad \mu(\rm{Vert}(\G)) < \eps \quad \hbox{and} \quad \C_{|\cdot|_G}(\G) < \eps.\]
\end{prop}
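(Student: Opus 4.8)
The plan is to build the graphing orbit-by-orbit, in a way that is transparently measurable, by choosing for each point a sparse but connected ``skeleton'' inside its $T^G$-orbit restricted to a small return set $U$. First I would reduce to the case where the generating set $B_G(e_G,1)$ is the relevant building block: a $T$-graphing $\G = (A_g)_g$ is orbit-wise connected with $\Vert(\G) \subseteq U$ precisely when, for a.e.\ $x$, the graph it puts on $T^G x \cap U$ is connected. So the task is to choose, measurably in $x$, a connected subgraph of the Cayley-type graph that $\R_T$ carries on each orbit, whose vertex set is a small-measure subset of $U$ and whose total edge-weight $\sum_g |g|_G \mu(A_g)$ is small.

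The key device is a Rokhlin-type tiling of $X$ by F\o lner sets, plus the super-linear growth hypothesis, which is exactly what makes the boundary-to-volume ratio of a ball go to zero faster than a constant. Concretely: since $G$ is amenable of super-linear growth, for any $\delta$ I can find a F\o lner set $F$ that (i) is $(\eps',\mathrm{fin})$-invariant for a small $\eps'$, and (ii) has $|\partial F|/|F|$ small \emph{and} $\sum_{g \in F} |g|_G / |F|$ controlled relative to the radius. Using the Ornstein--Weiss / Rokhlin lemma for amenable group actions, tile (most of) $X$ by translates of a finite family of such F\o lner shapes, with the set of ``base points'' $B$ having $\mu(B) = 1/|F| + o(1/|F|)$, small. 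Inside each tile $Fx$ (for $x$ a base point), I want to connect all of $Fx \cap U$ to the base point $x$ along paths through the orbit; the natural choice is: for each $y \in Fx \cap U$, pick a geodesic in $G$ from $e_G$ to the group element carrying $x$ to $y$, declare all its steps to be edges, and take $A_g$ to be the union over all base points and all such geodesic steps landing on a $g$-edge. This makes $\Vert(\G)$ contained in $U$ after intersecting; if that intersection breaks connectedness one instead connects each point of $Fx\cap U$ directly to a fixed choice of nearby point of $U$, still within the tile, at the cost of a bounded multiplicative factor in length.

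To get $\mu(\Vert(\G)) < \eps$ I should \emph{not} take all of $Fx$ as vertices; rather, the vertex set should be just $(Fx \cap U)$ together with the minimal set of intermediate orbit-points needed to realize the connecting geodesics --- but since those intermediate points also lie in $Fx$ and $Fx$ has measure $|F|\,\mu(B)$, which I cannot make small, the correct move is the standard one: pass first to a positive-measure subset $U_0 \subseteq U$ with $\mu(U_0)$ as small as desired, run the whole construction relative to $U_0$ using F\o lner sets large enough that $U_0$ still meets almost every orbit densely enough to keep the graph connected, so that $\Vert(\G)$ ends up inside $U_0$ up to null sets. Then $\mu(\Vert(\G)) \le \mu(U_0) < \eps$ automatically. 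For the cost, $\C_{|\cdot|_G}(\G) = \sum_g |g|_G \mu(A_g)$ is bounded by (number of base points per unit mass) $\times$ (total geodesic length used per tile), i.e.\ roughly $\mu(B) \cdot |F| \cdot (\text{diam}_G F) \cdot (\text{fraction of } F \text{ in } U)$; the point of super-linear growth is that choosing $F$ to be (close to) a ball of radius $r$, the number of orbit points of $U$ inside it that actually need connecting is $\approx \mu(U) |B_G(e_G,r)|$, each at geodesic cost $\le r$, while the density of base points is $\approx 1/|B_G(e_G,r)|$, so the cost per unit mass is $\approx r \cdot \mu(U) $ --- which is \emph{not} small. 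This shows the na\"\i ve spanning-tree-to-a-center construction is too expensive, and one must instead use a hierarchical/multi-scale connection scheme.

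So the real construction, and the main obstacle, is the multi-scale one: fix a rapidly growing sequence of radii $r_1 \ll r_2 \ll \cdots$, tile at each scale, and connect the base points of scale-$k$ tiles to base points of scale-$(k{+}1)$ tiles, so that each individual edge has length $\le O(r_{k+1})$ but the \emph{number} of scale-$k$ base points per unit mass is $\approx 1/|B_G(e_G,r_k)|$, making the scale-$k$ contribution to the cost $\approx r_{k+1}/|B_G(e_G,r_k)|$. Super-linear growth means $|B_G(e_G,r_k)|/r_k \to \infty$, so with $r_{k+1} = r_k \cdot (\text{slowly growing})$ and $r_k \to \infty$ fast enough, $\sum_k r_{k+1}/|B_G(e_G,r_k)| < \eps$; the finitely many top scales are handled by ergodicity (almost every orbit enters $U$), giving connectedness of the whole orbit graph. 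The hard part is organizing this hierarchy measurably --- choosing the tilings at all scales coherently via iterated Ornstein--Weiss, verifying that the resulting union of edges over all scales still has each $A_g$ measurable with $\sum_g |g|_G \mu(A_g)$ converging, and checking orbit-wise connectedness (every point of $U$ on a given orbit is joined, through a bounded chain of scales, to a common center). I expect this bookkeeping, rather than any single estimate, to be the technical heart of the proof; the growth hypothesis enters in exactly one inequality, $\sum_k r_{k+1}/|B_G(e_G,r_k)| \to 0$, and virtually cyclic groups fail it, consistent with the remark in the introduction.
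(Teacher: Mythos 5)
Your multi-scale plan is, in outline, the route the paper actually takes (nets of $U$-points at a hierarchy of scales, edges of length comparable to the scale, cost per scale of order (edge length)$\times$(density of the net), summed using growth), but as written it has two genuine gaps. First, the single inequality you say the growth hypothesis supplies, that $\sum_k r_{k+1}/|B_G(e_G,r_k)|$ can be made small, does \emph{not} follow from super-linear growth in the form $|B_G(e_G,r)|/r\to\infty$: if the growth function were, say, $r\log\log r$, then with $r_{k+1}\asymp r_k$ the series diverges, and taking $r_{k+1}$ much larger than $r_k$ only increases the terms. What is needed is the gap theorem that a finitely generated group of super-linear growth has at least quadratic growth, $|B_G(e_G,r)|\geq c_1r^2$ (Proposition~\ref{prop:superlin} in the paper); only then is the sum comparable to $\sum_k 1/r_k$ and controllable. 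Relatedly, your density heuristics treat the tiles as balls of radius $r_k$, but balls need not be F\o lner; the paper works instead with \emph{connected} F\o lner sets and bounds a maximal $2r$-separated net inside such a set by $|F|/|B_G(e_G,r)|$, which is exactly where quadratic growth enters (Lemma~\ref{lem:Fol-skel}, Corollary~\ref{cor:Fol-skel}).

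Second, the step you defer as ``bookkeeping'' --- orbit-wise connectedness --- is the technical heart, and your sketch does not contain a mechanism that makes it work: there is no ``common center'' for an infinite orbit, and two vertices lying near tile boundaries at every scale need not have merging hierarchies unless the scales are coupled. The paper's device is concrete: at scale $n$ choose a $(2^{-n},2^n)$-Rokhlin subrelation $\R_n$ whose classes contain the ball $T^{B_G(e_G,2^{n+3})}x$ around most points $x$, fix the base vertex set $V$ inside $U$ intersected with all of these good sets, and at scale $n$ build a graphing with vertex set $V_n\subseteq V$ that is $(T,2^{n+2})$-dense in $V$ and generates exactly $\R_n$ on $V_n$; connectedness of a pair $x,T^gx\in V$ is then proved by induction on the least $n$ with $2^{n+3}>|g|_G$, since nearby $V_n$-points are forced into a single $\R_n$-class by the ball-containment property. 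Two smaller slips: a graphing edge $A_g$ joins $x$ directly to $T^gx$ no matter how large $|g|_G$ is (this is what the weight $|g|_G$ in the cost accounts for), so no ``intermediate orbit points'' outside $U$ are ever needed; and your diagnosis of the single-scale failure is only half right --- the cure is not only multi-scale but also taking as vertices a sparse $(4r)$-dense net of $U$ inside each F\o lner class, rather than all of $U$ in the tile, which is what drops the per-class weight to $O(|F|/r)$ in Lemma~\ref{lem:lowcost}.
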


This proposition will be proved in the next section.

\section{Constructing low-cost graphings}\label{sec:low-cost}

This section culminates in the proof of Proposition~\ref{prop:lowcost}.  First we give two subsections to some preparatory results.  Let $(X,\mu,T)$ be a free ergodic $G$-system.

\subsection{F\o lner sets and skeleta}

The following nomenclature is not standard, but will be useful in the sequel.

\begin{dfn}
Let $(X,d)$ be a metric space and $r > 0$.  An \textbf{$r$-skeleton} of $X$ is a connected graph $(V,E)$ in which $V$ is an $r$-dense subset of $X$ (that is, every element of $X$ lies within distance $r$ of some element of $V$).  Its \textbf{$d$-weight} is the quantity
\[\rm{wt}_d(V,E) = \sum_{uv \in E}d(u,v) \in [0,+\infty].\]
\end{dfn}

\begin{lem}\label{lem:subset-skeleton}
Let $(X,d)$ be a compact metric space, let $r > 0$, and let $(V,E)$ be an $r$-skeleton of $(X,d)$ with $d$-weight $w < \infty$.  Then any subset $Y \subseteq X$ has a $(2r)$-skeleton of $d$-weight at most
\[2w + 2r|V|.\]
\end{lem}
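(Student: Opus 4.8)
The plan is to push the given $r$-skeleton $(V,E)$ of $X$ forward onto $Y$ and then prune it. We may assume $V$ is finite, since otherwise the asserted bound is infinite, and that $Y\neq\emptyset$, since otherwise the empty graph works. First I would reduce to a tree: choose a spanning tree $T$ of the connected graph $(V,E)$; dropping edges only decreases the $d$-weight, so $\mathrm{wt}_d(T)\le w$. Next I would fix the new vertex set. For each $v\in V$ lying within distance $r$ of $Y$, choose a point $\phi(v)\in Y$ with $d(v,\phi(v))\le r$; let $V_0$ be the set of such $v$ and put $V':=\phi(V_0)\subseteq Y$. Since $V$ is $r$-dense in $X$, every $y\in Y$ has some $v\in V$ with $d(v,y)\le r$; that $v$ then lies in $V_0$ and $d(y,\phi(v))\le d(y,v)+d(v,\phi(v))\le 2r$, so $V'$ is $2r$-dense in $Y$. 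In particular $V_0\neq\emptyset$, and I would root $T$ at a vertex of $V_0$.

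The core step is to build a low-weight connected graph on $V'$. Double every edge of $T$ and take an Eulerian circuit $W$ of the resulting multigraph: it traverses each edge of $T$ exactly twice, so its total $d$-length is $2\,\mathrm{wt}_d(T)\le 2w$, and it visits every vertex of $V$, in particular every vertex of $V_0$. Let $u_0,u_1,\dots,u_m$ (with $u_0=u_m$ the chosen root) be the vertices of $V_0$ listed in the cyclic order and with the multiplicities in which $W$ visits them, and join $\phi(u_{j-1})$ to $\phi(u_j)$ for each $j=1,\dots,m$. The arc of $W$ between the visits to $u_{j-1}$ and $u_j$ meets $V_0$ only at its endpoints, and by the triangle inequality (two jumps of length $\le r$, plus a walk from $u_{j-1}$ to $u_j$) the new edge $\{\phi(u_{j-1}),\phi(u_j)\}$ has $d$-weight at most $2r$ plus the $d$-length of that arc. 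These arcs partition $W$, so the arc-contributions over any subfamily of the new edges total at most $\mathrm{len}_d(W)\le 2w$; and the graph on $V'$ just built is connected, since the closed walk $\phi(u_0),\dots,\phi(u_m)$ runs through all of $V'$.

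Finally I would prune: take a spanning tree $T'$ of the graph just constructed. It has at most $|V'|-1\le|V|-1$ edges; each contributes at most $2r$ from its two endpoint-jumps, and the arcs of $W$ it uses are a subfamily of the partition, hence have total length at most $\mathrm{len}_d(W)\le 2w$, so
\[\mathrm{wt}_d(T')\ \le\ 2r(|V|-1)+2w\ \le\ 2w+2r|V|.\]
Thus $(V',E(T'))$ is a $(2r)$-skeleton of $Y$ of the required $d$-weight.

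The main obstacle is the core step: a naive projection of $T$ onto $V'$ need not be connected, and routing around the vertices of $V\setminus V_0$ carelessly --- for instance through a high-degree vertex of $V\setminus V_0$ --- can inflate the weight arbitrarily. Using an Eulerian traversal ensures each tree edge is used at most twice (which is what gives the factor $2$ in the $2w$ term), and pruning to a spanning tree only at the very end guarantees both that the charged arcs of $W$ are edge-disjoint and that at most $|V|-1$ of the jump-pairs (each of length $\le 2r$) are charged, so the two terms stay separately under control.
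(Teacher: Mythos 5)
Your proof is correct and follows essentially the same route as the paper: restrict to the vertices of $V$ lying within distance $r$ of $Y$, push each to a nearby point of $Y$ at a cost of at most $2r$ per edge, and control the remaining weight by a spanning tree of those vertices of weight at most $2w$. The only real difference is that you prove that factor-$2$ step explicitly via tree-doubling and an Eulerian circuit (with the transfer to $Y$ interleaved and a final pruning), whereas the paper invokes it directly as the classical lower bound of $1/2$ for the Steiner ratio of a general metric space.
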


\begin{proof}
Let $W := \{v \in V:\,d(v,Y) < r\}$.  Since $V$ is $r$-dense in the whole of $X$, one must have $B_r(W) \supseteq Y$, where $B_r(W)$ is the union of all open $r$-balls centred at points of $W$.  For each $w \in W$, pick $y_w \in Y\cap B_r(w)$, and let $V_Y:= \{v_w:\,w\in W\}$.  Since $B_{2r}(V_Y) \supseteq B_r(W)$, the set $V_Y$ is $(2r)$-dense in $Y$.

By removing edges from $E$ if necessary, we may assume that it is a spanning tree of $V$.  Then, since $V$ has a spanning tree with $d$-weight $w$, its further subset $W\subseteq V$ has a spanning tree with $d$-weight at most $2w$: this is the classical lower bound of $1/2$ for the Steiner ratio of a general metric space (see, for instance,~\cite[Chapter 3]{Cie01}). Let $E' \subseteq \binom{W}{2}$ be a spanning tree of $W$ with
\[\rm{wt}_d(W,E') = \sum_{ww' \in E'}d(w,w') \leq 2\sum_{xy \in E}d(x,y) = 2w.\]
Let $E_Y := \{v_wv_{w'}:\,ww' \in E'\}$.  Now $(V_Y,E_Y)$ is a $(2r)$-skeleton of $Y$, and
\[\rm{wt}_d(V_Y,E_Y) \leq \rm{wt}_d(W,E') + 2r|E'| \leq 2w + 2r(|W| - 1) \leq 2w + 2r|V|,\]
using the fact that, in a tree such as $(W,E')$, one has $|E'| = |W| - 1$.
\end{proof}

Now let $G$ be a finitely generated amenable group and $d_G$ a right-invariant word metric on it, as before. Given $\eps,r > 0$, let us say that a subset $F \subseteq G$ is \textbf{$(\eps,r)$-F\o lner} if
\[|F| < \infty \quad \hbox{and} \quad |(B_G(r) \cdot F)\setminus F| \leq \eps |F|,\]
where we abbreviate $B_G(e_G,r) =: B_G(r)$.  The amenability of $G$ asserts that $(\eps,r)$-F\o lner sets exist for every $\eps$ and $r$.

The use of two parameters, $\eps$ and $r$, in specifying the F\o lner condition is somewhat redundant, but in some of the proofs that follow it is convenient to be able to manipulate them separately.

We also need our F\o lner sets to satisfy another condition.  Given $E \subseteq G$ and $r > 0$, we say $E$ is \textbf{$r$-connected} if for any $g,h \in E$ there is a finite sequence
\[g = g_0,g_1,\ldots,g_m = h\]
with $g_i \in E$ and $d_G(g_i,g_{i+1}) \leq r$ for every $i = 0,1,\ldots,m-1$.  Such a sequence is called an \textbf{$r$-path}, and the integer $m$ is its \textbf{length}. A set is \textbf{connected} if it is $1$-connected.

\begin{lem}
If $G$ is amenable, then for every $\eps,r > 0$ it has an $(\eps,r)$-F\o lner set which is connected.
\end{lem}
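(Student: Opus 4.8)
The plan is to pass to a very good F\o lner set, break it into its $1$-connected components, and show that one of those components is already $(\eps,r)$-F\o lner. Throughout, write $\partial_r E := (B_G(r)\cdot E)\setminus E$ for a finite set $E\subseteq G$; since $e_G\in B_G(r)$ this contains $E$ and records its outer $r$-boundary. Put $\delta := \eps/(2|B_G(r)|^2)$, which is positive because $G$ is finitely generated, and use amenability to pick a nonempty $(\delta,r)$-F\o lner set $F$. Decompose $F$ into its $1$-connected components $C_1,\dots,C_k$. Each $C_i$ is finite, nonempty and $1$-connected, so the lemma will follow once we produce an index $i$ with $|\partial_r C_i|\le\eps|C_i|$, and by averaging it suffices to prove
\[
\sum_{i=1}^k|\partial_r C_i| \ \le\ 2|B_G(r)|^2\,|\partial_r F| \ \le\ 2|B_G(r)|^2\,\delta\,|F| \ =\ \eps\sum_{i=1}^k|C_i|.
\]

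To prove this inequality I would bound the number of pairs $(p,i)$ with $p\in\partial_r C_i$, splitting according to whether $p\notin F$ or $p\in F$. If $p\in\partial_r C_i$ then $B_G(r)p$ meets $C_i$, and since the $C_i$ are disjoint subsets of $F$ at most $|B_G(r)|$ of them are met by $B_G(r)p$; so each fixed $p$ lies in at most $|B_G(r)|$ of the $\partial_r C_i$. For $p\notin F$ we have $p\in (B_G(r)\cdot F)\setminus F=\partial_r F$, so such pairs number at most $|B_G(r)|\cdot|\partial_r F|$. For $p\in F$, say $p\in C_j$, a pair $(p,i)$ forces $i\neq j$ and $d_G(p,c)\le r$ for some $c\in C_i$; take a geodesic $p=x_0,x_1,\dots,x_m=c$ with $m\le r$ and consecutive vertices at distance $1$. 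This geodesic cannot lie entirely in $F$, for otherwise $p$ and $c$ would belong to the same $1$-connected component; hence some $x_t\notin F$, and then $x_t\in(B_G(r)p)\setminus F\subseteq\partial_r F$ (using $p\in F$). Fixing one such witness $y(p,i)\in\partial_r F$ for every admissible pair, a given $y\in\partial_r F$ can only arise from pairs with $p\in B_G(r)y$ --- at most $|B_G(r)|$ choices of $p$, and at most $|B_G(r)|$ choices of $i$ for each --- so the pairs with $p\in F$ number at most $|B_G(r)|^2\cdot|\partial_r F|$. Adding the two cases gives $\sum_i|\partial_r C_i|\le(|B_G(r)|+|B_G(r)|^2)|\partial_r F|\le 2|B_G(r)|^2|\partial_r F|$, and the displayed chain follows from the F\o lner condition on $F$ together with the choice of $\delta$.

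Since every $C_i$ is nonempty, the displayed inequality forces some index $i$ with $|\partial_r C_i|\le\eps|C_i|$, and that $C_i$ is the required $1$-connected $(\eps,r)$-F\o lner set. The one step that needs care --- and the main obstacle --- is the counting in the middle paragraph: one must check that the geodesic-exit argument produces a witness lying in $\partial_r F$ rather than merely outside $F$, and that no such witness is reused more than $|B_G(r)|^2$ times. (The symmetry $|g|_G=|g^{-1}|_G$ of the length function is used freely when passing between $d_G(p,y)\le r$ and $p\in B_G(r)y$.) Everything else is routine.
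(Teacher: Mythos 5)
Your argument is correct, but it takes a genuinely different route from the paper. The paper also decomposes a good F\o lner set into components, but at scale $2r$ rather than $1$: it takes an $(\eps/|B_G(r)|,r)$-F\o lner set, splits it into maximal $(2r)$-connected pieces $F_i$, and exploits the fact that the sets $B_G(r)F_i$ are pairwise disjoint, so the boundary ratio of $F$ is exactly a weighted average of the boundary ratios of the $F_i$ and the averaging step is immediate; it then pays for this convenience with an extra thickening step, replacing the chosen $(2r)$-connected piece $F_i$ by $E := B_G(r)F_i$, which is shown to be $1$-connected and still $(\eps,r)$-F\o lner. You instead work with the $1$-connected components $C_i$ of $F$ directly, where the $r$-neighbourhoods of distinct components can overlap and can re-enter $F$, and you compensate with the double-counting argument: charging each pair $(p,i)$ with $p \in \partial_r C_i$ either to $p$ itself (when $p \notin F$) or to a geodesic-exit witness in $\partial_r F$ (when $p \in F$), with multiplicity at most $|B_G(r)|^2$; the two points you flag as needing care are indeed the crux, and your verifications of them (the witness $x_t$ lies in $B_G(r)p \subseteq B_G(r)F$ and outside $F$, and each witness is reused at most $|B_G(r)|\cdot|B_G(r)|$ times) are sound. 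What each approach buys: the paper's scale-$2r$ trick makes the counting trivial but delivers a connected F\o lner set that is a thickened set $B_G(r)F_i$, not a subset of $F$; yours requires the more intricate charging estimate (and the worse constant $\eps/(2|B_G(r)|^2)$ in the choice of the auxiliary F\o lner set) but produces an honest $1$-connected component of $F$ with no thickening step. One trivial wording slip: after defining $\partial_r E := (B_G(r)\cdot E)\setminus E$ you say ``this contains $E$''; it is $B_G(r)\cdot E$, not $\partial_r E$, that contains $E$ --- harmless, since you only ever use the set difference.
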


\begin{proof}
\emph{Step 1.}\quad Let $\eta := \eps/|B_G(r)|$. Let $F$ be an $(\eta,r)$-F\o lner set, and let
\[F = F_1\cup \cdots \cup F_k\]
be the partition of $F$ into maximal $(2r)$-connected subsets.  Then we must have
\[i\neq j \quad \Longrightarrow \quad B_G(r)F_i\cap B_G(r)F_j = \emptyset,\]
and therefore
\[\frac{|B_G(r)F\setminus F|}{|F|} = \sum_{i=1}^k\frac{|B_G(r)F_i\setminus F_i|}{|F_i|}\cdot \frac{|F_i|}{|F|}.\]
Since the left-hand side of this equation is at most $\eta$, and the right-hand side is an average weighted by the factors $|F_i|/|F|$, there must be some $i\leq k$ for which
\begin{equation}\label{eq:etaFol}
\frac{|B_G(r)F_i\setminus F_i|}{|F_i|} \leq \eta.
\end{equation}
So $F_i$ is a $(2r)$-connected $(\eta,r)$-F\o lner set.

\vspace{7pt}

\emph{Step 2.}\quad Now let $E := B_G(r) F_i$.  If $g,h \in F_i$ and $d_G(g,h) \leq 2r$, then there is a $1$-path of length at most $2r$ from $g$ to $h$ in $G$, by the definition of the word metric $d_G$.  The first $r$ elements of that path must be contained in $B_G(g,r)$, and the last $r$ elements must be contained in $B_G(h,r)$, so the whole path is contained in $E$.  Since $F_i$ is $(2r)$-connected, it follows that $E$ is connected.

On the other hand, we have
\[B_G(r)\cdot  E = (B_G(r)\cdot F_i) \cup (B_G(r) \cdot (E\setminus F_i)),\]
and the first set in this right-hand union is just $E$ again.  Therefore
\[|(B_G(r)\cdot  E)\setminus E| \leq |B_G(r) \cdot (E\setminus F_i)| \leq |B_G(r)||E\setminus F_i|.\]
By~(\ref{eq:etaFol}), this is at most $\eta|B_G(r)||F_i| \leq \eps|E|$, so $E$ is $(\eps,r)$-F\o lner.
\end{proof}

The main results of this section apply to groups of super-linear growth.  Curiously, their proofs seem to require the following fact from geometric group theory.

\begin{prop}\label{prop:superlin}
If $G$ is a finitely-generated group of super-linear growth, then its growth is at least quadratic: there is a constant $c_1 > 0$ such that
\[|B_G(r)| \geq c_1r^2 \quad \forall r \geq 1.\]
\qed
\end{prop}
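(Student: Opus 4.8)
The plan is to invoke the classical ``gap theorem'' of geometric group theory, which forbids any growth type strictly between linear and quadratic; Proposition~\ref{prop:superlin} is exactly the lower half of this dichotomy. I would argue by contraposition. Write $\beta(r) := |B_G(r)|$; this function is non-decreasing, and since $G$ is infinite it is in fact strictly increasing, so the easy bound $\beta(r) \ge r+1$ holds for all $r \ge 0$ — but we want the quadratic improvement. Suppose that no constant $c_1 > 0$ satisfies $\beta(r) \ge c_1 r^2$ for every $r \ge 1$. Since $\beta(r) \ge 1$ always and $\beta$ is increasing, only large radii can obstruct such an inequality, so this forces $\liminf_{r\to\infty}\beta(r)/r^2 = 0$.

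Next I would feed this into the structure theory of groups of polynomial growth. By the van den Dries--Wilkie refinement of Gromov's polynomial growth theorem, the condition $\liminf_r \beta(r)/r^d < \infty$ for even a single exponent $d$ already implies that $G$ has polynomial growth, hence is virtually nilpotent; applying this with $d = 2$ disposes of our case. The Bass--Guivarc'h formula then gives $\beta(r) \asymp r^D$ for a nonnegative integer $D$, the degree read off from the ranks of the successive quotients of the lower central series of a nilpotent finite-index subgroup, and the relation $\liminf_r \beta(r)/r^2 = 0$ rules out $D \ge 2$. Thus $G$ is finite ($D = 0$) or virtually $\bbZ$ ($D = 1$); in either case $\beta(r) \le Cr$ for some finite $C$ and all $r$, that is, $G$ does not have super-linear growth. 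This is the contrapositive. To upgrade the resulting asymptotic lower bound to the bound $\beta(r) \ge c_1 r^2$ valid for \emph{all} $r \ge 1$, one shrinks $c_1$ to absorb the finitely many positive ratios $\beta(r)/r^2$ with $r$ below the threshold where the asymptotic estimate takes effect, using $\beta(r) \ge 1$.

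The hard part is that there seems to be no way around invoking a genuine structure theorem: all the other steps are bookkeeping, but the passage from ``subquadratic growth along a subsequence'' to ``virtually cyclic'' rests essentially on Gromov's theorem (in the van den Dries--Wilkie form) together with Bass--Guivarc'h. If one prefers to keep the present paper more self-contained, an acceptable alternative is to cite the linear/quadratic growth gap directly, for which expository treatments are available in de la Harpe's \emph{Topics in Geometric Group Theory} and in Mann's \emph{How Groups Grow}; I would be content to do that, since this fact lies well outside the circle of ideas the rest of the paper develops.
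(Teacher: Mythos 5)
Your argument is correct and is essentially the route the paper itself takes: the paper proves nothing beyond citing exactly the two options you name, namely Gromov's polynomial growth theorem combined with Wolf--Guivarc'h--Bass, or the elementary linear/quadratic gap in Mann's book (\cite[Corollary 3.5]{Man12}). Your one genuine addition is the observation that, arguing contrapositively from $\liminf_r \beta(r)/r^2 = 0$, one needs the van den Dries--Wilkie form of Gromov's theorem (a bound along a subsequence of radii suffices), which is indeed the correct way to make that citation route airtight.
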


Proposition~\ref{prop:superlin} can be deduced by combining Gromov's theorem on groups of polynomial growth with the work of Wolf~\cite{Wol68}, Guivarc'h~\cite{Gui71} and Bass~\cite{Bass72} on the growth of finitely generated nilpotent groups.  However, it also has a more elementary proof: see~\cite[Corollary 3.5]{Man12}. (The latter proof and reference were pointed out to me by Brandon Seward.)

\begin{lem}[Skeleta for F\o lner sets]\label{lem:Fol-skel}
If $G$ is an amenable group of super-linear growth, then there is a constant $c$ with the following property.  For any $r \geq 1$, if $F$ is a connected $(1,r)$-F\o lner set, then it has a $(2r)$-skeleton $(V,E)$ satisfying
\[\rm{wt}_{d_G}(V,E) \leq c|F|/r.\]
\end{lem}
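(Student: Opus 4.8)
\emph{Proof plan.} The strategy is to take for $V$ a maximal $r$-separated subset of $F$ and then to join it up by a tree all of whose edges have $d_G$-length $O(r)$. The crucial point is that, since $G$ has super-linear growth, Proposition~\ref{prop:superlin} gives $|B_G(\rho)| \geq c_1\rho^2$, and this quadratic lower bound forces the $r$-net $V$ to be small — of size $O(|F|/r^2)$ — so that a tree on $V$ with $O(r)$-length edges has total $d_G$-weight only $O(|F|/r)$. We may assume $r$ exceeds some fixed constant $r_0$ (say $r_0 = 6$); for $1 \leq r \leq r_0$ one instead takes $V := F$ and lets $E$ be the edge set of a spanning tree of the subgraph of the Cayley graph induced on $F$, which is connected precisely because $F$ is $1$-connected, and whose $d_G$-weight is $|F| - 1 \leq r_0|F|/r$, so it suffices to enlarge $c$ at the end.

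The first step is a packing bound on $|V|$. By maximality, every point of $F$ lies within $d_G$-distance $r$ of $V$, so $V$ is $r$-dense, hence $2r$-dense, in $F$. On the other hand, with $\rho := \lfloor r/3\rfloor \geq 1$, the balls $B_G(v,\rho)$ for $v \in V$ are pairwise disjoint (two points of $V$ at distance $\geq r$ cannot both be $\rho$-close to a common point, as $2\rho < r$), each has cardinality $|B_G(\rho)| \geq c_1\rho^2 \geq c_1 r^2/36$ by Proposition~\ref{prop:superlin}, and each is contained in the $\rho$-neighbourhood of $F$, which by right-invariance of $d_G$ equals $B_G(\rho)\cdot F \subseteq B_G(r)\cdot F$. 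Since $F$ is $(1,r)$-F\o lner we have $|B_G(r)\cdot F| \leq 2|F|$, so
\[\frac{c_1 r^2}{36}\,|V| \ \leq\ \Big|\bigsqcup_{v\in V} B_G(v,\rho)\Big| \ \leq\ |B_G(r)\cdot F| \ \leq\ 2|F|,\]
whence $|V| \leq 72|F|/(c_1 r^2)$.

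The second step joins $V$ up cheaply. Using $r$-density, fix for each $f \in F$ some $v(f) \in V$ with $d_G(f,v(f)) \leq r$, taking $v(v) = v$; this partitions $F$ into clusters $C_v := \{f : v(f) = v\}$. Since $F$ is $1$-connected, the graph on vertex set $V$ obtained from the induced Cayley graph on $F$ by contracting each $C_v$ is connected; let $E^*$ be the edge set of a spanning tree of it, regarded as a set of pairs in $V$. Each $\{v,v'\} \in E^*$ arises from some $f \in C_v$, $f' \in C_{v'}$ with $d_G(f,f') = 1$, so $d_G(v,v') \leq d_G(v,f) + 1 + d_G(f',v') \leq 2r + 1 \leq 3r$. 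Hence $(V,E^*)$ is a connected graph with $V$ being $2r$-dense in $F$, and
\[\rm{wt}_{d_G}(V,E^*) \ =\ \sum_{\{v,v'\}\in E^*} d_G(v,v') \ \leq\ 3r\,(|V|-1) \ \leq\ 3r\cdot\frac{72|F|}{c_1 r^2} \ =\ \frac{216}{c_1}\cdot\frac{|F|}{r},\]
so the lemma holds with $c := \max\{r_0,\,216/c_1\}$.

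The only substantive point — and the sole place the growth hypothesis enters — is the packing bound of the first step: it is exactly the at-least-quadratic growth supplied by Proposition~\ref{prop:superlin} that converts the F\o lner estimate $|B_G(r)\cdot F| \leq 2|F|$ into $|V| = O(|F|/r^2)$, and this is what makes the $O(r)$-length edges of the second step sum to only $O(|F|/r)$. With merely super-linear but sub-quadratic growth the argument would give a weaker bound, which is consistent with the remark that the fact from geometric group theory is genuinely needed here.
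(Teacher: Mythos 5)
Your proof is correct and follows essentially the same route as the paper's: a maximal separated net $V\subseteq F$, a packing/volume-comparison bound $|V|=O(|F|/r^{2})$ obtained by combining the F\o lner condition $|B_G(r)\cdot F|\leq 2|F|$ with the quadratic growth bound of Proposition~\ref{prop:superlin}, and then a spanning tree on $V$ whose edges have length $O(r)$. The paper streamlines the details by taking $V$ maximal $(2r)$-separated and packing the radius-$r$ balls themselves (getting $5r$-length edges directly from maximality and connectedness of $F$), which avoids your floor function, the contraction argument, and the separate case $1\leq r\leq r_0$.
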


\begin{proof}
Let $c_1$ be the constant given by Proposition~\ref{prop:superlin}.  Let $V \subseteq F$ be a maximal $(2r)$-separated subset, chosen so that it contains $e_G$.  The standard volume-comparison argument gives
\[|V||B_G(r)| = \Big|\bigcup_{g \in V}B_G(g,r)\Big| \leq |B_G(r)\cdot F| \leq 2|F| \quad \Longrightarrow \quad |V| \leq \frac{2|F|}{|B_G(r)|}.\]

Now consider the graph on $V$ in which two points form an edge if the distance between them is at most $5r$.  This graph is connected, by the connectedness of $F$ and the maximality of $V$.  It therefore contains a spanning tree, whose edge-set is a family $E$ of $|V|-1$ pairs of points in $V$.  This gives the bound
\[\rm{wt}_{d_G}(V,E) \leq 5r|E| < 5r|V| \leq \frac{10r|F|}{|B_G(r)|} \leq \frac{10r|F|}{c_1r^2} = \frac{20}{c_1}|F|/r.\]
\end{proof}

The above lemma and Lemma~\ref{lem:subset-skeleton} immediately combine to give the following.

\begin{cor}[Skeleta for subsets of F\o lner sets]\label{cor:Fol-skel}
If $G$ is an amenable group of super-linear growth, then there is a constant $c$ with the following property.  If $r > 0$, $F$ is a connected $(1,r)$-F\o lner set, and $A \subseteq F$, then $A$ has a $(4r)$-skeleton $(V,E)$ satisfying
\[\rm{wt}_{d_G}(V,E) \leq c|F|/r.\]
\qed
\end{cor}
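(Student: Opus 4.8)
The plan is to chain Lemma~\ref{lem:Fol-skel} with Lemma~\ref{lem:subset-skeleton}, keeping track of constants and invoking Proposition~\ref{prop:superlin} exactly once. First I would dispose of the trivial range $0 < r < 1$: there $B_G(r) = \{e_G\}$, so every finite set (in particular $F$) is automatically $(1,r)$-F\o lner, and I can take $V := A$ together with a minimal spanning tree $(A,E)$ of $A$ inside $(G,d_G)$, which is trivially a $(4r)$-skeleton of $A$. A spanning tree of the connected set $F$ in its Cayley graph is a Steiner tree for $A$ of $d_G$-weight $|F|-1$, so the lower bound $1/2$ for the Steiner ratio (the same fact used in the proof of Lemma~\ref{lem:subset-skeleton}) gives $\rm{wt}_{d_G}(A,E) \le 2(|F|-1) < 2|F| \le 2|F|/r$. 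Thus it remains to handle $r \ge 1$.

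For $r \ge 1$, Lemma~\ref{lem:Fol-skel} provides a $(2r)$-skeleton $(V_0,E_0)$ of $F$ with $\rm{wt}_{d_G}(V_0,E_0) \le c_0 |F|/r$, where $c_0$ is the constant furnished by that lemma. I would then apply Lemma~\ref{lem:subset-skeleton} to the metric space $(F,d_G|_F)$ --- finite, hence compact --- with the $(2r)$-skeleton $(V_0,E_0)$ and the subset $A \subseteq F$. This yields a $(2\cdot 2r) = (4r)$-skeleton $(V,E)$ of $A$ with
\[\rm{wt}_{d_G}(V,E) \;\le\; 2\,\rm{wt}_{d_G}(V_0,E_0) + 2(2r)\,|V_0| \;\le\; \frac{2c_0|F|}{r} + 4r\,|V_0|.\]
To control $|V_0|$ I would use that the skeleton built in the proof of Lemma~\ref{lem:Fol-skel} has $V_0$ a maximal $(2r)$-separated subset of $F$, whence the standard packing estimate gives $|V_0| \le 2|F|/|B_G(r)|$; Proposition~\ref{prop:superlin} then upgrades this to $|V_0| \le 2|F|/(c_1 r^2)$. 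Substituting, the term $4r|V_0|$ is at most $8|F|/(c_1 r)$, so $\rm{wt}_{d_G}(V,E) \le (2c_0 + 8/c_1)|F|/r$, and the corollary holds with $c := \max\{2,\; 2c_0 + 8/c_1\}$.

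The only slightly delicate point --- and the closest thing to an obstacle --- is that Lemma~\ref{lem:subset-skeleton}'s bound contains the cardinality $|V_0|$, which is not recorded in the \emph{statement} of Lemma~\ref{lem:Fol-skel}; one must reach into that lemma's proof (or restate it) to extract $|V_0| \le 2|F|/|B_G(r)|$. It is precisely here that super-linear growth is needed: without the quadratic lower bound $|B_G(r)| \ge c_1 r^2$, the contribution $4r|V_0|$ would only be $O(r|F|)$ rather than $O(|F|/r)$, and the desired estimate would fail. Everything else is routine bookkeeping of constants and of the doubling of skeleton radii.
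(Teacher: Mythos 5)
Your argument is exactly the combination the paper intends (it gives no proof beyond saying Lemma~\ref{lem:Fol-skel} and Lemma~\ref{lem:subset-skeleton} ``immediately combine''), and your bookkeeping is correct, including the genuinely needed extra ingredient: the bound $|V_0|\le 2|F|/|B_G(r)|$ from the proof of Lemma~\ref{lem:Fol-skel} together with Proposition~\ref{prop:superlin}, without which the $4r|V_0|$ term is not $O(|F|/r)$. Your separate treatment of $0<r<1$ (where Lemma~\ref{lem:Fol-skel} does not apply) is a small but legitimate addition; nothing further is needed.
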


\subsection{Rokhlin subrelations}

We now return to the $G$-system $(X,\mu,T)$.  If $x \in X$ and $A$ is a finite subset of the orbit $T^G(x)$, then we say $A$ is \textbf{$(\eps,r)$-F\o lner} or \textbf{$r$-connected} if this holds for its pre-image in the group: that is, for the set
\[\{g \in G:\ T^gx\in A\}.\]
Since the action is free, this pre-image has the same finite cardinality as $A$.  If we replace $x$ with a different point $T^hx$ in the same orbit, then this pre-image of $A$ changes by right-translation by $h^{-1}$.  This does not affect the properties of being $(\eps,r)$-F\o lner or $r$-connected, so those properties really depend only on the orbit $T^G(x)$ and the set $A$, not on the particular reference point $x$.

\begin{dfn}
Let $\eps,r > 0$.  A subrelation $\R\subseteq \R_T$ is \textbf{$(\eps,r)$-Rokhlin} if it is a Borel equivalence relation, all its equivalence classes are finite and connected, and
\[\mu\{x:\,[x]_\R\ \hbox{is $(\eps,r)$-F\o lner}\} > 1 - \eps.\]
\end{dfn}

This definition has many predecessors in the literature, but usually without requiring connectedness.  That additional demand adapts it to our present needs.

\begin{lem}
If $(X,\mu,T)$ is ergodic and atomless then $\R_T$ has an $(\eps,r)$-Rokhlin subrelation for every $\eps,r > 0$.
\end{lem}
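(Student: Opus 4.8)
The plan is to tile all but an $\eps$-fraction of $X$ by right translates of connected, highly invariant F\o lner sets, by means of the Rokhlin lemma of Ornstein and Weiss for free actions of amenable groups, and then to take $\R$ to be the equivalence relation whose classes are exactly these tiles, together with singletons on the uncovered part of $X$. The reason this works is that $d_G$ is right-invariant, so both $1$-connectedness and the $(\eps,r)$-F\o lner property of a finite subset of $G$ are preserved under right translation; hence each tile, being a right translate of one of the chosen shapes, inherits connectedness and, away from a small exceptional set, the F\o lner bound. We may assume $r\geq 1$, as enlarging $r$ only strengthens both conditions, and we recall that $(X,\mu,T)$ is free and ergodic throughout this section, so that (as $G$ is infinite) $\mu$ is atomless and almost every orbit is infinite, which is all the Rokhlin lemma needs.

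To carry this out, I would exploit the fact that the Ornstein--Weiss quasi-tiling machinery lets one prescribe the tile shapes $F_1,\dots,F_n$, subject only to each $F_i$ being sufficiently invariant relative to $F_1,\dots,F_{i-1}$ and to $B_G(r)$. I would therefore build the $F_i$ one at a time, at each stage invoking the previous lemma to take $F_i$ to be a \emph{connected} $(\delta_i,r_i)$-F\o lner set with $\delta_i\leq\eps$ and $r_i\geq r$ chosen large enough to meet the invariance required; this makes every $F_i$ connected and $(\eps,r)$-F\o lner. The Rokhlin lemma then furnishes measurable sets $B_1,\dots,B_n\subseteq X$ for which the map
\[\bigsqcup_{i=1}^n F_i\times B_i\to X,\qquad (g,x)\mapsto T^gx,\]
is injective with image $Y$ satisfying $\mu(Y)>1-\eps$ (after the routine cleanup that replaces an almost-disjoint, almost-full family by an exactly disjoint one, absorbing the losses into $\eps$). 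I would then define $\R$ by declaring, for $x\in Y$ with its unique representation $x=T^gy$ ($y\in B_i$, $g\in F_i$), that $[x]_\R:=\{T^{g'}y:\ g'\in F_i\}$, and $[x]_\R:=\{x\}$ for $x\notin Y$. It is then routine that $\R$ is a Borel equivalence relation with $\R\subseteq\R_T$; that every $\R$-class is finite (a singleton or of size $|F_i|$); that every $\R$-class is connected, the preimage in $G$ of a nonsingleton class being a right translate of a connected $F_i$ and singletons being connected trivially; and that $\{x:\ [x]_\R\ \hbox{is $(\eps,r)$-F\o lner}\}\supseteq Y$, since for $x\in Y$ that preimage is a right translate of the $(\eps,r)$-F\o lner set $F_i$. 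As $\mu(Y)>1-\eps$, this exhibits $\R$ as $(\eps,r)$-Rokhlin.

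The only step carrying real content is the insertion of connectedness into the Ornstein--Weiss scheme, and that is exactly what the preceding lemma supplies at no cost, since the scheme constrains the shapes only through invariance. Everything else --- the Borel measurability of $\R$ and the cleanup from an approximate to an exact tiling --- is standard and I would treat it briefly.
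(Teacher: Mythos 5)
There is a genuine gap at the step where you assert that the Ornstein--Weiss machinery makes the map $\bigsqcup_i F_i\times B_i\to X$ \emph{injective}, i.e.\ that it tiles a set $Y$ of measure $>1-\eps$ by exact, pairwise disjoint translates $T^{F_i}y$. The classical Ornstein--Weiss Rokhlin lemma only produces an $\eps$-\emph{quasi}-tiling: the sets $T^{F_i}y$ are $\eps$-disjoint, not disjoint, and the "routine cleanup" you invoke consists of shrinking each tile to a subset of proportion $\geq 1-\eps$. After that cleanup the $\R$-classes are no longer translates of the $F_i$, and this destroys exactly the two properties your argument needs: a subset of a connected set need not be connected (and the definition of an $(\eps,r)$-Rokhlin subrelation requires \emph{every} class to be connected, not just most), and the $(\eps,r)$-F\o lner bound for the shrunken classes, or for their connected components if you split them, does not follow without a further argument controlling how much mass can end up in small or badly-shaped components. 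Upgrading to an honest exact tiling whose tiles are translates of prescribed F\o lner shapes is not a routine strengthening of Ornstein--Weiss at all --- for general amenable groups this is essentially the recent Downarowicz--Huczek--Zhang / Conley--Jackson--Kerr--Marks--Seward--Tucker-Drob circle of results, and even there the tile shapes come out of the construction rather than being prescribed, so you cannot simply feed in the connected F\o lner sets from the preceding lemma. (For $G=\bbZ^d$ with rectangular tiles your scheme can be carried out, but the lemma is needed for arbitrary finitely generated amenable $G$.)

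For comparison, the paper's proof avoids tilings altogether: by Connes--Feldman--Weiss, $\R_T=\bigcup_n\R_n$ for an increasing sequence of Borel subrelations with finite classes; replacing each $\R_n$ by the subrelation $\R_n'$ of its connected components keeps all classes finite and connected \emph{by construction}, and one checks $\bigcup_n\R_n'=\R_T$ by moving along $1$-paths in $G$; finally an averaging estimate, $\int |T^{B_G(r)}[x]_{\R_n'}\setminus[x]_{\R_n'}|/|[x]_{\R_n'}|\,\mu(dx)\leq\sum_{g\in B_G(r)}\mu\{x:(x,T^gx)\notin\R_n'\}\to 0$, plus Chebyshev shows that for large $n$ most classes are $(\eps,r)$-F\o lner. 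If you want to salvage your approach, you would need either an exact-tiling theorem with connected F\o lner shapes, or a separate argument that after disjointification and passage to connected components the non-F\o lner classes carry small measure; neither is supplied by what you have written.
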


\begin{proof}
According to one of the key results of~\cite{ConFelWei81}, $\R_T$ may be written as $\bigcup_{n\geq 1}\R_n$ for some increasing sequence $\R_1 \subseteq \R_2 \subseteq \cdots \subseteq$ of Borel equivalence relations with finite classes.

For each $i$, define $\R'_i \subseteq \R_i$ by
\[\R_i' = \big\{(x,y)\in \R_i:\,x\ \hbox{and}\ y\ \hbox{lie in the same connected component of}\ [x]_{\R_i}\big\}.\]
These $\R'_i$'s are Borel equivalence relations for which every class $[x]_{\R_i'}$ is finite and connected.  Also, their union is still equal to $\R_T$.  To see this, let $x \in X$ and $g \in G$.  There is a finite $1$-path
\[e= g_0,g_1,\ldots,g_k = g\]
in $G$. Since $\R_T = \bigcup_{n\geq 1} \R_n$, for each $i=0,1,\ldots,k-1$ we have
\[(T^{g_i}x,T^{g_{i+1}}x) \in \R_n \quad \hbox{for all sufficiently large $n$},\]
and therefore in fact
\[(T^{g_i}x,T^{g_{i+1}}x) \in \R'_n \quad \hbox{for all sufficiently large $n$},\]
since $d_G(g_i,g_{i+1}) = 1$ for each $i$. Hence, by transitivity, $(x,T^gx) \in \R'_n$ for all sufficiently large $n$.

Finally,
\begin{align*}
\int \frac{|T^{B_G(r)}([x]_{\R_n'})\setminus [x]_{\R_n'}|}{|[x]_{\R_n'}|}\,\mu(dx) &\leq \sum_{g \in B_G(r)}\int \frac{|T^g([x]_{\R_n'})\setminus [x]_{\R_n'}|}{|[x]_{\R_n'}|}\,\mu(dx)\\
&= \sum_{g \in B_G(r)}\int \frac{|\{y \in [x]_{\R_n'}:\ T^gy \not\in [x]_{\R_n'}\}|}{|[x]_{\R_n'}|}\,\mu(dx)\\
&= \sum_{g \in B_G(r)}\mu\{x:\,(x,T^gx)\not\in \R_n'\}.
\end{align*}
This tends to $0$ as $n\to\infty$ because $\R_T = \bigcup_{n\geq 1}\R_n'$.  By Chebychev's inequality, this implies that
\[\mu\{x:\,[x]_{\R_n'}\ \hbox{is $(\eps,r)$-F\o lner}\} > 1-\eps\]
for all sufficiently large $n$.
\end{proof}

\subsection{Existence of low-cost graphings}

We are ready to prove Proposition~\ref{prop:lowcost}. The required $T$-graphing will be built as a union of a sequence of $T$-graphings given by the following lemma.  Given two measurable subsets $U,V \subseteq X$, we say that $V$ is $(T,r)$-\textbf{dense} in $U$ if
\[T^{B_G(r)}V \supseteq U.\]

\begin{lem}\label{lem:lowcost}
If $G$ is an amenable group of super-linear growth, then there is a constant $c < \infty$ with the following property. Let $(X,\mu,T)$ be a free $G$-system and let $U\subseteq X$ have positive measure.  Let $0 < \eps < \mu(U)$ and $r < \infty$.  If $\R\subseteq \R_T$ is an $(\eps,r)$-Rokhlin subrelation, then there is a $T$-graphing $\G = (A_g)_g$ with the following properties:
\begin{itemize}
\item[i)] $V := \Vert(\G) \subseteq U$,
\item[ii)] $\C_{|\cdot|_G}(\G) \leq c/r$,
\item[iii)] $\R \cap (V\times V)= \R_\G$,
\item [iv)] $V$ is $(T,4r)$-dense in the set
\[U \cap \{x:\,[x]_\R\ \hbox{is $(1,r)$-F\o lner}\}\]
(in particular, this implies that $\mu(V) > 0$).
\end{itemize}
\end{lem}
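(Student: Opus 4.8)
The plan is to decompose $\R$ into its finite classes and, on each class which is $(1,r)$-F\o lner, to install a connected graph obtained by pulling the skeleton construction of Corollary~\ref{cor:Fol-skel} back to the orbit. First I would set up the class-wise data in a Borel way. Since $\R$ has finite classes it admits a Borel transversal $t\colon X\to X$ selecting one point in each $\R$-class. By freeness each class $[x]_\R$ is carried bijectively onto the finite set $G_x:=\{g\in G:\ T^g t(x)\in [x]_\R\}\subseteq G$, which contains $e_G$, depends only on the class, and varies measurably with $x$; similarly one has the measurable set $A_x:=\{g\in G_x:\ T^g t(x)\in U\}\subseteq G_x$, and for each $x$ a unique $g_x\in G_x$ with $T^{g_x}t(x)=x$. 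Because $G$ has only countably many finite subsets, I may, for each pair $(A,F)$ with $F\subseteq G$ finite, connected and $(1,r)$-F\o lner and $\emptyset\neq A\subseteq F$, fix once and for all, using Corollary~\ref{cor:Fol-skel} inside $G$, a $(4r)$-skeleton $(V(A,F),E(A,F))$ of $A$ with $V(A,F)\subseteq A$ and
\[\rm{wt}_{d_G}(V(A,F),E(A,F))\leq c_0|F|/r,\]
where $c_0$ is the constant of that corollary. I then define $\G=(A_g)_g$ by declaring, for those $x$ for which $G_x$ is connected and $(1,r)$-F\o lner and $A_x\neq\emptyset$, that $x\in A_g$ iff $g_x\in V(A_x,G_x)$ and the unordered pair $\{g_x,\,gg_x\}$ is an edge of $E(A_x,G_x)$; no other $x$ lies in any $A_g$. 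Each $A_g$ is then Borel. Since the right-invariant word metric $d_G$ descends to a well-defined metric on each $T$-orbit and the defining condition is symmetric in the two endpoints of an edge, $\G$ satisfies $A_{g^{-1}}=T^gA_g$, so it is a $T$-graphing; on each relevant class $C$ it reproduces exactly the graph $(V(A_C,G_C),E(A_C,G_C))$ transported to the orbit by $g\mapsto T^g t(x)$.

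Granting this, three of the four conclusions follow directly. For (i), $x\in\Vert(\G)$ forces $g_x\in V(A_x,G_x)\subseteq A_x$, i.e. $x=T^{g_x}t(x)\in U$. For (iii): every edge of $\G$ joins two points of a single $\R$-class, so $\R_\G\subseteq \R\cap(\Vert(\G)\times\Vert(\G))$; conversely, on each relevant class the installed graph is connected by the definition of a skeleton, so any two $\Vert(\G)$-points of a common class are joined by a $\G$-path, giving the reverse inclusion. For (iv): if $z\in U$ and $[z]_\R$ is $(1,r)$-F\o lner, then $g_z\in A_z\neq\emptyset$, and since $V(A_z,G_z)$ is $(4r)$-dense in $A_z$ there is $h\in V(A_z,G_z)$ with $d_G(g_z,h)\leq 4r$; then $v:=T^h t(z)\in\Vert(\G)$ and $z=T^{g_zh^{-1}}v$ with $g_zh^{-1}\in B_G(4r)$ by right-invariance, so $z\in T^{B_G(4r)}\Vert(\G)$. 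Running this last argument over $U\cap\{x:\ [x]_\R\ \hbox{is $(\eps,r)$-F\o lner}\}$, whose measure is positive because $\mu(U)>\eps$ while the Rokhlin hypothesis bounds the complementary part below $\eps$, also shows $\mu(\Vert(\G))\geq |B_G(4r)|^{-1}\mu(U\cap\{x:\ [x]_\R\ \hbox{is $(\eps,r)$-F\o lner}\})>0$.

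The one estimate requiring real care is the cost bound (ii), and this is where the weighting by $|\cdot|_G$ in Definition~\ref{dfn:cost} pays against the per-class bounds of Corollary~\ref{cor:Fol-skel}. Put $f(x):=\sum_{g:\ x\in A_g}|g|_G$, so that, interchanging sum and integral, $\C_{|\cdot|_G}(\G)=\int f\,d\mu$. If $x$ lies in a relevant class $C$, then $f(x)$ is exactly the weighted degree of the vertex $g_x$ in the skeleton $(V(A_C,G_C),E(A_C,G_C))$ — here I use that, by right-invariance, $|g|_G$ equals the $d_G$-length of the edge $\{g_x,gg_x\}$ — so summing over the class gives
\[\sum_{y\in C}f(y)=2\,\rm{wt}_{d_G}(V(A_C,G_C),E(A_C,G_C))\leq 2c_0|C|/r,\]
using $|G_C|=|C|$. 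Hence the $\R$-invariant function $\ol{f}(x):=|[x]_\R|^{-1}\sum_{y\in[x]_\R}f(y)$ satisfies $\ol{f}\leq 2c_0/r$ pointwise. Since $\R\subseteq\R_T$ and every $T^g$ preserves $\mu$, the measure $\mu$ is $\R$-invariant, so averaging over $\R$-classes preserves integrals and $\C_{|\cdot|_G}(\G)=\int f\,d\mu=\int\ol{f}\,d\mu\leq 2c_0/r$; taking $c:=2c_0$ gives (ii).

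The main obstacle is thus bookkeeping rather than a new idea: one must carry out the class-wise skeleton construction Borel-measurably (handled here by enumerating the countably many possible "shapes" $(A,F)$ of a class and fixing a skeleton for each), and then convert the weight bound of Corollary~\ref{cor:Fol-skel}, which is stated per class, into the integral bound on the $|\cdot|_G$-cost via the class-averaging identity above. The hypothesis of super-linear growth enters only through Corollary~\ref{cor:Fol-skel}; everything else is elementary and does not even require ergodicity.
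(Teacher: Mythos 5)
Your construction is essentially the paper's: choose a transversal, apply Corollary~\ref{cor:Fol-skel} class by class to $A_C=\{g:\ T^g t\in U\cap C\}$ inside the $(1,r)$-F\o lner shape of the class, transport the skeleton to the orbit, and read off (i), (iii), (iv). Your cost bookkeeping is a legitimate variant: the paper integrates the per-class weight directly over the transversal, using $\int_Y|[y]_\R|\,d\mu=1$, whereas you sum weighted vertex degrees and use the class-averaging (mass-transport) identity $\int f\,d\mu=\int\ol{f}\,d\mu$, which is valid here because $\mu$ is $\R_T$-invariant and $\R\subseteq\R_T$ has finite classes; you only pay a factor $2$ in the constant, which is harmless.

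There is, however, one genuine defect. You put $x$ into some $A_g$ only when $g_x$ is an \emph{endpoint of an edge} of the skeleton, and nothing is ever placed in $A_{e_G}$ (the pair $\{g_x,g_x\}$ is never an edge). Hence $\Vert(\G)$ consists only of non-isolated skeleton vertices. But a $(4r)$-skeleton can consist of a single vertex and no edges: this happens whenever $U$ meets a class in a single point, and more generally whenever the construction in Lemma~\ref{lem:subset-skeleton} returns $|W|=1$; a one-vertex graph is connected, so Corollary~\ref{cor:Fol-skel} is entitled to produce it. For such a class your $\Vert(\G)$ contains no point at all, and the step ``then $v:=T^h t(z)\in\Vert(\G)$'' in your proof of (iv) fails, since there is no reason for $z$ to lie within $T^{B_G(4r)}$ of vertices belonging to \emph{other} classes; your lower bound on $\mu(\Vert(\G))$ collapses with it. The repair is exactly what the paper does: define $A_{e_G}$ to be the set of all transported skeleton vertices. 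This is permitted (the symmetry condition~(\ref{eq:symmetry}) is vacuous for $e_G$), it contributes nothing to $\C_{|\cdot|_G}(\G)$ because $|e_G|_G=0$, and with it $\Vert(\G)$ contains every skeleton vertex, so your arguments for (i), (iii), (iv) and the positivity of $\mu(V)$ go through unchanged.
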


\begin{proof}
Let $c$ be the constant from Corollary~\ref{cor:Fol-skel}. Suppose that $\R$ is an $(\eps,r)$-Rokhlin subrelation, and let
\[ X_0 := \{x:\,[x]_\R\ \hbox{is $(1,r)$-F\o lner}\},\]
so $X_0$ is a union of $\R$-classes and $\mu(X_0) > 1 - \eps$.

Since all classes in $\R$ are finite, it has a transversal $Y \subseteq X$: that is, $Y$ is measurable and contains a unique element from each class of $\R$ (see, for instance,~\cite[Example 6.1]{KecMil04}). For each $x \in X_0$ let us write $\ol{x}$ for the unique element of $[x]_\R\cap Y$.

For $y \in Y\cap X_0$, let
\[B_y := \{g:\, T^gy \in U \cap [y]_\R\} \subseteq G.\]
Since $y \in X_0$ and $B_y$ is contained in $\{g:\ T^gy \in [y]_\R\}$, Corollary~\ref{cor:Fol-skel} gives a $(4r)$-skeleton $(W^0_y,E^0_y)$ for $B_y$ satisfying
\[\rm{wt}_{d_G}(W^0_y,E^0_y) \leq c|[y]_\R|/r.\]
Clearly $W^0_y$ and $E^0_y$ may be chosen measurably in $y$.

Now transport these skeleta from $G$ back to $X$ by setting
\[W_y:= T^{W^0_y}(y) \quad \hbox{and} \quad E_y := \big\{\{T^hy,T^gy\}:\ \{h,g\} \in E^0_y\big\} \quad \hbox{for}\ y \in Y\cap X_0.\]
The result is a graph $(W_y,E_y)$ on a subset of each class $[y]_\R \subseteq X_0$.

Finally, define the $T$-graphing $\G = (A_g)_g$ by setting
\begin{multline*}
A_{e_G} := \{x \in X_0:\ x \in W_{\ol{x}}\}\\ \hbox{and} \quad A_g := \{x \in X_0: x \in W_{\ol{x}}\ \hbox{and}\ \{x,T^gx\} \in E_{\ol{x}}\} \quad \hbox{for}\ g \in G\setminus \{e_G\}.
\end{multline*}
This is symmetric: if $x \in A_g$ and we set $x' := T^gx$, then $\{x,T^gx\} = \{x',T^{g^{-1}}x'\}\in E_{\ol{x}}$ and so also $x' \in W_{\ol{x}}$ and $x' \in A_{g^{-1}}$.

It remains to verify the four required properties.

\begin{itemize}
\item[i)] For each $x \in X_0$ we have
\[W_{\ol{x}} = T^{W^0_{\ol{x}}}(\ol{x}) \subseteq T^{B_{\ol{x}}}(\ol{x}) \subseteq U,\]
by the definition of $B_{\ol{x}}$. Hence $A_g \subseteq U$ for each $g$.
\item[ii)] To estimate the cost, first observe that we may write
\[A_g = \bigcup_{h \in G}\big\{T^hy:\ y \in Y\cap X_0\ \hbox{and}\ \{h,gh\} \in E^0_y\big\}.\]
This is a disjoint union: if $T^hy = T^{h'}y'$ among the points allowed above, then this point lies in $W_y \subseteq [y]_\R$ by the definition of $E_y^0$, and this implies that $h = h'$ and $y = y'$ because $T$ is free and $Y$ contains a unique element in each class of $\R$. Therefore
\begin{align*}
\C_{|\cdot|_G}(\G) &= \sum_{g\in G}|g|_G\cdot \mu(A_g)\\
&= \sum_{g\in G}\sum_{h\in G}|g|_G\cdot \mu\{y \in Y\cap X_0:\,\{h,gh\} \in E^0_y\}\\
&= \int_{Y\cap X_0} \sum_{\{h,gh\} \in E_y^0} d_G(h,gh)\ \mu(d y)\\
&= \int_{Y\cap X_0}\rm{wt}_{d_G}(W^0_y,E^0_y)\,\mu(dy)\\ &\leq \frac{c}{r} \int_{Y\cap X_0}|[y]_\R|\ \mu(dy) \leq \frac{c}{r}\int_Y|[y]_\R|\ \mu(dy) = \frac{c}{r}.
\end{align*}
\item[iii)]   Observe that
\[V = \bigcup_g A_g = \bigcup_{y \in Y\cap X_0} W_y.\]
Therefore
\[\R \cap(V\times V) = \bigcup_{y \in Y\cap X_0}W_y\times W_y,\]
and this equals $\R_\G$ because all the graphs $(W_y,E_y)$ are connected.

\item[iv)] Lastly, if $x \in U \cap X_0$, then $B_{\ol{x}}$ is nonempty, and then $W^0_{\ol{x}}$ is $(4r)$-dense in $B_{\ol{x}}$ by construction.  This implies that $V$ is $(T,4r)$-dense in $U\cap X_0$.
\end{itemize}
\end{proof}

\begin{proof}[Proof of Proposition~\ref{prop:lowcost}]
Let $c$ be the constant from Lemma~\ref{lem:lowcost}, and choose $m \in \bbN$ so that $2^{-m} < \eps/2c$.  Also, shrink $U$ if necessary so that $0 < \mu(U) < \eps$.

\vspace{7pt}

\emph{Step 1.}\quad For each $n \geq m$, let $\R_n \subseteq \R_T$ be a $(2^{-n},2^n)$-Rokhlin subrelation which also has the property that the set
\[C_n := \big\{x:\,[x]_{\R_n}\ \hbox{is $(1,2^n)$-F\o lner and}\ T^{B_G(2^{n+3})}x\subseteq [x]_{\R_n}\big\}\]
satisfies $\mu(C_n) > 1 - 2^{-n-1}\mu(U)$.

Now let
\[W := U\cap \bigcap_{n\geq m} C_n,\]
so $\mu(W) > \mu(U)/2$.

\vspace{7pt}

\emph{Step 2.}\quad Applying Lemma~\ref{lem:lowcost}, let $\G_m = (A_{m,g})_g$ be a $T$-graphing such that $V := \Vert(\G_m) \subseteq W$ has $\mu(V) > 0$,
\[\C_{|\cdot|_G}(\G_m) \leq c/2^m,\]
and
\[\R_m \cap (V \times V) = \R_{\G_m}.\]

\vspace{7pt}

\emph{Step 3.}\quad For each $n\geq m+1$ now apply Lemma~\ref{lem:lowcost} again to obtain a $T$-graphing $\G_n = (A_{n,g})_g$ such that $V_n \subseteq V$,
\[\C_{|\cdot|_G}(\G_n) < c/2^n,\]
\[\R_n\cap (V_n\times V_n) = \R_{\G_n},\]
and $V_n$ is $(T,2^{n+2})$-dense in $V$. The last conclusion can be obtained from part (iv) of Lemma~\ref{lem:lowcost} because $V$ is contained in $W$ and $W$ is already contained in $C_n$ by construction. Observe that the choices of $\G_n$ for $n > m$ depend on the choice of $\G_m$ in Step 2, but not on each other.

\vspace{7pt}

\emph{Step 4.}\quad After finishing this recursion, define $\G = (A_g)_g$ by
\[A_g:= \bigcup_{n\geq m}A_{n,g} \quad \hbox{for each}\ g \in G.\]
We will show that this has the desired properties.  The symmetry property~(\ref{eq:symmetry}) holds for $\G$ because it holds for each $\G_n$. The vertex set of $\G$ is equal to $V \subseteq U$ because $\Vert(\G_m) = V$ and then $\Vert(\G_n) \subseteq V$ for every $n\geq m+1$.  This implies that $\mu(\rm{Vert}(\G)) < \eps$, because $\mu(U)  < \eps$.  Also,
\[\C_{|\cdot|_G}(\G) \leq \sum_{n\geq m}\C_{|\cdot|_G}(\G_n) < \sum_{n\geq m}c2^{-n} = c2^{-m+1} < \eps.\]

It remains to show that $\G$ is orbit-wise connected: that is, that
\[\R_\G = \R_T\cap (V\times V).\]
Consider a pair of distinct points $x,T^gx \in V$.  Choose the least $n\geq m$ which satisfies
\[2^{n+3} > |g|_G.\]
We will prove that $(x,T^gx) \in \R_\G$ by induction on this $n$.

If $n=m$, then $|g|_G < 2^{m + 3}$, and so certainly
\[T^{B_G(2^{m+3})}x \cap T^{B_G(2^{m+3})}(T^gx) \neq \emptyset.\]
Since also $x,T^gx \in C_m$, the definition of $C_m$ now requires that $[x]_{\R_m} = [T^gx]_{\R_m}$.  Since $\R_m\cap (V\times V) = \R_{\G_m}$, it follows that $(x,T^gx) \in \R_{\G_m} \subseteq \R_\G$.

Now suppose that $n\geq m+1$.  Since $V_n$ is $(T,2^{n+2})$-dense in $V$, there are $h,k \in G$ such that $T^hx,T^kx \in V_n$ and
\[d_G(e_G,h),d_G(g,k) < 2^{n+2}.\]
By the inductive hypothesis, this implies that $(x,T^hx),(T^gx,T^kx) \in \R_\G$.  On the other hand, the triangle inequality now gives $d_G(h,k) < 2^{n+4}$, and so
\[T^{B_G(2^{n+3})}(T^hx)\cap T^{B_G(2^{n+3})}(T^kx) \neq \emptyset.\]
Arguing as above, this implies that $(T^hx,T^kx) \in \R_n$, and since these points are also in $V_n$ this implies that
\[(T^hx,T^kx) \in \R_n\cap (V_n\times V_n) = \R_{\G_n} \subseteq \R_\G.\]
\end{proof}

\begin{rmk}
It is easy to see that the above conclusion fails if $G = \bbZ$, so the assumption of super-linear growth is important. \fin
\end{rmk}

\section{Proof of the derandomization results}\label{sec:BandC}

\begin{prop}\label{prop:derandomize}
Let $(\s,U)$ be a partial cocycle for which at least one of the following holds:
\begin{itemize}
\item[i)] $\s$ extends to an integrable cocycle $G\times X\to H$;
\item[ii)] $(\s,U)$ is bounded.
\end{itemize}

Then for every $\eps > 0$ there is a $\delta > 0$ such that the following holds.  If $\G = (A_g)_{g\in G}$ is an orbit-wise connected $T$-graphing with $\rm{Vert}(\G) \subseteq U$, and both $\mu(A_{e_G}) < \delta$ and $\C_{|\cdot|_G}(\G) < \delta$, then $\rmh(\mu,T,\s|\G) < \eps$.
\end{prop}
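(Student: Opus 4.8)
I would bound $\rmh(\mu,T,\s|\G)$ by splitting the factor generated by $\s|\G$ into the contribution of the ``shape'' of the graphing and that of the cocycle labels, and control each using the cost $\C_{|\cdot|_G}(\G)$ and the smallness of $\mu(A_{e_G})$. Two facts will be used constantly: that $A_g\subseteq U\cap T^{g^{-1}}U$ for every $g$ (noted just after the definition of $\a|\G$); and that $|e_G|_G=0$, so that $\sum_{g\neq e_G}\mu(A_g)\le\sum_{g\neq e_G}|g|_G\mu(A_g)=\C_{|\cdot|_G}(\G)$, whence the hypotheses already force $\mu(\Vert(\G))$, $\rmH_\mu(A_{e_G})$ and $\C_{|\cdot|_G}(\G)$ to be $O(\delta)$.

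\emph{Case (ii).} I would argue directly from Lemma~\ref{lem:Shannon-bound2}. Since $|\s(g,x)|_H\le C|g|_G$ for a.e. $x\in A_g$ and $H$ is finitely generated, $\s^g$ takes at most $|B_H(e_H,C|g|_G)|\le c_H^{C|g|_G}$ values on $A_g$ for a suitable constant $c_H$, so $\rmH_{\mu_{|A_g}}(\s^g)\le(C\log c_H)|g|_G$, and therefore $\sum_g\mu(A_g)\rmH_{\mu_{|A_g}}(\s^g)\le(C\log c_H)\C_{|\cdot|_G}(\G)$. For the other term of Lemma~\ref{lem:Shannon-bound2}, Lemma~\ref{lem:Fur} applied to the values $p_g:=\mu(A_g)$ ($g\neq e_G$) gives $\sum_{g\neq e_G}\rmH_\mu(A_g)\le C_{\eps/4}\C_{|\cdot|_G}(\G)+\eps/4$, while the $g=e_G$ term is $\rmH_\mu(A_{e_G})$. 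Hence $\rmh(\mu,T,\s|\G)\le\rmH_\mu(A_{e_G})+(C_{\eps/4}+C\log c_H)\C_{|\cdot|_G}(\G)+\eps/4$, and one takes $\delta$ small enough that $\rmH_\mu(A_{e_G})<\eps/4$ and $(C_{\eps/4}+C\log c_H)\delta<\eps/2$.

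\emph{Case (i).} Here the per-edge estimate fails: a single value $\s(g,\cdot)$ can be unbounded on the tiny set $A_g$, so $\rmH_{\mu_{|A_g}}(\s^g)$ need not be $O(|g|_G)$. The point is to reorganise the labels through the \emph{finitely many generators}. Fix once and for all a geodesic word $g=s_{|g|_G}\cdots s_1$ (letters in $B_G(e_G,1)$) for each $g\in G$, and for each generator $s$ set
\[C_s:=\bigcup\big\{T^{s_{i-1}\cdots s_1}A_g:\ g\in G,\ 1\le i\le|g|_G,\ s_i=s\big\}.\]
By the cocycle identity~(\ref{eq:cocyc-prod}) for the integrable \emph{full} cocycle $\s$ supplied by hypothesis (i), for $x\in A_g$ one has $\s(g,x)=\prod_i\s(s_i,T^{s_{i-1}\cdots s_1}x)$, and each point $T^{s_{i-1}\cdots s_1}x$ lies in $C_{s_i}$; consequently every set $U_{g,h}=\{x\in A_g:\s(g,x)=h\}$, and hence its $T$-images, are measurable with respect to the factor generated by $\G$ joined with the factors generated by the partial observables $(\s^s|C_s,C_s)$, $s\in B_G(e_G,1)$. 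By monotonicity and subadditivity of Kolmogorov--Sinai entropy over joins, together with~(\ref{eq:Shannon-bound}),
\[\rmh(\mu,T,\s|\G)\ \le\ \rmh(\mu,T,\G)+\sum_{s\in B_G(e_G,1)}\rmH_\mu(\s^s;C_s).\]
Since the union defining $C_s$ is over sets of measure $\mu(A_g)$, one gets $\sum_s\mu(C_s)\le\sum_g\sum_{i=1}^{|g|_G}\mu(A_g)=\C_{|\cdot|_G}(\G)<\delta$, so in particular each $\mu(C_s)<\delta$. Lemma~\ref{lem:bound-by-cost} makes $\rmh(\mu,T,\G)\le\rmH_\mu(A_{e_G})+C_{\eps/6}\C_{|\cdot|_G}(\G)+\eps/6<\eps/2$ once $\delta$ is small; and because there are only finitely many $s$ and $\s$ is a genuine integrable cocycle, Corollary~\ref{cor:cocyc-part-ent} yields a single $\delta_0>0$ with $\mu(B)<\delta_0\Rightarrow\rmH_\mu(\s^s;B)<\eps/(2|B_G(e_G,1)|)$ for all $s$, so that $\sum_s\rmH_\mu(\s^s;C_s)<\eps/2$ provided $\delta\le\delta_0$. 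Adding gives $\rmh(\mu,T,\s|\G)<\eps$.

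\emph{The main obstacle.} The bookkeeping with $\C_{|\cdot|_G}$ and Lemma~\ref{lem:Fur} is routine; the genuine difficulty is case (i), where one must avoid estimating $\s|\G$ edge by edge — there the cocycle values on the small sets $A_g$ are uncontrolled — and instead see that all of the randomness the cocycle contributes along the graphing is already carried by the restrictions $\s^s|C_s$ of the finitely many generator-cocycles, whose total measure is bounded by the cost, after which Corollary~\ref{cor:cocyc-part-ent} applies uniformly. The step needing the most care is the measurability claim: that each $U_{g,h}$ lies in the factor generated by $\G$ together with the $(\s^s|C_s,C_s)$, and hence that this join's entropy controls $\rmh(\mu,T,\s|\G)$.
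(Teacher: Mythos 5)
Your proposal is correct and follows essentially the same route as the paper: case (ii) is the direct Shannon/cost estimate via Lemma~\ref{lem:Shannon-bound2}, Lemma~\ref{lem:Fur} and the bound $\log|B_H(C|g|_G)|=O(|g|_G)$, and in case (i) your sets $C_s$ are exactly the paper's nearest-neighbour graphing $\Theta=(B_s)_s$, with the same cocycle-identity measurability step, the same cost comparison $\sum_s\mu(C_s)\le\C_{|\cdot|_G}(\G)$, and the same appeal to Lemma~\ref{lem:bound-by-cost} and Corollary~\ref{cor:cocyc-part-ent}. The only cosmetic difference is that you pass straight to the Shannon bound~(\ref{eq:Shannon-bound}) for the generator observables instead of quoting Lemma~\ref{lem:Shannon-bound2} for $\Theta$, which amounts to the same estimate.
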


\begin{proof}
\emph{Case (i).}\quad Denote the extended cocycle $G\times X\to H$ also by $\s$.  From $\G$ we define a nearest-neighbour $T$-graphing $\Theta = (B_s)_{s \in B_G(1)}$ as follows.  For each $g \in G$, choose a word of length $|g|_G$ in the alphabet $B_G(1)$ which evaluates to $g$, say
\[g = s_{g,n}s_{g,n-1}\cdots s_{g,1} \quad \hbox{where}\ n := |g|_G.\]
Now define
\[B_{g,i} := T^{s_{g,i-1}\cdots s_{g,1}}A_g \quad \hbox{for}\ i = 1,2,\ldots,n,\]
and finally let
\[B_s := \bigcup_{g \in G}\ \ \bigcup_{1 \leq i\leq |g|
_G\ \rm{s.t.}\ s_{g,i} = s}B_{g,i} \quad \hbox{for each}\ s \in B_G(1).\]

This construction has the following two important features.
\begin{itemize}
 \item[(a)] The new cost is bounded by the old cost:
\begin{multline*}
\C_{|\cdot|_G}(\Theta) = \sum_{s \in B_G(1)}\mu(B_s) \leq \sum_{g\in G}\sum_{i =1}^{|g|_G}\mu(B_{g,i})\\ = \sum_{g\in G}\sum_{i=1}^{|g|_G}\mu(A_g) = \sum_{g\in G}|g|_G\cdot \mu(A_g) = \C_{|\cdot|_G}(\G) < \delta.
\end{multline*}
\item[(b)] Given the collection of sets $A_g$ for $g \in G$ and also the collection of partial functions $\s(s,\cdot)|B_s$ for $s \in B_G(1)$, they determine all the partial functions $\s(g,\cdot)|A_g$ using the cocycle identity:
\[\s(g,x) = \s(s_{g,n},T^{s_{g,n-1}\cdots s_{g,1}}x)\cdots \s(s_{g,1},x).\]
In this formula, if $x \in A_g$, then
\[x \in B_{g,1} \subseteq B_{s_{g,1}},\quad T^{s_{g,1}}x \in B_{g,2} \subseteq B_{s_{g,2}},\quad \dots,\quad T^{s_{g,n-1}\cdots s_{g,1}}x \in  B_{s_{g,n-1}}.\]
Therefore the factor generated by $\s|\G$ is contained in the factor generated by $\G$ and $\s|\Theta$ together.
\end{itemize}

By property (b) above, we have
\[\rmh(\mu,T,\s|\G) \leq \rmh(\mu,T,\G) + \rmh(\mu,T,\s|\Theta)\]
If $\delta$ is sufficiently small, then the first of these terms is at most $\eps/2$ by Lemma~\ref{lem:bound-by-cost}.  On the other hand, Lemma~\ref{lem:Shannon-bound2} gives
\[\rmh(\mu,T,\s|\Theta) \leq \sum_{s \in B_G(1)}\rmH_\mu(\s(s,\,\cdot\,);B_s).\]
By property (a) above, if $\delta$ is small enough, then we may apply Corollary~\ref{cor:cocyc-part-ent} to each summand on the right.  This completes the proof in case (i).

\vspace{7pt}

\emph{Case (ii).}\quad In this case we can use Lemma~\ref{lem:Shannon-bound2} more directly:
\begin{equation}\label{eq:h-est}
\rmh(\mu,T,\s|\G) \leq \sum_{g\in G}\rmH_\mu(A_g) + \sum_{g\in G}\mu(A_g)\rmH_{\mu_{|A_g}}(\s(g,\,\cdot\,)).
\end{equation}
Since we are in case (ii), there is a finite constant $C$ such that for each $g$ the random variable $\s(g,\,\cdot\,)$ takes values in $B_H(C|g|_G)$ almost surely.  Since $H$ is finitely generated, we have $\log|B_H(r)| = O(r)$ for all $r$, and hence
\[\rmH_{\mu|_{A_g}}(\s(g,\,\cdot\,)) \leq \log |B_H(C|g|_G)| = O(C|g|_G) = O(|g|_G).\]
Therefore the right-hand side of~(\ref{eq:h-est}) is bounded by a constant multiple of
\[\sum_{g\in G}\rmH_\mu(A_g) + \sum_{g\in G}|g|_G\cdot\mu(A_g).\]
By Lemma~\ref{lem:bound-by-cost}, the first term here may also be made arbitrarily small if $\mu(A_{e_G})$ and $\C_{|\cdot|_G}(\G)$ are sufficiently small. This completes the proof.
\end{proof}

\begin{proof}[Proof of Theorem C]
	Given $\eps > 0$, apply case (i) of Proposition~\ref{prop:derandomize} to the cocycle $\s$ with $U := X$.  We obtain a $\delta > 0$ for which the conclusion of that proposition holds. Now apply Proposition~\ref{prop:lowcost} to obtain a nontrivial orbit-wise connected $T$-graphing $\G$ such that $\rm{Vert}(\G) \subseteq U$, $\mu(\rm{Vert}(\G)) < \delta$ and $\cal{C}_{|\cdot|_G}(\G) < \delta$.  The second of these conditions implies that also $\mu(A_{e_G}) < \delta$.  Therefore, letting $\A$ be the factor generated by $\s|\G$, the choice of $\delta$ implies that $\rmh(\mu,T,\A) < \eps$.
	
	Let $V := \rm{Vert}(\G)$.  Since $\G$ is orbit-wise connected, Lemma~\ref{lem:cocyc-gen} tells us that the partial cocycle $(\s_{|V},V)$ is also $\A$-measurable. Now apply the first part of Proposition~\ref{prop:cocyc-extend} to the partial cocycle $(\s_{|V},V)$ and the factor system of $(X,\mu,T)$ generated by $\A$, which must still be ergodic.  That proposition gives an $\A$-measurable cocycle $\tau:G\times X\to H$ such that $\s_{|V} = \tau_{|V}$.  Since $\tau$ is $\A$-measurable, its entropy is also less than $\eps$, and by the second part of Proposition~\ref{prop:cocyc-extend} it is cohomologous to $\s$.
\end{proof}

\begin{proof}[Proof of Theorem D]
Fix $\eps > 0$.  Cases (i) and (ii) of Theorem D correspond to cases (i) and (ii) of Proposition~\ref{prop:derandomize}.  Therefore in either case there is some $\delta > 0$ for which the implication of that proposition holds. Having chosen this $\delta$, Proposition~\ref{prop:lowcost} gives a non-trivial orbit-wise connected $T$-graphing  $\G = (A_g)_{g \in G}$ such that
\[U := \rm{Vert}(\G) \subseteq \dom\Phi, \quad \mu(\rm{Vert}(\G)) < \delta \quad \hbox{and} \quad \C_{|\cdot|_G}(\G) < \delta.\]
By the choice of $\delta$ this implies that $\rmh(\mu,T,\a|\G) < \eps$.

Letting $\A$ be the factor generated by $(\a_{|U},U)$, it now follows by Lemma~\ref{lem:cocyc-gen} that $\rmh(\mu,T,\A) < \eps$.  By enlarging $\A$ slightly if necessary, we may assume in addition that it is generated by a factor map to another free $G$-system.  Finally Lemma~\ref{lem:pre-C} produces the remaining objects with the properties asserted in Theorem D.
\end{proof}

\section{Further questions}

Integrable measure equivalence was originally introduced in~\cite{BadFurSau13} for actions of lattices in isometry groups of hyperbolic spaces.  It would be interesting to know whether any classification of probability-preserving actions of such groups follows from the accompanying assumption of SSOE$_1$.  Since these groups are not amenable, the Kolmogorov--Sinai entropy is not available as an invariant.  However, recent years have seen important progress in our understanding of entropy-like invariants for non-amenable groups.

\begin{ques}
	If $G$ and $H$ are countable groups, does an SOE$_\infty$ or SSOE$_1$ between a $G$-action and an $H$-action imply a relation between their Rokhlin entropies~\cite{Seward--KriI,Seward--KriII}?
\end{ques}

\begin{ques}
	If $G$ and $H$ are sofic groups, can one choose sofic approximations to them in such a way that an SOE$_\infty$ or SSOE$_1$ between a $G$-action and an $H$-action imply a relation between their sofic entropies~\cite{Bowen10,KerLi11a}?  If $G = H$ is a free group, can one obtain a relation between f-invariants~\cite{Bowen10free}?
\end{ques}

\bibliographystyle{alpha}
\bibliography{bibfile}

\vspace{7pt}

\noindent\small{\textsc{Tim Austin}\\ \textsc{Courant Institute of Mathematical Sciences, New York University, 251 Mercer St, New York NY 10012, USA}

\vspace{7pt}

\noindent Email: \verb|tim@cims.nyu.edu|

\noindent URL: \verb|cims.nyu.edu/~tim|}

\end{document}